\documentclass[10pt,reqno]{amsart}
\usepackage{verbatim}
\usepackage{amsmath}
\usepackage{amssymb, amsfonts, euscript, enumerate,latexsym}
\usepackage{amsthm} 
\usepackage{pxfonts}
\usepackage{fancyhdr}
\usepackage{mathrsfs}
\usepackage{mathtools}
\usepackage{epsfig,subfigure}
\usepackage{graphicx}
\usepackage{color,wrapfig}
\usepackage{txfonts}
\usepackage{hyperref}

\newtheorem{theorem}{Theorem}[section]
\newtheorem{lem}[theorem]{Lemma}
\newtheorem{cor}[theorem]{Corollary}
\newtheorem{remark}[theorem]{Remark}
\newtheorem{prop}[theorem]{Proposition}

\theoremstyle{definition}

\newcommand{\re}{{\mathbb R}} 
\newcommand{\R}{{\mathbb R}}

\newcommand{\Z}{{\mathbb Z}}

\newcommand{\ld}{\lambda}

\newcommand{\ve}{\varepsilon}

\newcommand{\loc}{\operatorname{loc}}
\newcommand{\D}{\nabla}

\newcommand{\La}{\Delta}

\newcommand{\2}{\overline}
\newcommand{\3}{\varepsilon}
\newcommand{\4}{\widetilde}

\begin{document}

\title [Asymptotic behaviour of singular solution]{Asymptotic behaviour of singular solution of the fast diffusion equation in the punctured Euclidean space}

\author{Kin Ming Hui}
\address{Institute of Mathematics, 
Academia Sinica,
Taipei, Taiwan, R. O. C.}
\email{kmhui@gate.sinica.edu.tw}

\author{Jinwan Park}
\address{Research Institute of Mathematics,
 Seoul National University,
Seoul 08826, Korea}
\email{jinwann@snu.ac.kr}

\date{\today}
\thanks{
}

\subjclass[2010]{Primary: 35B40, 35B44, 35K55, 35K65}
\keywords{asymptotic behaviour of solutions, blow-up, fast diffusion equation, singular solution, $L^1$-contraction, radially symmetric self-similar solution}

\begin{abstract}
For $n\ge 3$, $0<m<\frac{n-2}{n}$, $\beta<0$ and $\alpha=\frac{2\beta}{1-m}$, we prove the existence, uniqueness and asymptotics near the origin of the singular eternal self-similar solutions of the fast diffusion equation in $(\mathbb{R}^n\setminus\{0\})\times \mathbb{R}$ 
of the form $U_{\lambda}(x,t)=e^{-\alpha t}f_{\lambda}(e^{-\beta t}x), x\in \mathbb{R}^n\setminus\{0\}, t\in\mathbb{R},$ where $f_{\lambda}$ is a radially symmetric function satisfying 
$$\frac{n-1}{m}\Delta f^m+\alpha f+\beta x\cdot\nabla f=0 \text{ in }\mathbb{R}^n\setminus\{0\},$$
with $\underset{\substack{r\to 0}}{\lim}\frac{r^2f(r)^{1-m}}{\log r^{-1}}=\frac{2(n-1)(n-2-nm)}{|\beta|(1-m)}$ and $\underset{\substack{r\to\infty}}{\lim}r^{\frac{n-2}{m}}f(r)=\lambda^{\frac{2}{1-m}-\frac{n-2}{m}}$, for some constant $\lambda>0$.

As a consequence we prove the existence and uniqueness of solutions of Cauchy problem for the fast diffusion equation $u_t=\frac{n-1}{m}\Delta u^m$ in $(\mathbb{R}^n\setminus\{0\})\times (0,\infty)$ with initial value $u_0$ satisfying $f_{\lambda_1}(x)\le u_0(x)\le f_{\lambda_2}(x)$, $\forall x\in\mathbb{R}^n\setminus\{0\}$, which satisfies $U_{\lambda_1}(x,t)\le u(x,t)\le U_{\lambda_2}(x,t)$, $\forall x\in \mathbb{R}^n\setminus\{0\}, t\ge 0$, for some constants $\lambda_1>\lambda_2>0$. 

We also prove the asymptotic behaviour of such singular solution $u$ of the fast diffusion equation as $t\to\infty$ when  $n=3,4$ and $\frac{n-2}{n+2}\le m<\frac{n-2}{n}$ holds. Asymptotic behaviour of such singular solution $u$ of the fast diffusion equation as $t\to\infty$  is also obtained when $3\le n<8$, $1-\sqrt{2/n}\le m<\min\left(\frac{2(n-2)}{3n},\frac{n-2}{n+2}\right)$,  and $u(x,t)$ is radially symmetric in $x\in\mathbb{R}^n\setminus\{0\}$ for any $t>0$ under appropriate conditions on the initial value $u_0$.
\end{abstract}

\maketitle 


\section{Introduction}
\setcounter{equation}{0}
\setcounter{theorem}{0}

Recently there is a lot of study on the equation 
\begin{equation}\label{fde}
u_t=\frac{n-1}{m} \Delta u^m, \quad u>0,
\end{equation}
in $\R^n\times (0,T)$, $T>0$, by D.G. Aronson \cite{A}, P. Daskalopoulos, J. King, M. del Pino, N. Sesum, M. S\'aez, \cite{DKS}, \cite{DPS}, \cite{DS1}, \cite{DS2}, \cite{PS}, S.Y. Hsu \cite{Hs1}, \cite{Hs2}, \cite{Hs3}, K.M. Hui \cite{H1}, \cite{H2}, \cite{H3}, M. Fila, M. Winkler, E. Yanagida, J.L. Vazquez \cite{FVWY}, \cite{FW1}, \cite{FW2}, \cite{FW3}, \cite{VW}, \cite{V1}, etc. We refer the readers to the survey paper \cite{A}
and the books \cite{DK}, \cite{V2} on the recent results of \eqref{fde}.

For $m>1$, \eqref{fde} arises in the flow of gases through porous media or oil passing through sand, etc., and it is called the porous medium equation. For $m=1$, \eqref{fde} is the heat equation. For $0 <m<1$, \eqref{fde} is called the fast diffusion equation. If $g=u^{\frac{4}{n+2}}dx^2$ is a metric on $\re^n$, $n \ge 3$, then $g$ satisfies the Yamabe flow, 
\begin{equation*}
\frac{\partial g}{\partial t} =-Rg\quad\mbox{ in } \R^n\times (0,T),
\end{equation*}
if and only if $u$ satisfies \eqref{fde} in $\R^n\times (0,T)$ with $m=\frac{n-2}{n+2}$. 

As observed by J.L.~Vazquez \cite{V1} and others there is a considerable difference in the behaviour of the solutions of \eqref{fde} for the cases $0<m<\frac{(n-2)_+}{n}$, $\frac{(n-2)_+}{n}<m<1$, and $m>1$. For example when $m>1$, if the initial value $0\le u_0\in L^1(\R^n)$  has compact support, then the solution $u$ of  
\begin{align}\label{Cauchy-problem}
\begin{cases}
u_t=\frac{n-1}{m}\Delta u^m& \quad \text{ in } \re^n  \times (0, T)\\
u(\cdot,0)=u_0&\quad \text{ in } \re^n
\end{cases}
\end{align}
will have compact support for any $0<t<T$ \cite{A}. On the other hand when $0<m<1$ and $0\le u_0\in L^1(\R^n)$  has compact support, M.A.~Herrero and M.~Pierre \cite{HP} proved that the solution $u$ of \eqref{Cauchy-problem} is positive on $\R^n\times (0,T)$. M.A.~Herrero and M.~Pierre \cite{HP} also proved that when $\frac{n-2}{n}<m<1$ and $0\le u_0\not\equiv 0$, there exists a unique global positive solution of \eqref{Cauchy-problem}. On the other hand when $0<m<\frac{(n-2)_+}{n}$, it is known that there exist solutions of \eqref{Cauchy-problem} which extinct in a finite time. For example the Barenblatt solution \cite{DS1},
\begin{equation*}
B_k(x,t)=(T-t)^{\frac{n}{n-2-nm}}\left(\frac{C^{\ast}}{k+(T-t)^{\frac{2}{n-2-nm}}|x|^2}\right)^{\frac{1}{1-m}},\quad k>0,
\end{equation*}
where 
\begin{equation*}
C^{\ast}=\frac{2(n-1)(n-2-nm)}{1-m}
\end{equation*}
is a positive solution of \eqref{Cauchy-problem} with $u_0=B_k(x,0)$ which vanishes identically at $t=T$.
 
There is a lot of research on \eqref{fde} for the case $0<m<\frac{n-2}{n}$ and $n\ge 3$ recently  by P. Daskalopoulos, J. King, M. del Pino, N. Sesum \cite{DKS}, \cite{DPS}, S.Y. Hsu \cite{Hs1}, \cite{Hs2}, \cite{Hs3}, K.M. Hui \cite{H1}, \cite{H2}, \cite{H3}, J.L. Vazquez \cite{V1}, etc. On the other hand various singular solutions of \eqref{fde} in the Euclidean space minus a finite number of points which blow up either at the origin or at a finite number of points for the case $0<m<\frac{n-2}{n}$, $n\ge 3$, were studied by K.M. Hui, Soojung Kim, Sunghoon Kim, T.~Jin and J.~Xiong, etc. \cite{HK}, \cite{HKs}, \cite{JX}. 

In this paper, for $0<m<\frac{n-2}{n}$ and $n\ge 3$, we study the existence and uniqueness of eternal self-similar solutions of \eqref{fde} which blow up at the origin for all time $t\in\R$ with specific growth rate at $0$. Moreover, we study the existence, uniqueness, and asymptotic large time behaviour of the singular solution to the Cauchy problem,
\begin{align}\label{main-eq}
\begin{cases}
u_t=\frac{n-1}{m}\Delta u^m& \quad \text{ in } (\re^n \setminus \{0\}) \times (0, \infty)\\
u(\cdot,0)=u_0&\quad \text{ in } \re^n \setminus \{0\},
\end{cases}
\end{align}
which lying between two singular self-similar solutions.

The main difficulty of the theory is to find appropriate \emph{weighted $L^1$-contraction}, the weighted $L^1$-norm of the difference of two solutions is nonincreasing with respect to the time variable. The property is mainly used in the proof of the uniqueness and asymptotic large time behaviour of the solution of \eqref{main-eq}.

In the study of the $L^1$-contraction, we found out that the growth rate of solutions at the origin is relatively high to have appropriate $L^1$-contraction for the uniqueness and large time asymptotic behaviour, if we take $|x|^{-\mu}$ ($0<\mu\le n- 2/(1-m)$) by the weight function and estimate the difference of two solutions by a self-similar solution as in \cite{HK}.

Thus, in Section \ref{section-Higher order asymptotics}, for the first step, we suggest a more accurate estimate on the difference of two self-similar solutions, Corollary \ref{dif f}, by using the higher order asymptocity of self-similar solutions, Theorem \ref{Thm HAB}. Then, we have $L^1$-contraction with weight function $|x|^{-\mu}$, Theorem \ref{u-v-weight-mu-L1-contraction} and have the uniqueness of the solution of \eqref{main-eq} lying between two singular self-similar solutions, Theorem \ref{thm exi uni u}.

For the asymptotic behaviour of the solution, we need more strong property on the weighted $L^1$-contraction that the weighted $L^1$-norm of the difference of two rescaled solutions vanishes as time goes to infinity. However, the property is not obtained by the $L^1$-contraction with weight function $|x|^{-\mu}$.

Therefore, for $n=3, 4$, $\frac{n-2}{n+2}\le m < \frac{n-2}{n}$, we introduce the $L^1$-contraction with a power of self-similar solution as the weight function, which implies the vanishing property and the asymptotic large time behaviour of the solution, Theorem \ref{u-v-L1-contraction2} and Theorem \ref{thm assm}. Furthermore, if the initial value of the solution is radially symmetric, then we have the asymptotic large time behaviour of the solution in higher dimension $3 \le n < 8$, $1- \sqrt{\frac{2}{n}} \le m \le \min \left \{\frac{2(n-2)}{3n}, \frac{n-2}{n+2}\right \}$, Theorem \ref{thm assm rad sym}.

\subsection{Contents and methodology}

In section \ref{section-eternal-self-similar-soln}, for $0<m<\frac{n-2}{n}$ and $n\ge 3$, we will study the existence and uniqueness of radially symmetric eternal self-similar solutions of \eqref{fde} in $(\R^n \setminus \{0\})\times \re$ of the form
\begin{equation*}
U(x,t):=e^{-\alpha t}f(e^{-\beta t}x)\quad \forall x\in \R^n \setminus \{0\},t\in \R
\end{equation*}
that blow up at $\{0\}\times\R$, where $f$ is a radially symmetric function satisfying
\begin{equation}\label{f-eqn}
\frac{n-1}{m}\Delta f^m+\alpha f+\beta x\cdot\nabla f=0, \quad f>0,\quad\mbox{ in }\R^n\setminus\{0\},
\end{equation}
or equivalently,
\begin{equation}\label{ode}
\frac{n-1}{m}\left((f^m)_{rr}+\frac{n-1}{r}(f^m)_r\right)+\alpha f+\beta rf_r=0,\quad f>0,\quad \forall r>0
\end{equation}
with
\begin{equation}\label{ab}
\beta<0 \quad \text{ and } \quad \alpha= \frac{2\beta}{1-m}
\end{equation}
and $f$  blows up at the origin, Theorem \ref{f-existence-uniqueness-thm}. Furthermore, we prove that such function $f$ satisfies
\begin{equation}\label{gro 0}
\lim_{r\to 0} \frac{r^{2}f(r)^{1-m}}{\log r^{-1}}=\frac{2(n-1)(n-2-nm)}{ (1-m)|\beta|}
\end{equation}
and
\begin{equation}\label{gro inf}
\lim_{r \to \infty} r^{\frac{n-2}{m}} f(r)=A 
\end{equation}
for some constant $A>0$, Theorem \ref{Thm HAB}.

We note that for $0<m<\frac{n-2}{n}$ and $n\ge 3$, forward self-similar solutions of \eqref{fde} which blow up at the origin for  $t>0$ are constructed by K.M.~Hui  and Soojung Kim  in \cite{HK}. The self-similar solutions constructed in \cite{HK} are of the form,
\begin{equation*}
U(x,t)=t^{-\alpha}f(t^{-\beta}x)\quad\forall 0\ne x\in\R^n, t>0,
\end{equation*}
where $f$ is a radially symmetric solution of \eqref{f-eqn} with $\alpha$, $\beta$ satisfying
\begin{equation}\label{ab-relation2}
\alpha=\frac{2\beta-1}{1-m},\quad \beta=\frac{1}{2-\gamma(1-m)}, \quad\mbox{ and }\quad
\frac{2}{1-m}<\gamma<\frac{n-2}{m}.
\end{equation}
It is proved in \cite{HK} that such function $f$ with $\alpha$, $\beta$ satisfying \eqref{ab-relation2} satisfies
\begin{equation*}
\lim_{r\to 0}r^{\alpha/\beta}f(r)=A\quad\mbox{ and }\quad
\lim_{r \to \infty} r^{\frac{n-2}{m}} f(r)=D_A 
\end{equation*}
for some constant $A>0$ and some constant $D_A>0$ depending on $A$. Thus the behaviour of the solution $f$ of \eqref{ode} with $\alpha$, $\beta$ satisfying \eqref{ab} that blows up at the origin is completely different from the behaviour of the solution $f$ of \eqref{ode} with $\alpha$, $\beta$ satisfying \eqref{ab-relation2} that blows up at the origin. 

In section \ref{section-Higher order asymptotics}, we prove the higher order asymptotics of the eternal self-similar solutions near the origin, Theorem \ref{Thm HAB}, by using modifications of the proofs in \cite{Hs4}.

In section \ref{section-existence-uniqueness-asymptotic-behaviour}, as a consequence, we will prove the existence, uniqueness, and asymptotic large time behaviour of the singular solutions of \eqref{main-eq}.


Precisely, by using Theorem \ref{Thm HAB}, we have $L^1$-contraction with weight $|x|^{-\mu}$ ($0<\mu\le n- 2/(1-m)$), Theorem \ref{thm exi uni u}. Therefore, we prove the existence and uniqueness of the singular solution to the Cauchy problem \eqref{main-eq}, which blows up at the origin for all time $t>0$ when $0<m<\frac{n-2}{n}$, $n\ge 3$, and the initial value $u_0$ lies between two solutions of \eqref{ode} which blow up at the origin with $\alpha$, $\beta$ satisfying \eqref{ab}. 

Furthermore, for $n= 3, 4$,  $\frac{n-2}{n+2}\le m<\frac{n-2}{n}$ and $\alpha$, $\beta$ given by \eqref{ab}, 
we prove $L^1$-contraction with a postive power of the self-similar function $f_\lambda$ as the weight function, Theorem \ref{u-v-L1-contraction2}. Therefore, we have the asymptotic large time behaviour of the singular solution of \eqref{main-eq}, under some appropriate conditions on the initial value $u_0$, Theorem \ref{thm assm}. More precisely, if $u$ is the solution of \eqref{main-eq}, then under appropriate condition on the initial value $u_0$, we proved that the rescaled function
\begin{equation}\label{rescaled-soln-defn}
\4{u}(x,t)=e^{\alpha t}u(e^{\beta t}x, t)\quad\forall x\in\re^n \setminus \{0\}, t\ge 0
\end{equation}
converges uniformly on every compact subset of $\R^n\setminus\{0\}$ to  some radially symmetric function $f$ which satisfies \eqref{ode} and blows up at the origin as $t\to\infty$. Note that the rescaled function $\4{u}$ satisfies
\begin{align}\label{main-eq-resc}
\begin{cases}
\widetilde u_t=\frac{n-1}{m} \Delta\widetilde{u}^m + \alpha \widetilde u + \beta y \cdot \nabla \widetilde u 
& \quad \text{ in } (\re^n \setminus \{0\}) \times (0, \infty), \\
\widetilde{u}(\cdot,0)=u_0 & \quad \text{ in } \re^n \setminus \{0\}.
\end{cases}
\end{align}

In section \ref{section-asymptotic-behaviour-radially-symmetric-singular-soln}, we prove the asymptotic large time behaviour of the radially symmetric singular solutions of \eqref{main-eq}, Theorem \ref{thm assm rad sym}. If $u$ is a radially symmetric solution of \eqref{fde}, then the function
\begin{equation}\label{u-bar-defn}
\2{u}(x,t)=|x|^{-\frac{n-2}{m}}u(r^{-1},t)\quad\forall x\in\R^n\setminus\{0\}, r=|x|>0, t>0,
\end{equation}
satisfies
\begin{equation}\label{u-bar-eqn}
\2{u}_t=\frac{n-1}{m}|x|^{n+2-\frac{n-2}{m}}\Delta \2{u}^m\quad\mbox{ in }(\R^n\setminus\{0\})\times (0,\infty).
\end{equation}
Hence if $u$ is a radially symmetric solution of \eqref{main-eq}, then the function $\2{u}$ given by \eqref{u-bar-defn} satisfies
\begin{align}\label{Inversed eq}
\begin{cases}
\overline u_t=\frac{n-1}{m} |x|^{n+2-\frac{n-2}{m}} \Delta \overline u^m & \quad \text{ in } (\re^n \setminus \{0\}) \times (0, \infty)\\
\overline u(\cdot,0)= r^{-\frac{n-2}{m}} u_0(r^{-1})=:\overline u_0 (r) & \quad \text{ in } \re^n \setminus \{0\}.
\end{cases}
\end{align}
By studying the asymptotic large time behaviour of the radially symmetric solution $\2{u}$ of
\eqref{Inversed eq}, we obtain the asymptotic large time behaviour of the radially symmetric solution $u$ of \eqref{main-eq}, which blow up at the origin with initial value $u_0$ lying between two solutions of \eqref{ode} that blow up at the origin
for some constants $\alpha$, $\beta$ satisfying \eqref{ab} for the case $3\le n<8$ and $1-\sqrt{2/n}\le m<\min\left(\frac{2(n-2)}{3n},\frac{n-2}{n+2}\right)$.

We note that in section \ref{section-eternal-self-similar-soln}, the inversion technique of K.M.~Hui and Soojung Kim \cite{HK} is exploited to study the existence and uniqueness of the solution of \eqref{ode} with $\alpha$, $\beta$ satisfying \eqref{ab}. 
More precisely, if $f$ is a solution of \eqref{ode}, then as proved by K.M.~Hui and Soojung Kim \cite{HK} the function  $g$ given by 
\begin{equation}\label{def g}
g(r):=r^{-\frac{n-2}{m}}f(r^{-1})\quad\forall r>0,
\end{equation}
satisfies 
\begin{equation}\label{inv eq}
\frac{n-1}{m}\left((g^m)_{rr}+\frac{n-1}{r}(g^m)_r\right)+ r^{\frac{n-2}{m}-n-2}\left ( \widetilde \alpha g+\widetilde\beta rg_r\right )=0,\quad g>0,\quad\mbox{ in }(0,\infty),
\end{equation}
where
\begin{equation}\label{tilde ab}
\widetilde \alpha=\alpha-\frac{n-2}{m}\beta\quad \text{ and } \quad \widetilde \beta=-\beta.
\end{equation}
Conversely, if $g$ is a solution of \eqref{inv eq} with $\alpha$, $\beta$, $\4{\alpha}$, $\4{\beta}$ satisfying \eqref{ab} and \eqref{tilde ab} respectively, then the function $f$ given by 
\begin{equation}\label{f-g-relation}
f(r):=r^{-\frac{n-2}{m}}g(r^{-1})\quad\forall r>0,
\end{equation}
satisfies \eqref{ode} with $\alpha$, $\beta$ satisfying \eqref{ab}. Hence, the existence of a solution $f$ of \eqref{ode} which blows up at the origin follows from the existence of a solution $g$ of \eqref{inv eq} which was proved in \cite{HK}. We will also use a modification of the proof of S.Y.~Hsu \cite{Hs1} to prove the growth rate of the solution $g$ of \eqref{inv eq} at infinity which by \eqref{f-g-relation} then implies the blow up rate of $f$ at the origin. 

In the rest of the paper, unless otherwise stated, we will let $n\ge 3$, $0<m<\frac{n-2}{n}$ and let $\alpha$, $\beta$, $\4{\alpha}$, $\4{\beta}$ be as given by \eqref{ab} and  \eqref{tilde ab}, respectively. If $u$ is the solution of \eqref{fde}, we will let $\4{u}$ and $\2{u}$ be as given by \eqref{rescaled-soln-defn} and \eqref{u-bar-defn}, respectively. Note that by \eqref{ab} and \eqref{tilde ab}, 
\begin{equation}\label{alpha-beta-and-tilde-relation}
\frac{\widetilde \alpha}{\widetilde \beta}=-\frac{2}{1-m}+ \frac{n-2}{m}\in \left(0,\frac{n-2}{m}\right), \quad \widetilde \beta>0, \quad \text{ and } \quad \widetilde \alpha>0.
\end{equation}


\subsection{Definitions}
 
For any function $k : \re^n \setminus \{0\} \to [0,\infty)$, let
\begin{equation*}
L^1 \left (k; \R^n\setminus\{0\}\right ):=\left\{ h\in L_{loc}^1(\R^n\setminus\{0\}): \int_{\R^n\setminus\{0\}} |h(x)| k(x) \, dx<\infty\right\}
\end{equation*}
with the norm 
$$\|h\|_{L^1 \left ( k; \R^n\setminus\{0\}\right )}=\int_{\R^n\setminus\{0\}} |h(x)| k(x) \, dx.$$

 For any $0\leq u_0\in L^1_{\loc}(\R^n\setminus\{0\}),$ we say that $u$ is a solution of \eqref{main-eq} 
if $u>0$ in $(\R^n\setminus\{0\})\times(0, \infty)$ satisfies \eqref{fde} in $ \left(\R^n\setminus\{0\}\right)\times(0, \infty)$ in the classical sense and 
\begin{equation*}
\| u(\cdot, t)-u_0\|_{L^1(K)}\to 0\quad\mbox{ as }t\to 0
\end{equation*} 
for any compact set $K\subset \R^n\setminus\{0\}.$

 For any $x_0\in\R^n$ and $R>0$, we let $B_{R}(x_0)=\{x\in\R^n: |x-x_0|<R\}$ and $B_R=B_R(0)$. For any $R>1$, let $A_R=\{x\in\R^n:1/R<|x|<R\}$. For any set $E$ we let $\chi_E$ be the characteristic function of $E$. We also let $\omega_n$ denote the surface area of
 $S^{n-1}=\{x\in\R^n:|x|=1\}$. 

\subsection{Main theorems}

The main results are the following.

\begin{theorem}[Existence of self-similar profile] \label{f-existence-uniqueness-thm}
Let $n\ge 3$, $0<m<\frac{n-2}{n}$, and  $\alpha$, $\beta$ be as given by \eqref{ab}. Then for any constant $A>0$ there exists a unique solution $f=f_{\beta,A}$ of \eqref{ode} which satisfies  \eqref{gro inf}. Moreover, $f$ satifies \eqref{gro 0} and
\begin{equation}\label{f dec}
\alpha f(r)+\beta r f_r(r)>0 \quad \forall r>0.
\end{equation}
\end{theorem}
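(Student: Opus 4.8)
The plan is to reduce everything to the inverted equation \eqref{inv eq} and import the existence theory of \cite{HK}. First I would set $g(r)=r^{-\frac{n-2}{m}}f(r^{-1})$, so that, by the inversion identities \eqref{inv eq}--\eqref{f-g-relation}, a function $f$ solves \eqref{ode} with $\alpha,\beta$ as in \eqref{ab} if and only if $g$ solves \eqref{inv eq} with $\widetilde\alpha,\widetilde\beta$ as in \eqref{tilde ab}. By \eqref{alpha-beta-and-tilde-relation} the pair $(\widetilde\alpha,\widetilde\beta)$ lies in the admissible range of \cite{HK}, so for each prescribed $A>0$ there is a positive solution $g=g_A$ of \eqref{inv eq} on $(0,\infty)$ that is regular at the origin with $g_A(0^+)=A$ and $g_A'(0^+)=0$. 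Defining $f$ through \eqref{f-g-relation} gives a positive solution of \eqref{ode}, and since $r^{\frac{n-2}{m}}f(r)=g(r^{-1})\to g_A(0^+)=A$ as $r\to\infty$, the behaviour \eqref{gro inf} holds automatically. This disposes of existence and of \eqref{gro inf}.

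The substance of the theorem is the blow-up rate \eqref{gro 0}, which under the inversion is equivalent to the sharp growth of $g$ at infinity, namely
\begin{equation*}
\lim_{s\to\infty}\frac{s^{\frac{n-2-nm}{m}}\,g(s)^{1-m}}{\log s}=\frac{2(n-1)(n-2-nm)}{(1-m)|\beta|};
\end{equation*}
the logarithmic factor is forced by the resonant relation $\alpha/\beta=2/(1-m)$ in \eqref{ab}, in contrast with the pure-power behaviour found in \cite{HK} under \eqref{ab-relation2}. To prove this I would adapt S.Y.~Hsu \cite{Hs1}: first derive two-sided bounds showing that $s^{\frac{n-2-nm}{m}}g(s)^{1-m}$ grows exactly like $\log s$, and then pin down the constant by studying the first-order ODE satisfied by the rescaled quantity $\Phi(\tau):=e^{\frac{n-2-nm}{m}\tau}g(e^{\tau})^{1-m}$ as $\tau\to\infty$, in which the drift term $\widetilde\beta r g_r$ balances the diffusion $\frac{n-1}{m}\Delta g^m$. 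This asymptotic analysis, with the logarithmic gain, is where I expect the main difficulty to lie; the remaining assertions are comparatively soft.

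For uniqueness I would argue entirely on the $g$ side. Near $r=0$ the equation \eqref{inv eq} is of Fuchsian type in the radial variable, so a contraction-mapping argument produces a unique regular solution with prescribed $g(0)=A$, $g'(0)=0$; for $r>0$ with $g>0$ the equation is non-degenerate and standard ODE uniqueness propagates the solution uniquely to all of $(0,\infty)$. Translating back through \eqref{f-g-relation} shows that $A$ determines $f$, which is exactly the uniqueness claim, since $A$ is the constant appearing in \eqref{gro inf}.

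Finally, for the monotonicity \eqref{f dec} set $P(r):=\alpha f(r)+\beta r f_r(r)$; its positivity is equivalent to the eternal self-similar solution $U(x,t)=e^{-\alpha t}f(e^{-\beta t}x)$ being strictly decreasing in $t$. The key observation is that time translation acts on profiles by $f\mapsto f_s$ with $f_s(y)=e^{-\alpha s}f(e^{-\beta s}y)$, that each $f_s$ again solves \eqref{ode}, and that its constant in \eqref{gro inf} equals $e^{-\kappa s}A$ with $\kappa=|\beta|\,\widetilde\alpha/\widetilde\beta>0$. By the uniqueness just proved, $f_s=f_{\beta,\,e^{-\kappa s}A}$, and differentiating in $s$ at $s=0$ yields $P=\kappa A\,\partial_A f_{\beta,A}$; thus \eqref{f dec} is equivalent to strict monotonicity of the profile in $A$. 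I would first obtain the non-strict ordering $f_{\beta,A_1}\le f_{\beta,A_2}$ for $A_1<A_2$ from the monotone dependence of the \cite{HK} solutions of \eqref{inv eq} on their defining parameter, giving $P\ge 0$, and then upgrade to $P>0$ on $(0,\infty)$ by the strong maximum principle applied to the linear equation obtained by linearizing \eqref{ode} at $f$, of which $P$ is a solution that is positive near infinity (indeed $P\sim\kappa A\,r^{-\frac{n-2}{m}}$ there, while the leading terms cancel as $r\to 0$ by \eqref{gro 0}).
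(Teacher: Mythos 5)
Your existence step and your plan for \eqref{gro 0} coincide with the paper's own route: invert via \eqref{def g}, quote Theorem \ref{g-existence-thm} (Lemma 2.1 and Theorem 2.4 of \cite{HK}) for the profile $g$, and adapt \cite{Hs1} to show that $w(r)=r^{(1-m)\widetilde\alpha/\widetilde\beta}g(r)^{1-m}$ grows like $\frac{2(n-1)(n-2-nm)}{(1-m)\widetilde\beta}\log r$; this is exactly Lemma \ref{c123} and Theorem \ref{w-limit-thm}. The first genuine gap is in your uniqueness argument. Under the inversion, \eqref{gro inf} translates only into the limit statement $\lim_{r\to 0^+}g(r)=A$; it prescribes nothing about $g_r$ near $r=0$. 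Contraction mapping at the origin gives uniqueness \emph{within the class of solutions regular at $r=0$}, so to identify an arbitrary solution $f$ of \eqref{ode} satisfying \eqref{gro inf} with the constructed one you must additionally show that every solution $g$ of \eqref{inv eq} whose value merely converges to $A$ at the origin has the regular boundary behaviour there. That implication is the actual content of Lemma 3.2 and Lemma 3.3 of \cite{HK}, invoked in the paper as Lemma \ref{lem gg}, and it is not automatic: singular ODEs can admit solutions with the same limit value but inadmissible derivative behaviour. Moreover your normalization $g_r(0^+)=0$ is false for $\frac{n-2}{n+1}\le m<\frac{n-2}{n}$: there the unique solution is only $C^{0,\delta_0}$ and satisfies \eqref{eq-initial-m-large}, i.e. $r^{\delta_1}g_r(r)\to-\frac{\widetilde\alpha A^{2-m}}{n-2-2m}\neq 0$, so the Fuchsian set-up you describe does not even apply in that range.

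The second gap is that your proof of \eqref{f dec} is circular. The identity $P=\kappa A\,\partial_Af_{\beta,A}$ is correct, but the monotone dependence of the profiles on $A$ that you want to feed into it is \emph{equivalent} to \eqref{f dec}, not independent of it: differentiating the scaling relation \eqref{f-lambda-defn} gives $\frac{d}{d\lambda}f_\lambda(r)=\beta^{-1}\lambda^{\frac{2}{1-m}-1}\bigl(\alpha f_1+\beta\rho f_{1,r}\bigr)\big|_{\rho=\lambda r}$, so monotonicity in $\lambda$ (Remark \ref{flambda12}, i.e. Remark 2 of \cite{HK}) holds if and only if \eqref{f dec} holds for $f_1$, and in \cite{HK} the monotonicity is \emph{derived from} the latter. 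Asserting it as an input therefore assumes what is to be proved. The paper's own argument bypasses all of this with a one-line computation: by \eqref{g-monotone-expression}, which is part of the imported Theorem \ref{g-existence-thm}, one has $\alpha f(r)+\beta rf_r(r)=\widetilde\beta r^{-\frac{n-2}{m}}\bigl(\frac{\widetilde\alpha}{\widetilde\beta}g(r^{-1})+r^{-1}g_r(r^{-1})\bigr)>0$. If you insist on your route, you would need an independent proof (for instance a sliding or comparison argument for \eqref{inv eq}) that $g_{\widetilde\beta,\eta}$ is monotone in $\eta$, which you do not supply.
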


Let $f_1$ be the unique radially symmetric solution of \eqref{f-eqn} which satisfies  \eqref{gro inf} with $A=1$ given by Theorem \ref{f-existence-uniqueness-thm},
\begin{equation}\label{f-lambda-defn}
f_\lambda(x)= \lambda^{\frac{2}{1-m}}f_1(\lambda x) \quad \forall x\in\R^n\setminus\{0\}, \lambda>0,
\end{equation}
\begin{equation}\label{def g lam}
g_\lambda(r)=r^{-\frac{n-2}{m}}f_\lambda(r^{-1})\quad\forall r>0, \lambda>0,
\end{equation}
\begin{equation}\label{def Ulam}
U_{\lambda}(x,t)=e^{-\alpha t}f_{\lambda}(e^{-\beta t}x)\quad \forall x\in \R^n \setminus \{0\},t\in \R, \lambda>0,
\end{equation}
and
\begin{equation}\label{def Ulam inv}
\overline U_\lambda(r ,t)=e^{-\widetilde \alpha t} g_\lambda(e^{-\widetilde \beta t} r)\quad \forall x\in \R^n \setminus \{0\},t\in \R, \lambda>0,
\end{equation}
for the rest of the paper. Note that by \eqref{def g lam}, \eqref{def Ulam} and \eqref{def Ulam inv}.
\begin{equation}\label{U-lambda-U-bar-eqn}
\overline U_\lambda(r ,t)=r^{-\frac{n-2}{m}}U_\lambda(r^{-1},t)\quad \forall x\in \R^n \setminus \{0\},t\in \R, \lambda>0.
\end{equation}
Hence the definition \eqref{def Ulam inv} is consistent with the definition \eqref{u-bar-defn}.

\begin{remark}\label{flambda12a}
The function $f_\lambda$ satisfies \eqref{ode} with 
\begin{equation}\label{gro fl 0}
\lim_{r\to 0}\frac{r^2f_\lambda (r)^{1-m}}{\log r^{-1}}=\lim_{r\to 0}\frac{(\lambda r)^2f_1 (\lambda r)^{1-m}}{\log\lambda-\log (\lambda r)} =\frac{2(n-1)(n-2-nm)}{|\beta| (1-m)}
\end{equation}
and
\begin{equation}\label{gro fl inf} 
\lim_{r \to \infty} r^{\frac{n-2}{m}} f_\lambda(r)= \lim_{r \to \infty} \lambda^{\frac{2}{1-m}-\frac{n-2}{m}} (\lambda r)^{\frac{n-2}{m}} f_1(\lambda r)=\lambda^{\frac{2}{1-m}-\frac{n-2}{m}}.
\end{equation}
Hence by Theorem \ref{f-existence-uniqueness-thm}, $f_{\lambda}=f_{\beta,A}$ is the unique solution of \eqref{ode} which satisfies \eqref{gro 0} and \eqref{gro inf} with $A=\lambda^{-\gamma_1}$, where
\begin{equation}\label{gamma1-defn}
\gamma_1=\frac{n-2}{m}-\frac{2}{1-m}=\frac{n-2-nm}{m(1-m)}>0.
\end{equation}
Note that since $f_1$ satisfies \eqref{gro inf} with $A=1$,  there exists a constant $r_1>0$ such that
\begin{equation*}
\frac{1}{2}\le r^{\frac{n-2}{m}}f_1(r)\le \frac{3}{2}\quad\forall r\ge r_1.
\end{equation*}
Hence, by \eqref{f-lambda-defn},
\begin{equation*}
\frac{1}{2}\lambda^{\frac{2}{1-m}-\frac{n-2}{m}}\le r^{\frac{n-2}{m}}f_{\lambda}(r)\le \frac{3}{2}\lambda^{\frac{2}{1-m}-\frac{n-2}{m}}\quad\forall r\ge r_1/\lambda.
\end{equation*}
Moreover, by \eqref{gro fl 0}, there exist constants $c_2>c_1>0$ such that for any $\lambda>0$, there exists a constant $r_2=r_2(\lambda)>0$ such that
\begin{equation*}
c_1\left(\frac{\log r^{-1}}{r^2}\right)^{\frac{1}{1-m}}\le f_{\lambda}(r)\le c_2\left(\frac{\log r^{-1}}{r^2}\right)^{\frac{1}{1-m}}\quad\forall 0<r<r_2(\lambda).
\end{equation*}
\end{remark}

\begin{theorem}[Higher order asymptotic of self-similar solutions near the origin]\label{Thm HAB}
Let $n\ge3$, $0<m<\frac{n-2}{n}$, $\lambda>0$, $A>0$,  and $\alpha$, $\beta$, $\4{\alpha}$, $\4{\beta}$, $\gamma_1$ be as given in \eqref{ab}, \eqref{tilde ab}, and \eqref{gamma1-defn} respectively. Let 
\begin{equation}\label{a1-defn}
a_1:=\frac{(n-2-(n+2)m)^2}{4(n-2-nm)^2}-\frac{(1-m)^2 a_2(1, 1)}{4(n-1)(n-2-nm)^2}
\end{equation}
and 
\begin{equation}\label{k0-defn}
K_0=\frac{(1-m)K(1,1)}{2(n-1)(n-2-nm)},
\end{equation} 
where
\begin{align}\label{a2-defn}
a_2(\eta, \widetilde \beta)=&\frac{2(1-2m)(n-1)(n-2-nm)}{(1-m)^2}+ \frac{(n-1)(n-2-(n+2)m)^2}{(1-m)^2}\notag\\
&\quad -\frac{(n-2-(n+2)m)}{(1-m)}K(\eta, \widetilde \beta) \widetilde \beta
\end{align}
and
$K(\eta,\4{\beta})$ is given by \eqref{h1-infty-limit} 
for $m \neq \frac{n-2}{n+2}$ and by \eqref{h-infty-limit} for $m=\frac{n-2}{n+2}$ respectively. 
Let
\begin{equation}\label{a3-defn}
a_3(A,\4{\beta})=a_1+\frac{(n-2-(n+2)m)}{2(n-2-nm)\gamma_1}\log (A\4{\beta}^{\frac{1}{1-m}}).
\end{equation}
Let $f_{\beta,A}$ by the unique solution of \eqref{ode} which satisfies  \eqref{gro inf}
and let $f_{\lambda} (r)$ be as given by \eqref{f-lambda-defn}. Then  
\begin{align}\label{f-lambda-A-expansion}
f_{\beta,A}^{1-m}(r)
=&\frac{2(n-1)(n-2-nm)}{(1-m)|\beta| r^2}\left\{\log r^{-1} +\frac{(n-2-(n+2)m)}{2(n-2-nm)}\log (\log r^{-1})\right.\notag\\
&\qquad \left. +K_0+\frac{1}{\gamma_1}\log A +\frac{m} {n-2-nm} \log|\beta|+\frac{a_3(A,\4{\beta})}{\log r^{-1}} \right.\notag\\
&\qquad \left. +\frac{(n-2-(n+2)m)^2}{4(n-2-nm)^2}\cdot \frac{\log (\log r^{-1})}{\log r^{-1}}+ o\left(\frac{1}{\log r^{-1}}\right)\right \} \quad \text{as } r\to 0
\end{align}
and
\begin{align}\label{f-lambda-expansion}
f_{\lambda}^{1-m}(r)
=&\frac{2(n-1)(n-2-nm)}{(1-m)|\beta| r^2}\left\{\log r^{-1} +\frac{(n-2-(n+2)m)}{2(n-2-nm)}\log (\log r^{-1})\right.\notag\\
&\qquad \left. +K_0-\log\lambda +\frac{m} {n-2-nm} \log|\beta|+\frac{a_3(\lambda^{-\gamma_1},\4{\beta})}{\log r^{-1}} \right.\notag\\
&\qquad \left. +\frac{(n-2-(n+2)m)^2}{4(n-2-nm)^2}\cdot \frac{\log (\log r^{-1})}{\log r^{-1}}+ o\left(\frac{1}{\log r^{-1}}\right)\right \} \quad \text{as } r\to 0.
\end{align}
\end{theorem}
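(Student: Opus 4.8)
The plan is to analyse the ODE \eqref{ode} directly as $r\to 0$, exploiting the exact cancellation forced by the relation $\alpha=2\beta/(1-m)$ in \eqref{ab}, and then to bootstrap the leading-order rate \eqref{gro 0} into the full expansion. Here I may take \eqref{gro 0} as given, since it is part of Theorem~\ref{f-existence-uniqueness-thm} and is obtained through the inversion \eqref{def g}--\eqref{inv eq} together with the growth-rate analysis of \cite{Hs1}. The key preliminary observation is that, writing $f(r)=r^{-2/(1-m)}\phi(r)$, the first-order part of \eqref{ode} collapses to $\alpha f+\beta r f_r=\beta\,r^{1-2/(1-m)}\phi_r$ because the coefficient $\alpha-2\beta/(1-m)$ vanishes identically; a short computation then shows this drift term balances the diffusion term $\frac{n-1}{m}\bigl((f^m)_{rr}+\frac{n-1}{r}(f^m)_r\bigr)$ at the common order $r^{-2/(1-m)}$. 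It is precisely this degenerate balance that produces the logarithmic rate $f^{1-m}\sim C_0 r^{-2}\log r^{-1}$ with $C_0=\frac{2(n-1)(n-2-nm)}{(1-m)|\beta|}$ and that opens the door to a cascade of logarithmic corrections. The monotonicity \eqref{f dec} translates into $\phi_r<0$, which I will use later to build barriers.

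To capture the corrections I would pass to the logarithmic variable $s=\log r^{-1}$ (so $s\to\infty$ as $r\to0$) and set $f(r)^{1-m}=C_0\,r^{-2}\,w(s)$, turning \eqref{ode} into a second-order ODE for $w$ whose leading balance is solved by $w\sim s$. Inserting the ansatz
\begin{multline*}
w(s)=s+\frac{n-2-(n+2)m}{2(n-2-nm)}\log s + K_0 + \tfrac{1}{\gamma_1}\log A + \tfrac{m}{n-2-nm}\log|\beta| \\
+\frac{a_3(A,\widetilde\beta)}{s}+\frac{(n-2-(n+2)m)^2}{4(n-2-nm)^2}\,\frac{\log s}{s}+o\!\left(\frac1s\right)
\end{multline*}
and matching powers of $s$ and $\log s$ order by order determines every coefficient: the solvability condition at the first correction fixes the coefficient of $\log s$ as $\frac{n-2-(n+2)m}{2(n-2-nm)}$, and at the constant level the inhomogeneity is genuinely nonlocal and must be encoded through an auxiliary function whose limit at infinity defines the constant $K(\eta,\widetilde\beta)$ entering \eqref{k0-defn} and \eqref{a2-defn}, via \eqref{h1-infty-limit} when $m\neq\frac{n-2}{n+2}$ and via \eqref{h-infty-limit} when $m=\frac{n-2}{n+2}$. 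The split between these two cases is unavoidable: at $m=\frac{n-2}{n+2}$ the factor $n-2-(n+2)m$ vanishes, so the $\log s$ and $\frac{\log s}{s}$ corrections drop out and the two homogeneous exponents governing the auxiliary equation coincide, producing a resonance that forces a separate evaluation of $K$.

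The hard part is to upgrade this formal matching into a rigorous expansion with a controlled $o(1/s)$ remainder. I would argue by finite induction: assuming the expansion is known up to order $s^{-k}$, I define the remainder $R(s)$ as the difference between $w$ and the truncated sum, derive the linear ODE it satisfies with a known inhomogeneity of size $O(s^{-k-1}\log s)$, and close the estimate by a barrier/comparison argument for that linear operator, using \eqref{f dec} and the two-sided bounds for $f$ recorded in Remark~\ref{flambda12a} to control the lower-order coefficients uniformly. The delicate points are that each step loses a power of $\log s$ and that the coefficients of the linearized operator themselves carry $s$-dependence inherited from $w$; a bootstrap alternating between integrating the ODE and reinserting the improved bound should converge, and this is where the modifications of the arguments in \cite{Hs4} enter. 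This step, rather than the coefficient bookkeeping, is the main obstacle.

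Finally, the expansion \eqref{f-lambda-expansion} for $f_\lambda$ follows from \eqref{f-lambda-A-expansion} by pure scaling. By \eqref{f-lambda-defn} and Remark~\ref{flambda12a} one has $f_\lambda=f_{\beta,A}$ with $A=\lambda^{-\gamma_1}$, so $\frac{1}{\gamma_1}\log A=\frac{1}{\gamma_1}\log\lambda^{-\gamma_1}=-\log\lambda$ and $a_3(A,\widetilde\beta)$ becomes $a_3(\lambda^{-\gamma_1},\widetilde\beta)$; substituting these into \eqref{f-lambda-A-expansion} reproduces \eqref{f-lambda-expansion} termwise, with no further analysis required.
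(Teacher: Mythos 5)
Your change of variables ($s=\log r^{-1}$, $f^{1-m}=C_0 r^{-2}w(s)$) is, modulo the inversion \eqref{def g}, exactly the paper's substitution (by \eqref{alpha-beta-and-tilde-relation} one has $r^{(1-m)\tilde\alpha/\tilde\beta}g(r)^{1-m}=r^{-2}f(r^{-1})^{1-m}$, so your $w$ is the paper's $\tilde w$ up to the constant $C_0$), and your recognition that $K$ must be defined as the limit of an auxiliary function, with a separate treatment of the resonant case $m=\frac{n-2}{n+2}$, matches the paper's structure. But there is a decisive gap: your claim that "matching powers of $s$ and $\log s$ order by order determines every coefficient" is false for precisely the coefficients that carry the content of \eqref{f-lambda-A-expansion}. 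The ODE forces only the coefficients of $\log s$ and $\frac{\log s}{s}$, together with an affine relation between the $1/s$-coefficient and the constant term (this is what Lemma \ref{lim s2h2s} expresses, since $a_2(\eta,\tilde\beta)$ is affine in $K(\eta,\tilde\beta)$). The constant term itself is a free parameter distinguishing the solutions $f_{\beta,A}$ within the one-parameter family, so no matching procedure can produce the explicit splitting $K_0+\frac{1}{\gamma_1}\log A+\frac{m}{n-2-nm}\log|\beta|$, nor the term $\frac{n-2-(n+2)m}{2(n-2-nm)\gamma_1}\log(A\tilde\beta^{\frac{1}{1-m}})$ inside $a_3(A,\tilde\beta)$. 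Note that $K_0$ and $a_1$ are defined through $K(1,1)$ and $a_2(1,1)$, i.e., through the \emph{normalized} solution with $\eta=1$, $\tilde\beta=1$; any proof must therefore connect the general solution to the normalized one. The paper does this by the scaling identities of Lemma \ref{f-transform-formula-lem}, writing $f_{\beta,A}$ as an explicit rescaling of $f_{-1,1}$, and then expanding the nested logarithm as in \eqref{log-log-expansion}; it is exactly this expansion that converts the rescaling into the constant shifts and turns $a_1$ into $a_3(A,\tilde\beta)$. Your final paragraph performs only the trivial substitution $A=\lambda^{-\gamma_1}$; the nontrivial scaling step, which is where all the $(A,\beta)$-dependence in \eqref{f-lambda-A-expansion} actually comes from, is absent from your argument.

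A second, lesser gap is that the rigorous control of the remainder — which you correctly identify as the main obstacle — is only gestured at. The paper does not use barriers or induction on a truncated sum: it successively subtracts the known terms (the functions $h$, $h_1$, $h_2$, $h_3$ of \eqref{def h}, \eqref{def h1}, \eqref{h2-defn}, \eqref{h3s d}), multiplies each resulting equation or differential inequality by the integrating factor $\psi$ of \eqref{psi-defn}, integrates, and computes the limits by l'H\^opital's rule (Lemmas \ref{hs}, \ref{h1s}, \ref{lim s2h2s} and Corollaries \ref{h1}, \ref{h-limit6-lem}, \ref{lim s2-tail-small-at-infty-lem}); the quadratic exponent in $\psi$, coming from the drift coefficient $\frac{2(n-2-nm)}{1-m}s$, is what makes each limit computable. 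Your proposed comparison argument could plausibly be made to work — that same large drift provides strong damping — but as written no barriers are constructed, no linearized operator is analyzed, and the existence of the limits defining $K(\eta,\tilde\beta)$ in \eqref{h1-infty-limit} and \eqref{h-infty-limit}, which your ansatz presupposes, is assumed rather than proved.
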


\begin{theorem}[$L^1$-contraction with weight $|x|^{-\mu}$]\label{u-v-weight-mu-L1-contraction}
Let $n\ge 3$, $0<m<\frac{n-2}{n}$, $\lambda_1> \lambda_2>0$, and $\alpha$, $\beta$ be as given by \eqref{ab}. Let 
$$\mu_1=n-\frac{2}{1-m}$$
and $f_{\lambda_i}$, $U_{\lambda_i}$, $i=1,2$ be as given by \eqref{f-lambda-defn} and \eqref{def Ulam} respectively with $\lambda=\lambda_1,\lambda_2$. Let $u_{0,1}$, $u_{0,2}$ satisfy
\begin{equation}\label{initial_condition_lower-upper-bd}
f_{\lambda_1} (x)\le u_{0,i}(x)\le f_{\lambda_2}(x) \quad \text{ in } \re^n \setminus \{0\}
\quad\forall i=1,2
\end{equation}
and let $u_1$, $u_2$ be the solutions of \eqref{main-eq} with initial values $u_{0,1}$, $u_{0,2}$, respectively which satisfy 
\begin{equation}\label{uvU}
U_{\lambda_1} (x,t)\le u_i(x,t)\le U_{\lambda_2}(x,t) \quad \text{ in } (\re^n \setminus \{0\}) \times (0, \infty)\quad\forall i=1,2.
\end{equation} 
Suppose that 
\begin{equation}\label{u0-v0-l1-weighted}
u_{0,1}-u_{0,2}\in L^1 \left(|x|^{-\mu}; \re^n \setminus \{0\} \right)
\end{equation}
holds for some constant $\mu\in (0,\mu_1]$. Then for 
\begin{equation}\label{mu-m-range}
\mu<\mu_1\quad \text{ or }\quad\left\{\begin{aligned}
&\mu=\mu_1\\
&0<m<\min\left(\frac{n-2}{n},\frac{1}{2}\right),\end{aligned}\right.
\end{equation}
\begin{equation}\label{uu2}
\int_{\re^n \setminus \{0\}}|u_1-u_2|(x,t) |x|^{-\mu} \, dx \le \int_{\re^n \setminus \{0\}}|u_{0,1}-u_{0,2}|(x) |x|^{-\mu} \, dx \quad \forall t>0
\end{equation}
and
\begin{equation}\label{u+u2}
\int_{\re^n \setminus \{0\}} (u_1-u_2)_+(x,t) |x|^{-\mu} \, dx \le \int_{\re^n \setminus \{0\}}(u_{0,1}-u_{0,2})_+(x) |x|^{-\mu}\, dx\quad \forall t>0
\end{equation}
hold. 
\end{theorem}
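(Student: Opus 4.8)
The plan is to run a weighted Kato-type $L^1$-contraction on annuli and then let the inner and outer radii tend to $0$ and $\infty$. Write $w=u_1-u_2$ and $W=u_1^m-u_2^m$; since $s\mapsto s^m$ is strictly increasing, $\operatorname{sgn}w=\operatorname{sgn}W$. On an annulus $A_{\rho,R}=\{\rho<|x|<R\}$ I multiply $w_t=\frac{n-1}{m}\Delta W$ by $S_\delta(W)\,|x|^{-\mu}$, where $S_\delta$ is a smooth nondecreasing approximation of $\operatorname{sgn}$ with primitive $J_\delta\to|\cdot|$, and integrate by parts twice: once to move $\Delta$ off $W$, and a second time on the term $S_\delta(W)\nabla W\cdot\nabla|x|^{-\mu}=\nabla J_\delta(W)\cdot\nabla|x|^{-\mu}$. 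Letting $\delta\to0$ and using $S_\delta'\ge0$ to discard the nonnegative Dirichlet term, I reach
\begin{equation*}
\frac{d}{dt}\int_{A_{\rho,R}}|w|\,|x|^{-\mu}\,dx\le \frac{n-1}{m}\int_{A_{\rho,R}}|W|\,\Delta|x|^{-\mu}\,dx+\frac{n-1}{m}\int_{\partial A_{\rho,R}}\Bigl(\partial_\nu W\,\operatorname{sgn}(w)\,|x|^{-\mu}-|W|\,\partial_\nu|x|^{-\mu}\Bigr)\,dS .
\end{equation*}
A direct computation gives $\Delta|x|^{-\mu}=\mu(\mu-(n-2))|x|^{-\mu-2}$, and since $\mu\le\mu_1=n-\frac{2}{1-m}<n-2$ this is $\le0$, so the bulk term may be dropped. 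One checks likewise that the weight-normal contribution $-|W|\,\partial_\nu|x|^{-\mu}$ on the inner sphere $\{|x|=\rho\}$ has the favorable (nonpositive) sign and may also be discarded, so that only the flux term and the two contributions on $\{|x|=R\}$ remain.

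The outer boundary is routine. By the sandwich \eqref{uvU} one has $|W|\le U_{\lambda_2}^m-U_{\lambda_1}^m$, and \eqref{gro fl inf} gives $u_i\sim r^{-(n-2)/m}$, hence $U_{\lambda_2}^m-U_{\lambda_1}^m=O(r^{-(n-2)})$ as $r\to\infty$; interior parabolic regularity (after the natural parabolic rescaling the equation is uniformly parabolic on the compact range $[U_{\lambda_1},U_{\lambda_2}]$) controls $\partial_rW$ at the same scale. Both the flux term $\int_{|x|=R}\partial_rW\,\operatorname{sgn}(w)\,R^{-\mu}\,dS$ and $\int_{|x|=R}|W|\,\partial_\nu|x|^{-\mu}\,dS$ are then $O(R^{-\mu})\to0$ as $R\to\infty$, since $\mu>0$.

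The inner flux term as $\rho\to0$ is the main obstacle, and it is exactly here that the threshold in \eqref{mu-m-range} is forced. The leading-order parts of $f_{\lambda_1}$ and $f_{\lambda_2}$ coincide, so crude individual bounds are too weak and cancellation must be exploited. From the sandwich together with the sharp difference estimate (Corollary \ref{dif f}, which follows from the expansion \eqref{f-lambda-expansion} of Theorem \ref{Thm HAB}) one gets
\begin{equation*}
|W|\le U_{\lambda_2}^m-U_{\lambda_1}^m=O\!\left(r^{-\frac{2m}{1-m}}\bigl(\log r^{-1}\bigr)^{\frac{2m-1}{1-m}}\right)\qquad\text{as }r\to0,
\end{equation*}
and the same sharp bound for $|w|$ with exponent $\tfrac{m}{1-m}$ on the logarithm. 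Since $w$ solves the linear uniformly parabolic equation $w_t=\frac{n-1}{m}\Delta(aw)$ with $a=\frac{u_1^m-u_2^m}{u_1-u_2}$ pinched between positive constants on $[U_{\lambda_1},U_{\lambda_2}]$, interior gradient estimates applied on the scale $\rho$ transfer the sharp size bound to $\partial_rW$, yielding $|\partial_rW|=O\!\left(r^{-\frac{2m}{1-m}-1}(\log r^{-1})^{\frac{2m-1}{1-m}}\right)$. Inserting this into the inner flux term, with area factor $\omega_n\rho^{n-1}$, gives a quantity of order $\rho^{\,\mu_1-\mu}(\log\rho^{-1})^{\frac{2m-1}{1-m}}$, because $n-2-\frac{2m}{1-m}=\mu_1$. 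When $\mu<\mu_1$ the exponent $\mu_1-\mu$ is positive and the term vanishes; when $\mu=\mu_1$ it degenerates to $(\log\rho^{-1})^{\frac{2m-1}{1-m}}$, which tends to $0$ precisely when $\frac{2m-1}{1-m}<0$, i.e. $m<\frac12$. This is exactly the dichotomy \eqref{mu-m-range}.

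Combining the three estimates and integrating the differential inequality in time on $[t_0,t]$, then sending $R\to\infty$ and $\rho\to0$ (invoking $\liminf$ over a sequence of radii where needed), gives $\int_{\R^n\setminus\{0\}}|w|(t)\,|x|^{-\mu}\,dx\le\int_{\R^n\setminus\{0\}}|w|(t_0)\,|x|^{-\mu}\,dx$; letting $t_0\to0$, using the sandwich as dominating function together with the hypothesis \eqref{u0-v0-l1-weighted}, upgrades the $L^1_{\mathrm{loc}}$ initial trace to the weighted norm and yields \eqref{uu2}. The one-sided inequality \eqref{u+u2} follows from the identical computation with $J_\delta$ replaced by a smooth approximation of $s\mapsto s_+$ and $S_\delta$ by an approximation of the Heaviside function, under which every sign exploited above is preserved. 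The genuinely delicate point deserving the most care is the inner flux term: one must propagate the sharp \emph{cancellation} in the profile difference (Corollary \ref{dif f}) to the difference of the two honest solutions via the linearized parabolic gradient estimate, and the borderline restriction $m<\frac12$ at $\mu=\mu_1$ is a direct manifestation of this estimate being critical.
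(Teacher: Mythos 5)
Your overall strategy is the same as the paper's in its key ingredients: the Kato-type inequality for $|u_1-u_2|$, the sign $\Delta |x|^{-\mu}=\mu(\mu-(n-2))|x|^{-\mu-2}\le 0$ for $\mu\le\mu_1<n-2$, the decay $U_{\lambda_2}^m=O(r^{-(n-2)})$ at infinity, and above all the cancellation $|u_1^m-u_2^m|\le U_{\lambda_2}^m-U_{\lambda_1}^m=O\bigl(r^{-\frac{2m}{1-m}}(\log r^{-1})^{\frac{2m-1}{1-m}}\bigr)$ near the origin coming from Corollary \ref{dif f}; your identification of the exponent $\mu_1$ and of the logarithmic power producing the dichotomy \eqref{mu-m-range} is exactly right. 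The structural difference is that you work on sharp annuli $A_{\rho,R}$ and therefore must estimate boundary \emph{flux} terms $\int_{|x|=\rho}\partial_\nu(u_1^m-u_2^m)\,\mathrm{sgn}(w)\,|x|^{-\mu}\,dS$, whereas the paper multiplies by the cutoff $\eta_{\ve,R}$ and throws all derivatives onto $\eta_{\ve,R}$ and $|x|^{-\mu}$, so that only $|u_1^m-u_2^m|$ itself --- never its gradient --- has to be bounded, against $|\Delta\eta_{\ve}|\le C\ve^{-2}$ on $B_{2\ve}\setminus B_\ve$.

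This difference is where your proof has a genuine gap. Your inner flux term needs $|\partial_r W|\lesssim\rho^{-1}\sup|W|$, $W=u_1^m-u_2^m$, with constant independent of $\rho$, and you assert this follows from "interior gradient estimates applied on the scale $\rho$." But the linearized equation $w_t=\frac{n-1}{m}\Delta(aw)$ has diffusion coefficient $a\approx mU^{m-1}\approx\rho^2/\log\rho^{-1}$ on $|x|\approx\rho$, so the natural parabolic time scale at spatial scale $\rho$ is $\rho^2/a\approx\log\rho^{-1}\to\infty$: at a fixed time $t$ one can only fit parabolic cylinders of spatial radius $\sqrt{at}\approx\rho\sqrt{t/\log\rho^{-1}}\ll\rho$, and any interior estimate then yields at best $|\partial_r W|\lesssim\rho^{-1}\sqrt{\log\rho^{-1}/t}\,\sup|W|$; the clean factor $\rho^{-1}$ is not available. (At the outer boundary there is no such problem, since there $a\approx R^{(n-2)(1-m)/m}$ is large and the natural time scale is $o(1)$, so your outer estimate is fine.) The extra $(\log\rho^{-1})^{1/2}$ turns your inner term into $\rho^{\mu_1-\mu}(\log\rho^{-1})^{\frac{2m-1}{1-m}+\frac{1}{2}}$, which still vanishes when $\mu<\mu_1$, but in the borderline case $\mu=\mu_1$ it vanishes only for $m<\frac{1}{3}$, not for the claimed range $m<\frac{1}{2}$. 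A second defect also hits only the borderline case: you send $t_0\to 0$ \emph{after} $\rho\to 0$, invoking $U_{\lambda_2}-U_{\lambda_1}$ as dominating function, but $(U_{\lambda_2}-U_{\lambda_1})\,|x|^{-\mu_1}\approx|x|^{-n}(\log|x|^{-1})^{\frac{m}{1-m}}$ is not integrable near the origin, so dominated convergence fails there. Both problems disappear if you replace the sharp annulus by the cutoff $\eta_{\ve,R}$ as in the paper: no derivative of the solutions is ever estimated, the inner error is exactly $C\ve^{\mu_1-\mu}(\log\ve^{-1})^{\frac{2m-1}{1-m}}$, and the initial trace is taken with the cutoff in place (using only $L^1_{\loc}$ convergence to $u_{0,i}$) before letting $\ve\to 0$ and $R\to\infty$.
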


\begin{theorem}[Existence and uniqueness of solution]\label{thm exi uni u}
Let $n\ge 3$, $0<m<\frac{n-2}{n}$,  and $\alpha$, $\beta$ be as given by \eqref{ab}. Let $\lambda_1> \lambda_2>0$ and $f_{\lambda_i}$, $U_{\lambda_i}$, $i=1,2$ be as given by \eqref{f-lambda-defn} and \eqref{def Ulam} respectively with $\lambda=\lambda_1,\lambda_2$. Suppose $u_0$ satisfy 
\begin{equation}\label{ini con}
f_{\lambda_1} \le u_0 \le f_{\lambda_2}\quad\mbox{ in }\re^n \setminus \{0\}.
\end{equation}
Then \eqref{main-eq} has a unique solution $u$  which satisfies 
\begin{equation}\label{utu}
u_t \le \frac{u}{(1-m) t} \quad \text{ in } (\re^n \setminus \{0\}) \times (0, \infty)
\end{equation}
and
\begin{equation}\label{Ulam1u ent}
U_{\lambda_1}(x,t) \le u(x,t) \le U_{\lambda_2}(x,t) \quad \text{ in } (\re^n \setminus \{0\}) \times (0, \infty).
\end{equation} 
Hence if $u_0$ is radially symmetric, then for any $t>0$, $u(x, t)$ is radially symmetric in $x\in\R^n\setminus\{0\}$.
\end{theorem}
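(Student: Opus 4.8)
The plan is to get existence by exhausting $\R^n\setminus\{0\}$ with annuli and trapping the approximate solutions between the self-similar barriers $U_{\lambda_1}$ and $U_{\lambda_2}$, to get the differential inequality \eqref{utu} by a scaling–comparison argument whose crucial input is that each $U_\lambda$ is \emph{decreasing} in $t$, to get uniqueness directly from the weighted $L^1$-contraction of Theorem \ref{u-v-weight-mu-L1-contraction}, and to get radial symmetry from the rotational invariance of \eqref{fde} together with uniqueness.

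\emph{Existence.} For $R>1$ consider $A_R=\{1/R<|x|<R\}$ and solve the initial–boundary value problem for \eqref{fde} on $A_R\times(0,\infty)$ with lateral Dirichlet data $u_R=U_{\lambda_1}$ on $\partial A_R\times(0,\infty)$ and with smooth initial data $u_{0,R}$ satisfying $f_{\lambda_1}\le u_{0,R}\le f_{\lambda_2}$ on $A_R$, compatible with the lateral data at $t=0$ and increasing to $u_0$ in $L^1_{\loc}$ as $R\to\infty$. Since $U_{\lambda_1},U_{\lambda_2}$ are bounded between positive constants on $\overline{A_R}\times[0,T]$, the equation is nondegenerate there and classical quasilinear parabolic theory gives a smooth positive $u_R$. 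As $U_{\lambda_1},U_{\lambda_2}$ are exact solutions of \eqref{fde} and the parabolic-boundary data of $u_R$ lies between them (on the lateral boundary $u_R=U_{\lambda_1}\le U_{\lambda_2}$, and at $t=0$ one has $f_{\lambda_1}\le u_{0,R}\le f_{\lambda_2}$), the comparison principle on $A_R$ yields $U_{\lambda_1}\le u_R\le U_{\lambda_2}$. For $R'<R$ the restriction of $u_R$ to $A_{R'}$ has lateral data $u_R\ge U_{\lambda_1}=u_{R'}$ and larger initial data, so $u_{R'}\le u_R$ on $A_{R'}$; hence $u_R$ increases in $R$ and converges pointwise. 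On any compact $K\subset\R^n\setminus\{0\}$ the $u_R$ are uniformly bounded away from $0$ and $\infty$ by the barriers, so interior parabolic estimates give $C^{2,1}_{\loc}$-bounds, and the limit $u$ is a positive classical solution of \eqref{fde} on $(\R^n\setminus\{0\})\times(0,\infty)$ with $U_{\lambda_1}\le u\le U_{\lambda_2}$, i.e. \eqref{Ulam1u ent}. The initial trace $\|u(\cdot,t)-u_0\|_{L^1(K)}\to0$ follows from these uniform bounds together with \eqref{utu}, which controls the growth of $u$ in $t$ and yields $L^1_{\loc}$-equicontinuity up to $t=0$.

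\emph{The estimate \eqref{utu}.} The key point is that, by \eqref{f dec} applied to $f_\lambda$, one checks $\partial_t U_\lambda(x,t)=-e^{-\alpha t}\big(\alpha f_\lambda+\beta r(f_\lambda)_r\big)(e^{-\beta t}x)<0$, so $U_\lambda$ is strictly decreasing in $t$; consequently $t^{-1/(1-m)}U_\lambda$ is decreasing in $t$, that is $\sigma^{1/(1-m)}U_\lambda(x,t/\sigma)\ge U_\lambda(x,t)$ for all $\sigma\ge1$ and $t>0$. For the approximants set $\Phi_\sigma(x,t):=\sigma^{1/(1-m)}u_R(x,t/\sigma)$, which again solves \eqref{fde}. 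Its initial datum $\sigma^{1/(1-m)}u_{0,R}\ge u_{0,R}$ and, on the lateral boundary, $\Phi_\sigma=\sigma^{1/(1-m)}U_{\lambda_1}(x,t/\sigma)\ge U_{\lambda_1}(x,t)=u_R$ by the monotonicity just noted. The comparison principle on $A_R$ gives $\Phi_\sigma\ge u_R$, i.e. $\sigma^{1/(1-m)}u_R(x,t/\sigma)\ge u_R(x,t)$; differentiating at $\sigma=1$ yields $\partial_t u_R\le u_R/((1-m)t)$, and letting $R\to\infty$ gives \eqref{utu}.

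\emph{Uniqueness, symmetry, and the main obstacle.} If $u,\hat u$ both solve \eqref{main-eq}, satisfy \eqref{Ulam1u ent}, and share the initial datum $u_0$ obeying \eqref{ini con}, apply Theorem \ref{u-v-weight-mu-L1-contraction} with $u_{0,1}=u_{0,2}=u_0$ and any $\mu\in(0,\mu_1)$ (such $\mu$ exists since $\mu_1=n-2/(1-m)>0$ for $m<(n-2)/n$); as $u_{0,1}-u_{0,2}\equiv0\in L^1(|x|^{-\mu};\R^n\setminus\{0\})$, inequality \eqref{uu2} forces $\int_{\R^n\setminus\{0\}}|u-\hat u|(x,t)|x|^{-\mu}\,dx\le0$, whence $u\equiv\hat u$. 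If $u_0$ is radial then for every $\mathcal R\in O(n)$ the function $u(\mathcal Rx,t)$ solves \eqref{main-eq} with the same radial initial datum and the same radial barriers, so by uniqueness $u(\mathcal Rx,t)=u(x,t)$, i.e. $u(\cdot,t)$ is radial. The genuinely delicate part is the singularity at the origin: comparison and the existence theory are routine on each annulus, but one must ensure that the barrier bounds, the inequality \eqref{utu}, and above all the $L^1_{\loc}$ initial trace survive the passage $R\to\infty$ where the barriers blow up at $0$. Controlling $u_R$ near the origin uniformly in $R$—so that no mass is lost or created at the singularity—is the crux, and this is precisely where the sharp blow-up rate \eqref{gro fl 0} and the monotonicity \eqref{f dec} of the profile enter.
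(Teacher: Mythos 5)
Your proposal follows essentially the same route as the paper: existence by solving \eqref{fde} on the annuli $A_R$ with lateral Dirichlet data $U_{\lambda_1}$, trapping the approximants between the barriers $U_{\lambda_1}\le u_R\le U_{\lambda_2}$ and passing to the limit; uniqueness read off directly from Theorem \ref{u-v-weight-mu-L1-contraction} with $u_{0,1}=u_{0,2}=u_0$ and $\mu\in(0,\mu_1)$; radial symmetry from uniqueness and rotation invariance. The differences are organizational rather than conceptual: the paper invokes Theorem 2.2 of \cite{H2}, which delivers the annular solutions already equipped with \eqref{utu}, \eqref{Ulam1u ent} and the $L^1$ initial trace, and then passes to the limit via parabolic Schauder estimates \cite{LSU}, Ascoli and diagonalization (following Theorem 1.3 of \cite{HK}); you instead re-derive \eqref{utu} by the B\'enilan--Crandall scaling comparison, using \eqref{f dec} to check that $\partial_t U_{\lambda}<0$ so that the rescaled lateral data dominate, and you pass to the limit by monotonicity in $R$. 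Your scaling argument for \eqref{utu} is correct and is a nice self-contained substitute for the citation.

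The one step that does not hold as you justified it is the initial trace. From \eqref{utu} one only gets $u(x,t_2)\le (t_2/t_1)^{1/(1-m)}u(x,t_1)$ for $0<t_1<t_2$, and the factor $(t_2/t_1)^{1/(1-m)}-1$ does not tend to $0$ as $t_1,t_2\to 0$ (take $t_2=2t_1$); so \eqref{utu} together with the uniform barrier bounds does not yield $L^1_{\loc}$-equicontinuity up to $t=0$, and the claim ``the initial trace follows from these uniform bounds together with \eqref{utu}'' is unsupported. The trace for the limit has to come from a different mechanism. In your monotone setting it can be repaired as follows: for $R'>R$ and $\phi\in C_0^{\infty}(A_R)$ the weak formulation gives
\begin{equation*}
\int (u_{R'}-u_R)(x,t)\phi\,dx=\int (u_{0,R'}-u_{0,R})\phi\,dx+\frac{n-1}{m}\int_0^t\!\!\int (u_{R'}^m-u_R^m)\Delta\phi\,dx\,ds,
\end{equation*}
and letting $R'\to\infty$ (dominated convergence, using $u_{R'}\le U_{\lambda_2}$ which is bounded on $\supp\phi\times(0,1]$) yields $\int (u-u_R)(x,t)\phi\,dx\le\int (u_0-u_{0,R})\phi\,dx+Ct$; combining this with $\|u_R(\cdot,t)-u_{0,R}\|_{L^1(K)}\to 0$ and $u_{0,R}\uparrow u_0$ gives $\|u(\cdot,t)-u_0\|_{L^1(K)}\to 0$. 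This is in substance the argument of Theorem 1.3 of \cite{HK} that the paper's sketch delegates to; with this repair your proof is complete and coincides with the paper's.
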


\begin{remark}
If $u_{0,1}$, $u_{0,2}$, $u_1$, $u_2$, and $\mu$ are as given by Theorem \ref{u-v-weight-mu-L1-contraction}, then by  Theorem \ref{u-v-weight-mu-L1-contraction}, 
\begin{align}\label{rmm exp}
\int_{\re^n \setminus \{0\}}|\widetilde {u}_1-\widetilde {u}_2|(x,t) |x|^{- \mu} \, dx
=&e^{( \alpha - \beta n+ \beta \mu )t} \int_{\re^n \setminus \{0\}} |u_1-u_2|(y,t) |y|^{- \mu} \, dy\quad\forall t>0,0<\mu\le\mu_1\notag\\
\le&e^{( \alpha - \beta n+\beta \mu )t} \int_{\re^n \setminus \{0\}}|\widetilde{u}_{0,1}-\widetilde{u}_{0,2}|(x) |x|^{- \mu} \, dx\quad\forall t>0,0<\mu\le\mu_1.
\end{align}
Since $\mu\in (0,\mu_1]$,  $\alpha - \beta n+\beta \mu \ge 0$. Hence one does not know whether  the left hand side of \eqref{rmm exp} will converge to zero as $t$ goes to infinity. Thus for the asymptotic large time behaviour of the solutions of \eqref{main-eq}, we need another $L^1$-contraction result for the solutions of \eqref{main-eq} with weight $f^{m \gamma}_\lambda$ for some constant $\gamma>0$ that will imply the difference of the rescaled solutions of \eqref{main-eq} in the weighted $L^1$-norm converges to zero as $t$ goes to infinity.
\end{remark}

\begin{theorem}\label{u-v-L1-contraction2}
Let $n=3,4$, $\frac{n-2}{n+2}\le m<\frac{n-2}{n}$, 
\begin{equation}\label{gamma2-defn}
\gamma_2 :=\frac{1-m}{2m}\left(n-\frac{2}{1-m}\right),
\end{equation}
 and $\alpha$, $\beta$ be as given by \eqref{ab}.
Let $\lambda_1> \lambda_2>0$, $\lambda_3>0$, and $f_{\lambda_i}$, $U_{\lambda_i}, i=1,2,3$, be as given by \eqref{f-lambda-defn} and \eqref{def Ulam} respectively with $\lambda=\lambda_1,\lambda_2, \lambda_3$. Let $u_{0,1}$, $u_{0,2}$ satisfy \eqref{initial_condition_lower-upper-bd},
\begin{equation}\label{u0-v0-weighted-l1}
u_{0,1}-u_{0,2}\in L^1\left(f_{\lambda_3}^{m\gamma_2}; \re^n \setminus \{0\}\right)
\end{equation}
and let $u_1$, $u_2$ be the solutions of \eqref{main-eq} with initial values $u_{0,1}$, $u_{0,2}$, respectively which satisfy \eqref{uvU}. Then
\begin{equation}\label{uu0a}
\int_{\R^n\setminus\{0\}} |u_1-u_2|(x,t) f_{\lambda_3}^{m \gamma_2} (x)\, dx \le \int_{\R^n\setminus\{0\}} |u_{0,1}-u_{0,2}|(x) f_{\lambda_3}^{m \gamma_2} (x)\, dx \quad \forall t>0.
\end{equation}
\end{theorem}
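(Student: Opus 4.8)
The plan is to run the weighted $L^1$-contraction argument of Theorem~\ref{u-v-weight-mu-L1-contraction}, but with the power weight $|x|^{-\mu}$ replaced by $\phi:=f_{\lambda_3}^{m\gamma_2}$; the new ingredient is that this weight is \emph{superharmonic} exactly in the stated range of $(m,n)$. Put $w:=u_1-u_2$ and $a:=u_1^m-u_2^m$, so that $w_t=\frac{n-1}{m}\Delta a$ in $(\re^n\setminus\{0\})\times(0,\infty)$ and $\operatorname{sgn}w=\operatorname{sgn}a$. To avoid boundary terms at the puncture and at infinity I would not integrate over a sharp annulus but test against $\psi_\ve(a)\,\phi\,\zeta_R$, where $\psi_\ve$ is a smooth odd nondecreasing approximation of $\operatorname{sgn}$ with $\psi_\ve'\ge0$, and $\zeta_R$ is a cutoff equal to $1$ on $A_R$ and supported in $\{1/(2R)<|x|<2R\}$. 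Integrating by parts twice, discarding the good term $\frac{n-1}{m}\int\psi_\ve'(a)|\nabla a|^2\phi\zeta_R\,dx\ge0$, and letting $\ve\to0$ gives
\begin{equation*}
\frac{d}{dt}\int_{\re^n\setminus\{0\}}|w|\,\phi\,\zeta_R\,dx\le \frac{n-1}{m}\int_{\re^n\setminus\{0\}}|a|\,\Delta(\phi\zeta_R)\,dx ,
\end{equation*}
which is the decisive reduction, since its right-hand side involves $|a|$ but \emph{not} $\nabla a$.

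Writing $\Delta(\phi\zeta_R)=\zeta_R\Delta\phi+2\nabla\phi\cdot\nabla\zeta_R+\phi\Delta\zeta_R$, it suffices to show $\Delta\phi\le0$, so that the first term (with $\zeta_R\ge0$) may be dropped, and that the two $\zeta_R$-derivative terms, supported on the transition shells, tend to $0$ as $R\to\infty$. For the first point, set $v:=f_{\lambda_3}^m$, so $\phi=v^{\gamma_2}$ and
\begin{equation*}
\Delta\phi=\gamma_2 v^{\gamma_2-1}\Delta v+\gamma_2(\gamma_2-1)v^{\gamma_2-2}|\nabla v|^2 .
\end{equation*}
By \eqref{ode} and \eqref{f dec}, $\frac{n-1}{m}\Delta v=-(\alpha f_{\lambda_3}+\beta r\,\partial_r f_{\lambda_3})<0$, so the first summand is negative; moreover a direct computation from \eqref{gamma2-defn} gives $\gamma_2=\frac{n-2-nm}{2m}$, whence $0<\gamma_2\le1$ is \emph{equivalent} to $\frac{n-2}{n+2}\le m<\frac{n-2}{n}$, making the second summand nonpositive. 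Hence $\Delta\phi<0$, and this is precisely where the lower bound $m\ge\frac{n-2}{n+2}$ is used.

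The remaining and genuinely hard point is the vanishing of $\frac{n-1}{m}\int|a|\,(2\nabla\phi\cdot\nabla\zeta_R+\phi\Delta\zeta_R)\,dx$ over the shells $\{1/(2R)<|x|<1/R\}$ and $\{R<|x|<2R\}$. Here \eqref{uvU} gives $|a|\le U_{\lambda_2}^m-U_{\lambda_1}^m$, and the crucial input is that, by the higher-order expansion of Theorem~\ref{Thm HAB} (equivalently the refined difference estimate of Corollary~\ref{dif f}), the leading singular parts of $f_{\lambda_1}$ and $f_{\lambda_2}$ coincide, so that near the origin
\begin{equation*}
U_{\lambda_2}^m-U_{\lambda_1}^m \sim C\,\log(\lambda_1/\lambda_2)\,(\log|x|^{-1})^{\frac{2m-1}{1-m}}\,|x|^{-\frac{2m}{1-m}},
\end{equation*}
which is one full logarithmic factor smaller than $U_{\lambda_i}^m$ itself. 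Combining this with \eqref{gro 0} and the choice of $\gamma_2$ (under which $\phi\sim|x|^{-\mu_1}$, $\mu_1=n-\frac{2}{1-m}$, up to logarithms), a scaling count shows the inner-shell contribution is of borderline order $R^{0}\,(\log R)^{\frac{n}{2}-2}$: it vanishes for $n=3$, sits exactly at the borderline for $n=4$ where it must be controlled through the precise cancellation above, and diverges for $n\ge5$; this is the origin of the restriction $n=3,4$ (equivalently $m<\tfrac12$ in the admissible range, exactly as in the critical case $\mu=\mu_1$ of Theorem~\ref{u-v-weight-mu-L1-contraction}). The outer shells are harmless, since \eqref{gro inf} forces $\phi$ and $U_{\lambda_2}^m-U_{\lambda_1}^m$ to decay there, producing a factor $R^{-(n-2)\gamma_2}\to0$.

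Granting these shell estimates, I would integrate in time first, then send $R\to\infty$, using \eqref{u0-v0-weighted-l1} and monotone convergence to identify $\int|w|\phi\zeta_R\to\int|w|\phi$ at each time; this yields $\int|w(\cdot,t)|\phi\,dx\le\int|w(\cdot,0)|\phi\,dx$, which is \eqref{uu0a}. The main obstacle is unambiguously the inner-shell analysis: because the weight sits at the critical growth $|x|^{-\mu_1}$ at the puncture, the relevant integral is scale-invariant, and it is only the higher-order cancellation of Theorem~\ref{Thm HAB}, together with $n\in\{3,4\}$, that tips the logarithmic exponent to the correct side and makes the contraction close.
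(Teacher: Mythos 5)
Your setup coincides with the paper's own proof: the same weight $\phi=f_{\lambda_3}^{m\gamma_2}$, the same superharmonicity computation (via \eqref{ode}, \eqref{f dec} and the equivalence $0<\gamma_2\le1\Leftrightarrow\frac{n-2}{n+2}\le m<\frac{n-2}{n}$), the same use of Corollary \ref{dif f} to exploit the cancellation of the leading singular parts on the inner shell, and the same outer-shell decay $R^{-(n-2)\gamma_2}$. Your scale count is also correct: since $\frac{2m\gamma_2}{1-m}=n-\frac{2}{1-m}$, the inner-shell term is scale invariant with logarithmic exponent $\frac{n-4}{2}$, so it vanishes for $n=3$ and is $O(1)$ for $n=4$.

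The gap is the case $n=4$. You write that the borderline term ``must be controlled through the precise cancellation above,'' but that cancellation --- Corollary \ref{dif f}, i.e. $f_{\lambda_2}-f_{\lambda_1}\approx c\,r^{-\frac{2}{1-m}}(\log r^{-1})^{\frac{m}{1-m}}$ --- has already been fully spent in producing the exponent $\frac{n-4}{2}$; there is nothing further to extract from it, and for $n=4$ your argument only yields $\int|w(\cdot,t)|\phi\,dx\le\int|w(\cdot,0)|\phi\,dx+Ct$, which is boundedness, not contraction. The paper closes $n=4$ with a two-step bootstrap that your proposal is missing: (i) from the $O(1)$ bound one first deduces the a priori space-time estimate $\int_0^{T_0}\int_{\R^n\setminus\{0\}} q\,f_{\lambda_3}^{m\gamma_2}\,dx\,dt\le C'T_0<\infty$, where $q=|u_1-u_2|$; (ii) one then re-runs the inner-shell estimate \emph{without} replacing $q$ by the barrier difference $U_{\lambda_2}-U_{\lambda_1}$, instead bounding the coefficient $\ve^{-2}U_{\lambda_1}^{m-1}\le C/\log\ve^{-1}$ on $B_{2\ve}\setminus B_{\ve}$ (which follows from $f_{\lambda_1}^{m-1}(r)\approx r^2/\log r^{-1}$ by \eqref{gro 0}), so that after integrating in time the inner-shell contribution is at most $\frac{C}{\log\ve^{-1}}\int_0^t\int_{\R^n\setminus\{0\}} q\,f_{\lambda_3}^{m\gamma_2}\,dx\,ds\le \frac{CC'T_0}{\log\ve^{-1}}\to0$ as $\ve\to0$. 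Without this (or a substitute mechanism) your proof establishes \eqref{uu0a} only for $n=3$. A secondary inaccuracy: the restriction $n\in\{3,4\}$ is not equivalent to $m<\frac12$ within the admissible range --- for $n=5$ the values $m\in\left[\frac{3}{7},\frac12\right)$ are admissible and satisfy $m<\frac12$, yet the shell term still diverges like $(\log R)^{1/2}$, because the exponent $\frac{n-4}{2}$ depends only on $n$.
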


We note that when $\frac{n-2}{n+2}\le m<\frac{n-2}{n}$,  \eqref{uu0a} implies 
\begin{equation*}
\int_{\R^n\setminus\{0\}}|\widetilde u_1- \widetilde u_2|(y,t) f_{\lambda_3}^{m \gamma_2}(y)\,dy\le \int_{\R^n\setminus\{0\}} |u_{0,1}(y)-u_{0,2}(y)| f_{e^{-\beta t} \lambda_3}^{m \gamma_2}(y)\,dy \quad \forall t>0
\end{equation*}
and the right hand side converges to zero as $t$ goes to infinity, since by Remark \ref{flambda12} $f_{\lambda}(x)$ converges to zero for any $0\ne x\in\R^n$ as $\lambda\to\infty$.

\begin{theorem}[Asymptotic behaviour of solutions]\label{thm assm}
Let $n=3,4$, $\frac{n-2}{n+2}\le m<\frac{n-2}{n}$, 
 and $\alpha$, $\beta$, $\gamma_2$ be as given by \eqref{ab} and \eqref{gamma2-defn} respectively.
 Let $\lambda_1\ge \lambda_0 \ge \lambda_2>0$, $\lambda_3>0$, and $f_{\lambda_i}$, $i=0,1,2,3$ be as given by \eqref{f-lambda-defn} with $\lambda=\lambda_0,\lambda_1,\lambda_2,\lambda_3$. 
Let $u_0$ satisfy \eqref{ini con} and 
\begin{equation}\label{u0-f0-weighted-l1}
u_0-f_{\lambda_0} \in L^1 \left(f^{m\gamma}_{\lambda_3} ; \R^n\setminus\{0\} \right).
\end{equation}
Let $u$ be the solution of \eqref{main-eq} which satisfies \eqref{Ulam1u ent} and let $\4{u}$ be given by \eqref{rescaled-soln-defn}. 
Then $\widetilde u(\cdot,t)$ converges uniformly in $C^2(K)$ for any compact subset $K$ of $\re^n \setminus \{0\}$ to $f_{\lambda_0}$ as $t\to \infty$ and 
\begin{equation}\label{u-tilde-l1-convergence}
\widetilde u(\cdot, t) \to f_{\lambda_0}\quad\mbox{ in }L^1(f^{m\gamma}_{\lambda_3};\re^n \setminus\{0\})\quad\mbox{ as }t \to \infty.
\end{equation}
\end{theorem}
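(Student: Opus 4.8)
The plan is to separate the conclusion into the weighted $L^1$-convergence \eqref{u-tilde-l1-convergence}, which follows almost directly from the contraction estimate of Theorem \ref{u-v-L1-contraction2} after rescaling, and the local $C^2$-convergence, which requires a compactness argument based on uniform interior parabolic regularity together with the $L^1$-convergence to pin down the limit. Throughout I work with the rescaled function $\4u$, which solves \eqref{main-eq-resc}; by the rescaled form of \eqref{Ulam1u ent} (note that \eqref{rescaled-soln-defn} applied to $U_\lambda$ gives $\4U_\lambda(x,t)\equiv f_\lambda(x)$) it satisfies the time-independent trapping
\[
f_{\lambda_1}(x)\le \4u(x,t)\le f_{\lambda_2}(x)\qquad\forall x\in\re^n\setminus\{0\},\ t\ge 0,
\]
so that $f_{\lambda_0}$ is a stationary solution of \eqref{main-eq-resc} lying between the same barriers (recall $\lambda_1\ge\lambda_0\ge\lambda_2$ and $\lambda\mapsto f_\lambda$ is decreasing).

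For the $L^1$-convergence I would compare $u$ with the eternal solution $U_{\lambda_0}$ of \eqref{fde}, whose initial value is $f_{\lambda_0}$ and which satisfies $U_{\lambda_1}\le U_{\lambda_0}\le U_{\lambda_2}$; thus $U_{\lambda_0}$ is admissible in Theorem \ref{u-v-L1-contraction2} with $u_{0,2}=f_{\lambda_0}$, and $\4U_{\lambda_0}\equiv f_{\lambda_0}$. Applying \eqref{uu0a} with weight parameter $e^{-\beta t}\lambda_3$ (permitted since the constant there is arbitrary) and performing the change of variables $x=e^{\beta t}y$ in \eqref{rescaled-soln-defn}, the scaling identity $f_{\lambda_3}(e^{-\beta t}x)=e^{\alpha t}f_{e^{-\beta t}\lambda_3}(x)$ (a consequence of \eqref{f-lambda-defn} and $\alpha=2\beta/(1-m)$) combined with the exact cancellation $\alpha(1+m\gamma_2)-\beta n=0$, which holds precisely for the exponent $\gamma_2$ of \eqref{gamma2-defn} since $1+m\gamma_2=\tfrac{(1-m)n}{2}$, yields
\[
\int_{\re^n\setminus\{0\}}|\4u-f_{\lambda_0}|(y,t)\,f_{\lambda_3}^{m\gamma_2}(y)\,dy
\le\int_{\re^n\setminus\{0\}}|u_0-f_{\lambda_0}|(x)\,f_{e^{-\beta t}\lambda_3}^{m\gamma_2}(x)\,dx.
\]
Since $e^{-\beta t}\lambda_3\uparrow\infty$ and $f_\lambda(x)\to0$ with $\lambda\mapsto f_\lambda$ decreasing (Remark \ref{flambda12}), for $t\ge0$ the right integrand is dominated by $|u_0-f_{\lambda_0}|f_{\lambda_3}^{m\gamma_2}\in L^1$ (hypothesis \eqref{u0-f0-weighted-l1}) and tends to $0$ pointwise, so dominated convergence gives \eqref{u-tilde-l1-convergence}.

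For the local $C^2$-convergence I would argue by compactness. On any compact $K\subset\re^n\setminus\{0\}$ the trapping bounds give $0<c_K\le\4u\le C_K<\infty$ on a neighbourhood of $K$ uniformly in $t$, so there \eqref{main-eq-resc} is uniformly parabolic with bounded lower-order drift; interior parabolic Schauder and Krylov--Safonov estimates then bound $\|\4u(\cdot,t)\|_{C^{2,\theta}(K)}$ uniformly in $t\ge1$, whence $\{\4u(\cdot,t)\}_{t\ge1}$ is precompact in $C^2(K)$. Given $t_j\to\infty$, a subsequence converges in $C^2(K)$ to some $w$; but \eqref{u-tilde-l1-convergence} forces $\4u(\cdot,t)\to f_{\lambda_0}$ in $L^1(K)$ (as $f_{\lambda_3}^{m\gamma_2}$ is bounded below on $K$), so $w=f_{\lambda_0}$. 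Since every sequence admits such a subsequence with the same limit, the whole family converges, $\4u(\cdot,t)\to f_{\lambda_0}$ in $C^2(K)$. Note that this route identifies the limit directly from the $L^1$-convergence, so one need not separately show that subsequential limits solve \eqref{ode}.

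The main obstacle is the regularity input for the compactness step: producing interior H\"older $C^2$ bounds for the singular equation \eqref{main-eq-resc} that are \emph{uniform} as $t\to\infty$. This is exactly where the time-independent two-sided barrier $f_{\lambda_1}\le\4u\le f_{\lambda_2}$ is indispensable, since it renders the equation uniformly (non-degenerately) parabolic on compact subsets away from the origin and thereby legitimizes the standard estimates with time-independent constants. Once these uniform estimates are secured, the remaining steps—the rescaling of the contraction and the identification of subsequential limits via the already-established weighted $L^1$-convergence—are routine.
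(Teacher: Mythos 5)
Your proposal is correct and follows essentially the same route as the paper: you rescale the weighted $L^1$-contraction of Theorem \ref{u-v-L1-contraction2} exactly as in the paper's Corollary \ref{L1 con cor} (same scaling identity $f_{\lambda_3}(e^{-\beta t}x)=e^{\alpha t}f_{e^{-\beta t}\lambda_3}(x)$ and exponent cancellation), pass to the limit by dominated convergence using $f_{e^{-\beta t}\lambda_3}\to 0$ together with monotonicity in $\lambda$, and obtain local $C^2$-convergence from the time-independent trapping $f_{\lambda_1}\le\widetilde{u}\le f_{\lambda_2}$, uniform interior parabolic estimates, and Ascoli's theorem. The only difference is the (harmless) ordering: you establish the weighted $L^1$-convergence \eqref{u-tilde-l1-convergence} first and use it to identify all subsequential $C^2$-limits, whereas the paper first extracts a subsequential limit $\widetilde{u}(\cdot,t_i)\to v_0$, identifies $v_0\equiv f_{\lambda_0}$ from the contraction inequality evaluated at $t=t_i$, and only then deduces \eqref{u-tilde-l1-convergence}.
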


\begin{theorem}[Asymptotic behaviour of radially symmetric solutions]\label{thm assm rad sym}
Let
\begin{equation}\label{m-n-relation2}
3\le n<8\quad\mbox{ and }\quad 1-\sqrt{\frac{2}{n}}\le m<\min\left(\frac{2(n-2)}{3n},\frac{n-2}{n+2}\right),
\end{equation}
and $\alpha$, $\beta$, $\4{\alpha}$, $\4{\beta}$ be as given by \eqref{ab} and \eqref{tilde ab} respectively and 
\begin{equation}\label{gamma3-defn}
\gamma_3:=\frac{1}{m}\left(\frac{n\4{\beta}}{\4{\alpha}}-1\right)=\frac{n(1-m)}{n-2-nm}-\frac{1}{m}
\end{equation}
Let $\lambda_1\ge \lambda_0 \ge \lambda_2>0$, $\lambda_3>0$, and $f_{\lambda_i}$, $i=0,1,2,3$ be as given by \eqref{f-lambda-defn} with $\lambda=\lambda_0,\lambda_1,\lambda_2,\lambda_3$. Let $u_0$ be a radially symmetric function that satisfy \eqref{ini con} and 
\begin{equation}\label{u0-f0-weighted-l1-new}
u_0-f_{\lambda_0} \in L^1 \left(|x|^{\frac{n-2}{m} +(n-2) \gamma_3 -2n} f^{m\gamma_3}_{\lambda_3} ; \re^n \setminus \{0\} \right).
\end{equation}
Let $u$ be the solution of \eqref{main-eq} which satisfies \eqref{Ulam1u ent} and let $\4{u}$ be given by \eqref{rescaled-soln-defn}.  Then $\widetilde u(\cdot,t)$ converges uniformly in $C^2(K)$ for any any compact subset $K\subset \re^n \setminus \{0\}$ to $f_{\lambda_0}$ 
as $t\to \infty$ and 
\begin{equation}\label{u-tilde-l1-convergence2}
\widetilde u(\cdot, t) \to f_{\lambda_0}\quad\mbox{ in }L^1(|x|^{\frac{n-2}{m}+(n-2)\gamma_3 -2n} f_{\lambda_3}^{m \gamma_3} ; \re^n \setminus \{0\})\quad\mbox{ as }t \to \infty.
\end{equation}
\end{theorem}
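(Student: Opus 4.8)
The plan is to reduce Theorem~\ref{thm assm rad sym} to the asymptotic analysis of the inverted equation \eqref{Inversed eq} and to reproduce, in the inverted variables, the mechanism behind Theorem~\ref{thm assm}. Since $u$ is radially symmetric, the function $\2u$ of \eqref{u-bar-defn} solves \eqref{Inversed eq}, and by \eqref{def g lam} and \eqref{U-lambda-U-bar-eqn} the profiles $f_{\lambda_i}$ and barriers $U_{\lambda_i}$ become $g_{\lambda_i}$ and $\2U_{\lambda_i}$. Because $r\mapsto r^{-1}$ is an involution, the hypotheses transfer cleanly: \eqref{ini con} becomes $g_{\lambda_1}\le\2u_0\le g_{\lambda_2}$, \eqref{Ulam1u ent} becomes $\2U_{\lambda_1}\le\2u\le\2U_{\lambda_2}$, and a direct change of variables $s=r^{-1}$ shows that the weight $|x|^{\frac{n-2}{m}+(n-2)\gamma_3-2n}f_{\lambda_3}^{m\gamma_3}$ in \eqref{u0-f0-weighted-l1-new} is exactly the image of $g_{\lambda_3}^{m\gamma_3}$ under inversion, so \eqref{u0-f0-weighted-l1-new} says precisely that $\2u_0-g_{\lambda_0}\in L^1(g_{\lambda_3}^{m\gamma_3};\R^n\setminus\{0\})$. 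Moreover the rescaled function $\4{\2u}(r,t)=e^{\4\alpha t}\2u(e^{\4\beta t}r,t)$ satisfies $\4{\2u}(r,t)=r^{-\frac{n-2}{m}}\4u(r^{-1},t)$, so every convergence statement for $\4u$ is equivalent to the corresponding one for $\4{\2u}$.

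The heart of the matter is a weighted $L^1$-contraction for \eqref{Inversed eq}: for two radial solutions $\2u_1,\2u_2$ subject to the barrier bounds,
\begin{equation*}
\int_{\R^n\setminus\{0\}}|\2u_1-\2u_2|(x,t)\,g_{\lambda_3}^{m\gamma_3}(x)\,dx\le\int_{\R^n\setminus\{0\}}|\2u_{0,1}-\2u_{0,2}|(x)\,g_{\lambda_3}^{m\gamma_3}(x)\,dx\qquad\forall t>0.
\end{equation*}
I would prove this by differentiating the left side, using $\operatorname{sgn}(\2u_1-\2u_2)=\operatorname{sgn}(\2u_1^m-\2u_2^m)$ and Kato's inequality, and integrating by parts twice against the diffusion coefficient $\tfrac{n-1}{m}|x|^{n+2-\frac{n-2}{m}}$; the monotonicity then reduces to the superharmonicity condition
\begin{equation*}
\Delta\!\left(|x|^{\,n+2-\frac{n-2}{m}}\,g_{\lambda_3}^{m\gamma_3}\right)\le0\qquad\text{in }\R^n\setminus\{0\}.
\end{equation*}
Verifying this over the entire range \eqref{m-n-relation2} is the step I expect to be the main obstacle. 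Writing the quantity as $r^{\,n+2-\frac{n-2}{m}-(n-2)\gamma_3}f_{\lambda_3}^{m\gamma_3}(r^{-1})$ and applying the radial Laplacian, one must control the sign uniformly in $r$ using the equation \eqref{ode}, the precise near-origin asymptotics of $f_{\lambda_3}$ from Theorem~\ref{Thm HAB}, and the decay \eqref{gro inf}. The constraints $3\le n<8$ and $1-\sqrt{2/n}\le m<\min(2(n-2)/(3n),(n-2)/(n+2))$, together with the exact exponent $\gamma_3$ in \eqref{gamma3-defn}, are what force the leading terms to have the correct sign in every regime; justifying the vanishing of the boundary terms in the two integrations by parts, as $r\to0$ and $r\to\infty$, from the two-sided barrier bounds is a further technical point.

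Granting the contraction, decay follows by rescaling. Changing variables $x=e^{\4\beta t}y$ and using $g_\lambda(r)=\lambda^{-\gamma_1}g_1(r/\lambda)$ together with $\4\alpha/\4\beta=\gamma_1$ from \eqref{alpha-beta-and-tilde-relation}, the accumulated scaling exponent $\4\alpha+\4\beta\gamma_1 m\gamma_3-n\4\beta$ vanishes by the choice \eqref{gamma3-defn}, giving
\begin{equation*}
\int|\4{\2u}_1-\4{\2u}_2|(y,t)\,g_{\lambda_3}^{m\gamma_3}(y)\,dy=\int|\2u_1-\2u_2|(x,t)\,g_{\lambda_3e^{\4\beta t}}^{m\gamma_3}(x)\,dx.
\end{equation*}
Taking $\2u_2=\2U_{\lambda_0}$, so that $\4{\2u}_2\equiv g_{\lambda_0}$, and combining with the contraction yields
\begin{equation*}
\int_{\R^n\setminus\{0\}}|\4{\2u}(\cdot,t)-g_{\lambda_0}|\,g_{\lambda_3}^{m\gamma_3}\,dy\le\int_{\R^n\setminus\{0\}}|\2u_0-g_{\lambda_0}|\,g_{\lambda_3e^{\4\beta t}}^{m\gamma_3}\,dx.
\end{equation*}
Since $\4\beta>0$ we have $\lambda_3e^{\4\beta t}\to\infty$, and as $g_\lambda$ is nonincreasing in $\lambda$ with $g_\lambda\to0$ pointwise as $\lambda\to\infty$, dominated convergence (with majorant $|\2u_0-g_{\lambda_0}|\,g_{\lambda_3}^{m\gamma_3}\in L^1$) forces the right side to $0$. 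Hence $\4{\2u}(\cdot,t)\to g_{\lambda_0}$ in $L^1(g_{\lambda_3}^{m\gamma_3};\R^n\setminus\{0\})$.

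Finally I would upgrade to local $C^2$ convergence and invert. The barrier bound gives $g_{\lambda_1}\le\4{\2u}(\cdot,t)\le g_{\lambda_2}$, so on each compact subset of $\R^n\setminus\{0\}$ the rescaled version of \eqref{Inversed eq} is uniformly parabolic with $\4{\2u}$ bounded above and away from $0$; interior parabolic estimates then give uniform $C^{2,\theta}_{\loc}$ bounds for $t\ge1$. Along any sequence $t_j\to\infty$, Arzel\`a--Ascoli extracts a $C^2_{\loc}$ limit which, by the $L^1$ convergence just proved, must equal $g_{\lambda_0}$; as every subsequential limit coincides, $\4{\2u}(\cdot,t)\to g_{\lambda_0}$ in $C^2_{\loc}$. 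Since inversion is a smooth diffeomorphism of $\R^n\setminus\{0\}$ and $\4{\2u}(r,t)=r^{-\frac{n-2}{m}}\4u(r^{-1},t)$, this is equivalent to $\4u(\cdot,t)\to f_{\lambda_0}$ in $C^2_{\loc}$, while the weighted $L^1$ convergence \eqref{u-tilde-l1-convergence2} is the inversion image of the convergence of $\4{\2u}$ in $L^1(g_{\lambda_3}^{m\gamma_3})$, completing the proof.
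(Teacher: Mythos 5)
Your strategy coincides with the paper's: invert via \eqref{u-bar-defn} to the equation \eqref{Inversed eq}, prove a weighted $L^1$-contraction there with weight $g_{\lambda_3}^{m\gamma_3}$ (this is the paper's Proposition \ref{u12-bar-g-lambda-weight-l1-prop}), transfer it back through the change of variables and rescale using the exponent identity forced by \eqref{gamma3-defn} (the paper's Corollary \ref{u12-difference-weighted-l1-cor}), and conclude by dominated convergence plus parabolic estimates and Ascoli exactly as in Theorem \ref{thm assm}. Your identification of $|x|^{\frac{n-2}{m}+(n-2)\gamma_3-2n}f_{\lambda_3}^{m\gamma_3}$ as the inversion image of $g_{\lambda_3}^{m\gamma_3}$, the vanishing of the accumulated scaling exponent, the relation $\4{\2{u}}(r,t)=r^{-\frac{n-2}{m}}\4{u}(r^{-1},t)$, and the compactness/uniqueness-of-limits endgame are all correct and match the paper.

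However, there is a genuine gap at exactly the step you flag as ``the main obstacle'': the inequality $\Delta\bigl(|x|^{n+2-\frac{n-2}{m}}g_{\lambda_3}^{m\gamma_3}\bigr)\le 0$ is the crux of the theorem, and the route you sketch for it --- controlling the sign via the near-origin expansion of Theorem \ref{Thm HAB} and the decay \eqref{gro inf} --- cannot succeed, because endpoint asymptotics determine the sign only for $r$ small and $r$ large and say nothing at intermediate radii; there is no mechanism that propagates the sign inward. The paper's verification (see \eqref{del rg n}) is instead purely algebraic and needs no asymptotics: the hypotheses \eqref{m-n-relation2} together with \eqref{gamma3-defn} give $0<\gamma_3\le 1$, so with $h_1=g_{\lambda_3}^m$ the term $\gamma_3(\gamma_3-1)h_1^{\gamma_3-2}|\nabla h_1|^2$ is nonpositive, the term $\gamma_3 h_1^{\gamma_3-1}\Delta h_1$ is negative by \eqref{gm-superharmonic}, and the remaining first-order term is controlled by the monotonicity \eqref{g-monotone-expression}, i.e.\ $rg_{\lambda_3,r}>-\frac{\4{\alpha}}{\4{\beta}}g_{\lambda_3}$; what is left is the bracket
\begin{equation*}
\left(\frac{n-2}{m}-2n\right)+2m\gamma_3\,\frac{\4{\alpha}}{\4{\beta}}
=\frac{4}{1-m}-\frac{n-2}{m}
=\frac{(n+2)m-(n-2)}{m(1-m)}<0,
\end{equation*}
which is negative precisely because $m<\frac{n-2}{n+2}$. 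Both structural inputs \eqref{g-monotone-expression} and \eqref{gm-superharmonic} are already available from Theorem \ref{g-existence-thm}, so the ``main obstacle'' is a few lines once one abandons the asymptotic route.

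A second omission: you never locate where $m<\frac{2(n-2)}{3n}$ (and hence $n<8$, which is needed for that interval of $m$ to be nonempty) is consumed. It is not used in the superharmonicity at all; it enters in the cutoff error terms of the contraction, which you dismiss as ``a further technical point.'' The outer error term is of size $(\log R)^{\frac{m}{1-m}(1+\gamma_3)-1}$, and it tends to $0$ as $R\to\infty$ if and only if $\frac{m}{1-m}(1+\gamma_3)<1$, which is equivalent to $m<\frac{2(n-2)}{3n}$; the inner error term is of size $\3^{2n-\frac{n-2}{m}}$ and needs $m>\frac{n-2}{2n}$, which is guaranteed by $m\ge 1-\sqrt{2/n}$. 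Since this is exactly where the remaining hypotheses of \eqref{m-n-relation2} do their work, the proof is not complete until these estimates are carried out; without them the contraction, and hence the whole argument, fails.
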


\section{Existence and uniqueness of radially symmetric eternal self-similar solutions}\label{section-eternal-self-similar-soln}
\setcounter{equation}{0}
\setcounter{theorem}{0}

In this section, by using the inversion method of K.M.~Hui and Soojung Kim \cite{HK}, we will prove the existence of solution $f$ of \eqref{ode} which blow up at the origin with $\alpha$, $\beta$ as given by \eqref{ab}. We will also prove the growth estimates \eqref{gro 0} and \eqref{gro inf} for such solution $f$ and the uniqueness of such solution.
We first recall a result of \cite{HK}.

\begin{theorem}[Lemma 2.1 and Theorem 2.4 of \cite{HK}]\label{g-existence-thm} 
Let $n \ge 3$, $0<m<\frac{n-2}{n}$, $\widetilde \alpha>0,$ $\widetilde \beta>0,$ $\frac{\widetilde \alpha}{\widetilde \beta}\le \frac{n-2}{m}$, and $\eta>0.$
\begin{enumerate}[(a)] 
\item If $0<m<\frac{n-2}{n+1}$, then there exists a unique solution $g\in C^1([0,\infty);\R)\cap C^2((0,\infty);\R)$ of \eqref{inv eq} which satisfies 
\begin{equation}\label{g-at-x=0}
g(0)=\eta\quad\mbox{ and }\quad g_r(0)=0. 
\end{equation}
\item If $\frac{n-2}{n+1}\leq m< \frac{n-2}{n}$, then there exists a unique solution $g\in C^{0, \delta_0}([0,\infty);\R)\cap C^2((0,\infty);\R)$ of \eqref{inv eq} which satisfies 
\begin{equation}\label{eq-initial-m-large}
g(0)=\eta\quad \mbox{ and } \quad \lim_{r\to 0^+}r^{\delta_1} g_r(r)=-\frac{\widetilde\alpha \eta^{2-m}}{n-2-2m},
\end{equation} 
where 
\begin{equation}\label{defn-delta0-1}
\delta_1=1-\frac{n-2-nm}{m}\in[0,1)\quad\mbox{ and }\quad\delta_0=\frac{1-\delta_1}{2}=\frac{n-2-nm}{2m}\in(0,1/2].
\end{equation} 
Moreover 
\begin{equation}\label{g-monotone-expression}
g(r)+\frac{\widetilde \beta}{\widetilde \alpha} rg_r(r) >0 \quad\forall r>0
\end{equation}
and hence
\begin{equation}\label{gm-superharmonic}
(g^m)_{rr}+\frac{n-1}{r}(g^m)_r<0\quad\forall r>0.
\end{equation}
\end{enumerate}
\end{theorem}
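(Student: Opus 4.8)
The statement combines an existence--uniqueness assertion for the singular ODE \eqref{inv eq} with prescribed behaviour at the origin, together with the monotonicity \eqref{g-monotone-expression}. I would treat \eqref{inv eq} as an initial value problem for a second order ODE that degenerates at $r=0$, and argue by a fixed point argument near the origin followed by continuation, the two cases (a) and (b) being separated by the sign of the exponent $\delta_1$ in \eqref{defn-delta0-1}. This is essentially the scheme of Lemma 2.1 and Theorem 2.4 of \cite{HK}, which I now outline.

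\emph{Reduction to an integral equation.} Writing $v=g^m$ and using the identity $(g^m)_{rr}+\tfrac{n-1}{r}(g^m)_r=r^{-(n-1)}(r^{n-1}v_r)_r$, equation \eqref{inv eq} becomes
\begin{equation*}
\frac{n-1}{m}\,\big(r^{n-1}v_r\big)_r=-\,r^{\,n-1+\sigma}\big(\widetilde\alpha g+\widetilde\beta rg_r\big),\qquad \sigma:=\frac{n-2}{m}-n-2 .
\end{equation*}
Since $0<m<\frac{n-2}{n}$ forces $\sigma>-2$, the weight $s^{\,n-1+\sigma}$ is integrable near $0$, so I would integrate twice from $0$ under the normalization $g(0)=\eta$ to obtain a closed integral equation $g=\mathcal Tg$. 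A direct inspection of the leading term shows $v_r\sim r^{-\delta_1}$, hence $g_r\sim r^{-\delta_1}$, with $\delta_1$ as in \eqref{defn-delta0-1}. When $0<m<\frac{n-2}{n+1}$ one has $\delta_1<0$, so $g_r(r)\to0$ and the solution is $C^1$ up to $0$ with $g_r(0)=0$; this is case (a) and \eqref{g-at-x=0}. When $\frac{n-2}{n+1}\le m<\frac{n-2}{n}$ one has $\delta_1\in[0,1)$, so $g_r$ is unbounded but $r^{\delta_1}g_r$ has the finite limit recorded in \eqref{eq-initial-m-large}, while $g-\eta=O(r^{1-\delta_1})$ gives the Hölder regularity $C^{0,\delta_0}$ of case (b).

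\emph{Local existence and uniqueness.} I would solve $g=\mathcal Tg$ on a short interval $[0,r_0]$ in the closed ball $\{\,|g-\eta|\le \eta/2\,\}$ of the appropriate Banach space: $C^1([0,r_0])$ with $g_r(0)=0$ in case (a), and a weighted space whose norm controls $\sup_r|r^{\delta_1}g_r|$ together with the $C^{0,\delta_0}$ seminorm in case (b). On this ball $g$ is bounded away from $0$, so the nonlinearities $g\mapsto g^m$ and $w\mapsto w^{1/m}$ are Lipschitz, and the integrable weight makes the Lipschitz constant of $\mathcal T$ of size $O(r_0^{\,\kappa})$ for some $\kappa>0$. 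Hence for $r_0$ small $\mathcal T$ is a contraction, and Banach's fixed point theorem yields a unique local solution with the stated regularity and initial behaviour.

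\emph{Continuation, global solution and monotonicity.} On any interval where $g>0$ the equation \eqref{inv eq} is a regular nondegenerate second order ODE, so the local solution extends and is unique on $(0,\infty)$; together with uniqueness of the piece at the origin this gives global uniqueness. The remaining point, which I expect to be the main obstacle, is the monotonicity \eqref{g-monotone-expression}, i.e. positivity of $P(r):=\widetilde\alpha g+\widetilde\beta rg_r$. Note first that \eqref{gm-superharmonic} is then immediate, since \eqref{inv eq} reads $(g^m)_{rr}+\tfrac{n-1}{r}(g^m)_r=-\tfrac{m}{n-1}\,r^{\sigma}P$. To prove $P>0$ I would differentiate \eqref{inv eq} to produce a linear second order ODE satisfied by $P$ (equivalently, $-P$ is, up to the self-similar time factor, the time derivative $\partial_t\overline U$ of the eternal solution and solves the linearisation of the flow), arrange the sign of its zeroth order coefficient, and invoke the maximum principle: since $P(0^+)=\widetilde\alpha\eta>0$ by \eqref{g-at-x=0}/\eqref{eq-initial-m-large} and $P$ cannot attain a nonpositive interior minimum, $P>0$ on $(0,\infty)$. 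Finally, the bounds furnished by $P>0$ and \eqref{gm-superharmonic} keep $g$ positive and bounded, precluding any finite-$r$ singularity and yielding the global solution asserted in Theorem \ref{g-existence-thm}.
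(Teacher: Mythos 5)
The paper itself contains no proof of Theorem \ref{g-existence-thm}: it is imported verbatim as ``Lemma 2.1 and Theorem 2.4 of \cite{HK}'' and used as a black box, so your proposal can only be measured against the argument of \cite{HK}, which it reconstructs in outline. The existence--uniqueness architecture you describe is sound and is the standard one: integrate $\frac{n-1}{m}\left(r^{n-1}(g^m)_r\right)_r=-r^{n-1+\sigma}\left(\widetilde\alpha g+\widetilde\beta rg_r\right)$ (your $\sigma=\frac{n-2}{m}-n-2$) twice from the origin, run a contraction on the ball $|g-\eta|\le\eta/2$ of a weighted $C^1$-type space, read off the dichotomy (a)/(b) from the sign of $\delta_1$, and continue globally using the a priori bounds furnished by \eqref{g-monotone-expression} and \eqref{gm-superharmonic} ($r^{\widetilde\alpha/\widetilde\beta}g$ increasing and $g$ decreasing, so $g$ stays pinched between positive bounds on compact sets).

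The step that would fail as written is the one you flag as the main obstacle, the positivity of $P:=\widetilde\alpha g+\widetilde\beta rg_r$. Differentiating \eqref{inv eq} does produce a linear second-order equation for $P$ (the linearization of the flow, as you say), but its zeroth-order coefficient involves $\Delta(g^{m-1})$, whose sign is not under control, and the ``no nonpositive interior minimum'' argument does not close: at a putative first zero $r_0$ of $P$ one only gets $P(r_0)=0$, $P_r(r_0)\le 0$ and, from the second-order equation, $P_{rr}(r_0)\ge 0$ --- no contradiction (and on $(0,\infty)$ one would also have to exclude an infimum escaping to $r=\infty$). The fix is to not differentiate at all: substituting $g_r=(P-\widetilde\alpha g)/(\widetilde\beta r)$ into the once-integrated equation above exhibits a \emph{first-order} ODE for $P$ (Riccati type, coefficients depending on $g$). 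Evaluating it at a first zero $r_0$, where $rg_r=-\frac{\widetilde\alpha}{\widetilde\beta}g$, gives
\begin{equation*}
r_0\,P_r(r_0)=\widetilde\alpha\, g(r_0)\left(n-2-\frac{m\widetilde\alpha}{\widetilde\beta}\right)\ge 0,
\end{equation*}
which contradicts $P_r(r_0)\le 0$ outright when $\frac{\widetilde\alpha}{\widetilde\beta}<\frac{n-2}{m}$; in the borderline case $\frac{\widetilde\alpha}{\widetilde\beta}=\frac{n-2}{m}$ (allowed by the hypotheses, though never needed in this paper by \eqref{alpha-beta-and-tilde-relation}) one gets $P_r(r_0)=0$, and uniqueness for the coupled first-order system in $(g,P)$ forces $g\equiv g(r_0)(r/r_0)^{-\frac{n-2}{m}}$ and $P\equiv 0$, contradicting $g(0)=\eta<\infty$. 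This is exactly where the hypothesis $\frac{\widetilde\alpha}{\widetilde\beta}\le\frac{n-2}{m}$ enters, which your maximum-principle phrasing hides. Your parabolic identification of $-P$ with $\partial_t\overline U$ is correct but cannot give the sign by itself: by self-similarity the sign of $\partial_t\overline U$ at every time \emph{is} the sign of $P$, so any comparison argument is circular --- the ODE argument is genuinely needed.

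One further check you should make: carried to the constant, your leading-order computation for \eqref{inv eq} as normalized in this paper (with the prefactor $\frac{n-1}{m}$) yields $\lim_{r\to 0^+}r^{\delta_1}g_r=-\frac{m\widetilde\alpha\eta^{2-m}}{(n-1)(n-2-2m)}$, not the constant displayed in \eqref{eq-initial-m-large}; the displayed constant belongs to the normalization of \cite{HK}, where the equation carries no $\frac{n-1}{m}$. So either the equation or the stated limit must be rescaled before your fixed-point argument can literally reproduce \eqref{eq-initial-m-large}.
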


Unless otherwise stated, we will use $g=g_{\widetilde \beta, \eta} (r)$ to denote the unique solution of 
\begin{align}\label{g-eqn}
\left\{\begin{aligned}
&\frac{n-1}{m}\left((g^m)_{rr}+ \frac{n-1}{r}(g^m)_r \right)+ r^{\frac{n-2}{m} -n-2}\left ( \widetilde \alpha g+\widetilde \beta r g_r \right )=0, \quad g>0, \quad\mbox{ in }(0,\infty),\\
&g(0)=\eta,\end{aligned}\right.
\end{align}
for some constant $\eta>0$ given by Theorem \ref{g-existence-thm} and let
\begin{equation}\label{w-defn}
w(r):=r^{(1-m)\frac{\widetilde \alpha}{\widetilde \beta}}g(r)^{1-m}.
\end{equation}
By abuse of notation, we will also let $g(x)$ denote the radially symmetric function on $\R^n\setminus\{0\}$ with value $g(|x|)$.

\begin{lem}\label{h1 w}
$$ w_r(r)>0 \quad \forall r>0.$$
\end{lem}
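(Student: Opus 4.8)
The plan is to reduce the inequality $w_r>0$ to a pointwise monotonicity statement for $g$ and then to establish that statement by a first-zero (maximum-principle) argument driven directly by the equation \eqref{g-eqn}. Writing $\kappa:=(1-m)\frac{\widetilde\alpha}{\widetilde\beta}$, so that $w(r)=r^{\kappa}g(r)^{1-m}$ by \eqref{w-defn}, a single differentiation gives
\[
w_r(r)=\frac{1-m}{\widetilde\beta}\,r^{\kappa-1}g(r)^{-m}\bigl(\widetilde\alpha g(r)+\widetilde\beta r g_r(r)\bigr).
\]
Since $r>0$, $g>0$, and $\frac{1-m}{\widetilde\beta}>0$ by \eqref{alpha-beta-and-tilde-relation}, the claim $w_r>0$ is equivalent to $Q(r):=\widetilde\alpha g(r)+\widetilde\beta r g_r(r)>0$ for all $r>0$, which is a positive multiple of the monotonicity in \eqref{g-monotone-expression}. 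Hence for $\frac{n-2}{n+1}\le m<\frac{n-2}{n}$ the lemma is immediate from Theorem \ref{g-existence-thm}(b), and the real work is to prove $Q>0$ in the regime $0<m<\frac{n-2}{n+1}$; in fact the argument below proves it uniformly in all cases.

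First I would check that $Q>0$ near the origin so that a first zero, if any, is interior. In case (a) we have $g\in C^1$ with $g_r(0)=0$ by \eqref{g-at-x=0}, so $r g_r(r)\to0$ and $Q(r)\to\widetilde\alpha\eta>0$ as $r\to0^+$; in case (b) the singular boundary behaviour \eqref{eq-initial-m-large} combined with $\delta_1\in[0,1)$ from \eqref{defn-delta0-1} still forces $r g_r(r)\to0$, so again $Q(r)\to\widetilde\alpha\eta>0$. Suppose, for contradiction, that $Q$ is not everywhere positive and let $r_0>0$ be its first zero, so that $Q>0$ on $(0,r_0)$ and $Q(r_0)=0$. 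Then $g_r(r_0)=-\frac{\widetilde\alpha}{\widetilde\beta r_0}g(r_0)<0$. Expanding the radial operator in \eqref{g-eqn} and using $Q(r_0)=0$ to annihilate the zeroth-order term $r^{\frac{n-2}{m}-n-2}Q$ yields
\[
g_{rr}(r_0)=(1-m)\frac{g_r(r_0)^2}{g(r_0)}-\frac{n-1}{r_0}g_r(r_0),
\]
and inserting the value of $g_r(r_0)$ into $Q_r=(\widetilde\alpha+\widetilde\beta)g_r+\widetilde\beta r g_{rr}$ collapses, after simplification, to
\[
Q_r(r_0)=\frac{\widetilde\alpha\,g(r_0)}{r_0}\left(n-2-m\frac{\widetilde\alpha}{\widetilde\beta}\right).
\]
By \eqref{alpha-beta-and-tilde-relation} we have $\frac{\widetilde\alpha}{\widetilde\beta}<\frac{n-2}{m}$, so the factor $n-2-m\frac{\widetilde\alpha}{\widetilde\beta}$ is strictly positive and therefore $Q_r(r_0)>0$. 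This contradicts the fact that $Q$ decreases to $0$ at its first zero (which forces $Q_r(r_0)\le0$); hence $Q>0$ for all $r>0$, and consequently $w_r>0$.

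The hard part, I expect, is entirely concentrated in the first-zero step: one must verify that the sign of $Q$ enters $g_{rr}$ only through the term $r^{\frac{n-2}{m}-n-2}Q$ in \eqref{g-eqn}, so that evaluating at $r_0$ produces the clean formula for $g_{rr}(r_0)$, and then that the algebra really does reduce to the single factor $n-2-m\frac{\widetilde\alpha}{\widetilde\beta}$, whose positivity is exactly \eqref{alpha-beta-and-tilde-relation}. The other delicate point is the boundary analysis in case (b), where $g_r$ blows up at $r=0$; here the bound $\delta_1<1$ is precisely what guarantees $r g_r\to0$, so that $Q$ extends continuously to the origin with value $\widetilde\alpha\eta>0$ and the first-zero argument can be launched. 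Everything else is routine differentiation.
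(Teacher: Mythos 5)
Your proposal is correct; I checked the key computation — at a first zero $r_0$ of $Q=\widetilde\alpha g+\widetilde\beta rg_r$ one indeed gets $Q_r(r_0)=\frac{\widetilde\alpha g(r_0)}{r_0}\left(n-2-m\frac{\widetilde\alpha}{\widetilde\beta}\right)$, and $n-2-m\frac{\widetilde\alpha}{\widetilde\beta}=\frac{2m}{1-m}>0$ by \eqref{alpha-beta-and-tilde-relation} — but it takes a genuinely different route from the paper's. Both proofs start with the same differentiation of \eqref{w-defn}, reducing the lemma to the positivity of $g+\frac{\widetilde\beta}{\widetilde\alpha}rg_r$; at that point the paper simply invokes \eqref{g-monotone-expression}, the monotonicity statement imported from Lemma 2.1 and Theorem 2.4 of \cite{HK} via Theorem \ref{g-existence-thm}, and is done in one line. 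You instead reprove \eqref{g-monotone-expression} from scratch: positivity of $Q$ near $r=0$ (from \eqref{g-at-x=0} in case (a), and from \eqref{eq-initial-m-large} together with $\delta_1<1$ in case (b)), followed by a first-zero argument in which the ODE \eqref{g-eqn} annihilates the zeroth-order term at $r_0$ and forces $Q_r(r_0)>0$, contradicting $Q_r(r_0)\le 0$. What your route buys: it is self-contained (independent of the monotonicity part of the quoted Hui--Kim theorem), and it covers case (a) explicitly — a real point, since as literally typeset \eqref{g-monotone-expression} is asserted only inside part (b) of Theorem \ref{g-existence-thm}, even though the paper's proof of this lemma uses it for all $0<m<\frac{n-2}{n}$; your argument closes that expository gap. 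What the paper's route buys is brevity, since the positivity is already available by citation. One caveat: your contradiction needs the strict inequality $\frac{\widetilde\alpha}{\widetilde\beta}<\frac{n-2}{m}$, which holds in the standing setting \eqref{alpha-beta-and-tilde-relation} but not under the weaker hypothesis $\frac{\widetilde\alpha}{\widetilde\beta}\le\frac{n-2}{m}$ admitted by Theorem \ref{g-existence-thm}; in the borderline case your computation gives $Q_r(r_0)=0$ and the argument would need refinement, but since the lemma lives in the standing setting this does not affect its validity here.
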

\begin{proof}
By \eqref{g-monotone-expression},
\begin{equation}\label{w'-eqn}
w_r(r)=(1-m)\frac{\widetilde \alpha}{\widetilde \beta} r^{\frac{\widetilde \alpha}{\widetilde \beta}(1-m)-1}g(r)^{-m}\left(g(r)+\frac{\widetilde \beta}{\widetilde \alpha} rg_r(r)\right)>0 \quad \forall r>0
\end{equation}
and the lemma follows.
\end{proof}

Let $q(r)=r^{\widetilde \alpha/\widetilde \beta}g(r)$, $s=\log r$, and $\widetilde q(s)=q(r)$. Then
$$
r^{2}(q^m)_{rr}+\left(n-1-\frac{2m\widetilde \alpha}{\widetilde \beta}\right) r(q^m)_r-\frac{m\widetilde \alpha}{\widetilde \beta}\left(n-2-\frac{m\widetilde \alpha}{\widetilde \beta}\right)q^m+\frac{m\widetilde \beta}{n-1} r q_r=0
\quad\forall r>0
$$
and
\begin{equation}\label{tl q ms}
(\widetilde q^m)_{ss}+\left(n-2-\frac{2m\widetilde \alpha}{\widetilde \beta}\right)(\widetilde q^m)_s-\frac{m\widetilde \alpha}{\widetilde \beta}\left(n-2-\frac{m\widetilde \alpha}{\widetilde \beta}\right)\widetilde q^m+\frac{m\widetilde \beta}{n-1} \widetilde q_s=0\quad\mbox{ in }\R.
\end{equation}
Let $\widetilde w(s)=\widetilde q^{1-m}(s)$. Then $w(r)=\widetilde w(s)$ and
\begin{equation}\label{ws}
\widetilde w_{ss}= \frac{1-2m}{1-m}\cdot \frac{\widetilde w_s^2}{\widetilde w}-\frac{(n+2)m-(n-2)}{1-m}\widetilde w_s+\frac{2(n-2-nm)}{1-m}\widetilde w-\frac{\widetilde \beta}{n-1} \widetilde w \widetilde w_s\quad\mbox{ in }\R
\end{equation}
 Hence
\begin{equation}\label{wr}
w_{rr}+\left(n-1-\frac{2m\widetilde \alpha}{\widetilde \beta}\right) \frac{w_r}{r}-\frac{1-2m}{1-m} \frac{w_r^2}{w}+\frac{\widetilde \beta}{n-1} \frac{w w_r}{r}
-\frac{2(n-2-nm)}{1-m}\frac{ w}{r^2}=0\quad\forall r>0.
\end{equation}
Let
\begin{equation}\label{b0b1}
b_0=\left(n-2-\frac{2\widetilde\alpha m}{\widetilde\beta}\right)=\frac{(n+2)m-(n-2)}{1-m}\quad \mbox{ and }\quad b_1=\frac{2(n-2-nm)}{1-m}.
\end{equation}

Since \eqref{ws} and \eqref{wr} are of the same form as (3.7) and (3.8) of \cite{Hs1}, by the same argument as the proof of Lemma 3.2 of \cite{Hs1} but with \eqref{ws} and \eqref{wr} replacing (3.7), (3.8) in the proof there, the following result is obtained. We briefly introduce the proof of the following lemma, for the reader's convenience.

\begin{lem}\label{c123}
There exist positive constants $C_1, C_2$ and $C_3$ such that
\begin{equation}\label{rwrc1}
\frac{rw_r(r)}{w(r)} \le C_1, \quad \forall r \ge0
\end{equation}
and
\begin{equation}\label{c2rwr}
C_2\le rw_r(r) \le C_3, \quad \forall r \ge 1.
\end{equation}
Moreover
\begin{equation}\label{wti}
w(r) \to \infty \quad \text{ as } r \to \infty.
\end{equation}
\end{lem}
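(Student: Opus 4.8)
The plan is to follow the proof of Lemma 3.2 of \cite{Hs1}, working in the variable $s=\log r$ with the scalar equation \eqref{ws} for $\widetilde w(s)=w(r)$. By Lemma \ref{h1 w}, $\widetilde w_s=rw_r>0$, so $\widetilde w$ is strictly increasing. First I would introduce the logarithmic derivative $\phi:=\widetilde w_s/\widetilde w=rw_r/w>0$; using $\widetilde w_{ss}=(\phi_s+\phi^2)\widetilde w$ and dividing \eqref{ws} by $\widetilde w$ yields the Riccati equation
\[
\phi_s=-\frac{m}{1-m}\,\phi^2-b_0\phi+b_1-\frac{\widetilde\beta}{n-1}\,\phi\,\widetilde w ,
\]
and, since the last term is negative, the differential inequality $\phi_s\le Q(\phi)$ with $Q(X):=-\frac{m}{1-m}X^2-b_0X+b_1$.

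For the upper bound \eqref{rwrc1} I would exploit that $Q(0)=b_1>0$ and $Q(X)\to-\infty$, so $Q$ has a unique positive root $\phi_+$ with $Q<0$ on $(\phi_+,\infty)$. A standard barrier argument then shows $\phi$ is strictly decreasing wherever it exceeds $\phi_+$, so $\phi(s)\le\max\{\phi_+,\phi(s_0)\}$ for $s\ge s_0$. Letting $s_0\to-\infty$ and using
\[
\phi=\frac{rw_r}{w}=(1-m)\frac{\widetilde\alpha}{\widetilde\beta}+(1-m)\frac{rg_r}{g}\longrightarrow(1-m)\frac{\widetilde\alpha}{\widetilde\beta}\quad\text{as }r\to0^+,
\]
which follows from $g(0)=\eta>0$ together with the boundary behaviour of $g_r$ in both parts of Theorem \ref{g-existence-thm}, gives $\phi\le C_1$, i.e. \eqref{rwrc1}.

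Next, for \eqref{wti} I would note that the increasing function $\widetilde w$ has a limit $L\in(0,\infty]$, and argue by contradiction: if $L<\infty$ then $\widetilde w_s=\phi\widetilde w\le C_1L$ and hence (by \eqref{ws}) $\widetilde w_{ss}$ are bounded, so $\widetilde w_s$ is uniformly continuous; since $\widetilde w_s\ge0$ and $\int_0^\infty\widetilde w_s\,ds<\infty$, a Barbalat-type argument forces $\widetilde w_s\to0$, whereupon \eqref{ws} gives $\widetilde w_{ss}\to b_1L>0$, a contradiction. Thus $L=\infty$. For \eqref{c2rwr} I would set $\psi:=\widetilde w_s=rw_r>0$ and rewrite \eqref{ws} as
\[
\psi_s=\Big(\tfrac{1-2m}{1-m}\phi-b_0\Big)\psi+\widetilde w\Big(b_1-\tfrac{\widetilde\beta}{n-1}\psi\Big),
\]
where by \eqref{rwrc1} the bracket multiplying $\psi$ is bounded, say by $A_0$ in absolute value. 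Using \eqref{wti}, fix $s_\ast$ with $\widetilde w\ge W$ on $[s_\ast,\infty)$ for $W$ large; then for $s\ge s_\ast$ the second term dominates once $\psi$ is away from the equilibrium value $\tfrac{(n-1)b_1}{\widetilde\beta}$, so $\psi_s<0$ above a threshold slightly larger than it and $\psi_s>0$ below a threshold slightly smaller. The same two-sided barrier argument, together with positivity and continuity of $\psi$ on the compact set $[0,s_\ast]$, produces positive constants $C_2,C_3$ with $C_2\le\psi\le C_3$ on $[0,\infty)$.

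The routine ingredients are the two Riccati/barrier comparisons. The main obstacle is the two-sided trapping of $\psi$ in \eqref{c2rwr}: the sign of the coefficient of $\psi$ depends on the size of $\widetilde w$, so one must first establish \eqref{wti} in order that, for large $s$, the term $-\tfrac{\widetilde\beta}{n-1}\widetilde w$ overwhelms the bounded $A_0$ and pins $\psi$ near $\tfrac{(n-1)b_1}{\widetilde\beta}$; obtaining the strictly positive lower bound $C_2$ (rather than merely $\psi>0$) is the delicate part.
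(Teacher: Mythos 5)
Your proposal is correct, and it splits naturally into two parts relative to the paper. For the upper bound \eqref{rwrc1} your argument is essentially the paper's own: the paper sets $p=(\widetilde q^m)_s/\widetilde q^m=\frac{m}{1-m}\phi$, derives the same Riccati differential inequality, and runs the same barrier argument anchored at the limit $p\to m\widetilde\alpha/\widetilde\beta$ as $s\to-\infty$ (your $\phi\to(1-m)\widetilde\alpha/\widetilde\beta$, justified the same way from the boundary behaviour of $g$ at $r=0$); the only real difference is cosmetic, in that working with $\phi$ and the quadratic $Q$ lets you treat both signs of $b_0$ at once, whereas the paper splits into the cases $b_0\ge 0$ and $b_0<0$. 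Where you genuinely diverge is in \eqref{c2rwr} and \eqref{wti}: the paper does not prove these at all, but quotes the argument of Lemma 3.2 of \cite{Hs1} verbatim, merely listing the constants to be used there, and (consistent with those constants) the quoted route obtains the two-sided bound on $rw_r$ by ODE/integrating-factor estimates and then gets $w\to\infty$ essentially by integrating the lower bound, $w(r)\ge w(1)+C_2\log r$. You reverse this logical order: you prove \eqref{wti} first by a Barbalat-type contradiction argument that uses only \eqref{rwrc1} and equation \eqref{ws} (if $\widetilde w\to L<\infty$ then $\widetilde w_s$ is bounded and uniformly continuous with finite integral, hence $\widetilde w_s\to 0$, whence $\widetilde w_{ss}\to b_1L>0$, absurd), and only then run the two-sided trapping of $\psi=\widetilde w_s$ around the equilibrium $(n-1)b_1/\widetilde\beta$, using $\widetilde w\to\infty$ to make the term $\widetilde w\left(b_1-\frac{\widetilde\beta}{n-1}\psi\right)$ dominate the bounded-coefficient term. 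This closes correctly: for $\psi\ge 2(n-1)b_1/\widetilde\beta$ one has $\widetilde w b_1\le\frac{\widetilde\beta}{2(n-1)}\widetilde w\psi$, hence $\psi_s\le\psi\left(A_0-\frac{\widetilde\beta}{2(n-1)}\widetilde w\right)<0$ once $\widetilde w$ is large, and symmetrically $\psi_s>0$ below half the equilibrium, so $\psi$ is trapped for $s\ge s_\ast$ and bounded on $[0,s_\ast]$ by compactness. The trade-off is clear: your version is self-contained and conceptually transparent (a dynamical barrier picture), while the paper's is shorter on the page because it defers the second half entirely to \cite{Hs1}.
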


\begin{proof}
First, we note that $\lim_{s\to -\infty} q(s)=0$, $ \lim_{s\to -\infty} \tilde q_s(-\infty)=0$ and by Lemma \ref{h1 w}, $\tilde q_s >0$ on $(-\infty, \infty)$. If $b_0 \ge 0$, then by \eqref{tl q ms}, 
\begin{equation*}
(\tilde q^m)_{ss} -\hat b_1 \tilde q^m \le 0 \Rightarrow (\tilde q^m)_s \le b_1 \tilde q^m \Rightarrow \frac{\tilde q_s }{\tilde q}\le \frac{b_1}{m}\Rightarrow \frac{rw_s}{w}\le \frac{(1-m)b_1}{m}, \quad \forall r\ge 0
\end{equation*}
and \eqref{rwrc1} follows.

If $b_0<0$, by \eqref{tl q ms}, 
\begin{equation}\label{qb0}
(\tilde q^m)_{ss}+b_0 (\tilde q^m)_s- \hat b_1 q^m\le 0.
\end{equation}
Let $p=\frac{(\tilde q^m)'}{\tilde q^m}$. Then by \eqref{qb0}, 
\begin{equation}\label{p'}
p'=\frac{(\tilde q^m)''}{\tilde q^m}-\frac{\left((\tilde q^m)'\right)^2}{\tilde q^m}\le |b_0|p+\hat b_1-p^2=-\left(p-(|b_0|/2)\right)^2+\hat b_1+(b_0^2/4).
\end{equation}
Let
$$b_2=\max\left( \frac{3m}{2} \frac{\tilde \alpha}{\tilde \beta} , \sqrt{\hat b_1+b_0^2}+|b_0| \right).$$
We claim that
\begin{equation}\label{pb2}
p(s)\le b_2\quad \forall s\in \re.
\end{equation}
Suppose \eqref{pb2} does not hold. Then there exists $s_0\in \re$ such that $p(s_0)>b_2$. Since
\begin{equation}\label{pmq}
p(s)=m \frac{\tilde q'}{\tilde q}=\frac{m}{1-m} \frac{\tilde w'}{\tilde w}=\frac{m}{1-m} \frac{r w'}{w}=m \frac{\tilde \alpha}{\tilde \beta} \left(1+ \frac{\tilde \beta}{\tilde \alpha} \frac{rg'}{g} \right),
\end{equation}
$p(s)\to m \frac{\tilde \alpha}{\tilde \beta}$ as $s \to -\infty.$ Let $s_1=\inf\left \{s'<s_0 : p(s)>b_2, \forall s'\le s \le s_0\right \}$. Then, we know that $-\infty <s_1<s_0$, $p(s)>b_2$ for all $s\in (s_1,s_0)$ and $p(s_1)=b_2$. By \eqref{p'}, $p'<0$ for all $s\in (s_1,s_0)$. Hence $p(s_0) \le p(s_1)=b_2$. Thus contradiction arises and \eqref{pb2} follows. Then by \eqref{pb2} and \eqref{pmq}, \eqref{rwrc1} holds with $C_1=b_2\frac{1-m}{m}.$

The inequalities $\eqref{c2rwr}$ and the limit of $w$ \eqref{wti} are proved exactly the same argument as the proof of Lemma 3.2 of \cite{Hs1}, with constants $a_1=\frac{\tilde \alpha}{\tilde \beta}(1-m) \left (n-2-\frac{\tilde \alpha}{\tilde \beta}m\right )=\frac{m}{1-m}b_1>0$, $a_2=\frac{\tilde \beta}{(n-1) a_1}$, $a_3=a_1^{-1} \max \left(|b_0|, \frac{|1-2m|}{|1-m|w(1)}\right)$, $C'_2=\min \left(1, (2a_2)^{-1}, \tilde w(1)/(8a_3), \sqrt{\tilde w(1)/(8a_3)} \right)$, $a_4=\frac{\tilde \beta}{3(n-1)}$, and $a_5=\frac{\tilde \beta (1-m)}{3(n-1)}.$ Thus, we refere to \cite{Hs1} for the rest of the proof.
\end{proof}

\begin{theorem}\label{w-limit-thm}
\begin{equation}\label{inv gro rate}
\lim_{r\to \infty}\frac{\4{w}(s)}{s}=\lim_{r\to \infty}\frac{w(r)}{\log r}=\lim_{r\to \infty}rw_r=\lim_{s\to \infty}\4{w}_s(s)=\frac{2(n-1)(n-2-nm)}{(1-m)\4{\beta}}.
\end{equation}
\end{theorem}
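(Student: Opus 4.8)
The plan is to reduce the four equalities in \eqref{inv gro rate} to a single convergence statement and then read off the common value from the reaction term of \eqref{ws}. Since $s=\log r$ and $w(r)=\widetilde w(s)$, the chain rule gives $rw_r=\widetilde w_s$ and $w(r)/\log r=\widetilde w(s)/s$; hence it suffices to prove $\widetilde w_s(s)\to L$ as $s\to\infty$, where
$$L:=\frac{2(n-1)(n-2-nm)}{(1-m)\widetilde\beta},$$
and then to deduce $\widetilde w(s)/s\to L$. From Lemma \ref{c123}, in the form $C_2\le rw_r\le C_3$ for $r\ge 1$, together with $\widetilde w_s=rw_r>0$ coming from Lemma \ref{h1 w}, I already have $0<C_2\le\widetilde w_s\le C_3$ for all $s\ge0$ and $\widetilde w(s)\to\infty$; in particular $\widetilde w_s^2/\widetilde w\to0$.

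The key is to rewrite \eqref{ws} so that $L$ appears explicitly. With $b_1=\frac{2(n-2-nm)}{1-m}$ from \eqref{b0b1} one has $L=(n-1)b_1/\widetilde\beta$, so the last two terms of \eqref{ws} combine and the equation reads
$$\widetilde w_{ss}=\frac{1-2m}{1-m}\,\frac{\widetilde w_s^2}{\widetilde w}-b_0\widetilde w_s+\frac{\widetilde\beta}{n-1}\,\widetilde w\,(L-\widetilde w_s).$$
Since $\widetilde w_s$ is bounded and $\widetilde w\to\infty$, the first two terms on the right are bounded uniformly in $s$, while the last term is the product of the blowing-up factor $\widetilde w$ with the deviation $(L-\widetilde w_s)$. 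Heuristically this forces $\widetilde w_s\to L$: were $\widetilde w_s$ bounded away from $L$ on a large set, the last term would dominate and drive $\widetilde w_{ss}$ to $\pm\infty$, contradicting the boundedness of $\widetilde w_s$.

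To make this rigorous I would run a trapping (barrier) argument, which I expect to be the main obstacle. Fix $\delta>0$. Because the first two terms on the right are bounded by a constant independent of $s$ while $\frac{\widetilde\beta}{n-1}\widetilde w(s)\to\infty$, there is $S_\delta$ such that, for $s\ge S_\delta$, $\widetilde w_s(s)\ge L+\delta$ implies $\widetilde w_{ss}(s)\le -1$, and $\widetilde w_s(s)\le L-\delta$ implies $\widetilde w_{ss}(s)\ge 1$. The first implication shows that for $s\ge S_\delta$ the level $L+\delta$ can be crossed by $\widetilde w_s$ only downward (at any upward crossing $\widetilde w_{ss}\le-1<0$), and that whenever $\widetilde w_s\ge L+\delta$ it decreases at unit rate; since $\widetilde w_s$ is bounded it must therefore drop below $L+\delta$ and stay there, giving $\limsup_{s\to\infty}\widetilde w_s\le L+\delta$. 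The symmetric implication gives $\liminf_{s\to\infty}\widetilde w_s\ge L-\delta$. As $\delta>0$ is arbitrary, $\widetilde w_s(s)\to L$, which is exactly $\lim_{r\to\infty}rw_r=\lim_{s\to\infty}\widetilde w_s=L$.

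Finally, from $\frac{\widetilde w(s)}{s}=\frac{\widetilde w(0)}{s}+\frac{1}{s}\int_0^s\widetilde w_s(\sigma)\,d\sigma$ and the convergence $\widetilde w_s(\sigma)\to L$, the Cesàro/L'Hôpital principle yields $\widetilde w(s)/s\to L$, hence $w(r)/\log r\to L$. This closes the chain of equalities in \eqref{inv gro rate}. The only delicate point is the trapping argument above—specifically verifying that the bounded lower-order terms are genuinely dominated by $\frac{\widetilde\beta}{n-1}\widetilde w\,|L-\widetilde w_s|$ once $s$ is large; all remaining steps are routine consequences of Lemma \ref{c123}.
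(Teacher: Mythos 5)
Your proof is correct, but it takes a genuinely different route from the paper's. You work directly with the rewritten equation (which is exactly \eqref{ti w ss} in the paper) and run a trapping/barrier argument on $\widetilde w_s$: since $\widetilde w\to\infty$ while the lower-order terms stay bounded (both facts from Lemma \ref{c123}), the levels $\widetilde w_s=L\pm\delta$ can only be crossed in one direction for large $s$, forcing $\widetilde w_s\to L$, and a Ces\`aro argument then gives $\widetilde w(s)/s\to L$. The paper instead sets $v=rw_r$, $v_1=v-a_0$, derives the first-order linear ODE \eqref{v1-eqn} for $v_1$, multiplies by the integrating factor $\psi_1(r)=\exp\bigl(\frac{\widetilde\beta}{n-1}\int_1^r\rho^{-1}w(\rho)\,d\rho\bigr)$, and evaluates the resulting integral representation by l'H\^opital's rule (again using Lemma \ref{c123}) to conclude $v\to a_0$ along an arbitrary sequence. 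Your sign-structure argument is more elementary---no integrating factor, no l'H\^opital---and is self-contained once Lemma \ref{c123} is in hand; the steps you flagged as delicate (domination of the bounded terms by $\frac{\widetilde\beta}{n-1}\widetilde w\,|L-\widetilde w_s|$, the one-directional crossing, and the ``drop and stay'' step) all check out, using $0<C_2\le\widetilde w_s\le C_3$, the monotonicity of $\widetilde w$, and $\widetilde\beta>0$. What the paper's integrating-factor machinery buys is reusability: the same $\psi$-weighted integration plus l'H\^opital scheme is applied repeatedly in Section \ref{section-Higher order asymptotics} (Lemmas \ref{hs}, \ref{h1s} and \ref{lim s2h2s}) to extract the successive correction terms of the expansion in Theorem \ref{Thm HAB}, whereas a pure trapping argument identifies only the leading-order limit.
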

\begin{proof}
We will use a modification of the proof of Theorem 1.3 of \cite{Hs1} to prove this theorem.
Let $b_0$ and $b_1$ be as given by \eqref{b0b1} and let $v(r)=rw_r(r)$
\begin{equation}\label{a0}
a_0=\frac{n-1}{\widetilde \beta}b_1, \text{ and } \quad v_1(r)=v(r)-a_0.
\end{equation}
By \eqref{wr} and a direct computation,
\begin{align}
&\left(r^{b_0}v(r)w^{\frac{2m-1}{1-m}}\right)_r=\frac{\4{\beta}}{n-1}\cdot\frac{w^{\frac{m}{1-m}}}{r^{1-b_0}}(a_0 - v(r))\quad\forall r>0\notag\\
\Rightarrow\quad &v_{1,r}+\frac{b_0}{r}v_1+\frac{\4{\beta}}{n-1}\frac{w(r)}{r}v_1
=\frac{1-2m}{1-m}\cdot\frac{v(r)^2}{rw(r)}-\frac{b_0a_0}{r}\quad\forall r>0.\label{v1-eqn} 
\end{align}
Let $\{r_i\}^{\infty}_{i=1}$ be a sequence of positive numbers such that $r_i \to \infty$ as $i \to \infty$. By Lemma \ref{c123}, there exist positive constants $C_1, C_2$ and $C_3$ such that \eqref{rwrc1} and \eqref{c2rwr} hold. Then by \eqref{c2rwr}, there exists a subsequence of the sequence $\{v(r_i)\}^{\infty}_{i=1}$, which we still denote by $\{v(r_i)\}^{\infty}_{i=1}$ such that the subsequence converges to some constant $v_\infty$ as $i\to\infty$. 
\begin{equation}\label{c2vic3}
C_2\le v_\infty\le C_3.
\end{equation}
Let 
$$
\psi_1(r)=\exp \left( \frac{\4{\beta}}{n-1}\int^r_{1}\rho^{-1}w(\rho) d \rho \right).
$$
Then by \eqref{v1-eqn},
\begin{align}\label{v1-eqn2}
r^{b_0}\psi_1(r)v_1(r)=&f_1(1)v_1(1)-a_0b_0 \int^r_1 \rho^{b_0-1}\psi_1(\rho)d\rho\notag\\
&\qquad +\frac{1-2m}{1-m} \int^r_1 \rho^{b_0-1}v(\rho)^2\psi_1(\rho)w(\rho)^{-1}d\rho \quad\forall r\ge 1.
\end{align}
By \eqref{wti} there exists a constant $r_0>1$ such that
\begin{equation*}
w(r)> \frac{n-1}{\4{\beta}} \left( |b_0| +2 \right)\quad\forall r\ge r_0.
\end{equation*}
Hence
\begin{align}\label{rb0-1}
&\psi_1(r)\ge\left(\frac{r}{r_0}\right)^{|b_0| +2}\quad\forall r\ge r_0\notag\\
\Rightarrow\quad&r^{b_0-1}\psi_1(r)\ge \frac{r^{|b_0| +b_0+1}}{r_0^{|b_0| +2}} \ge r_0^{b_0 -2}r\quad\forall r\ge r_0
\notag\\
\Rightarrow\quad&r^{b_0-1}\psi_1(r) \to \infty \quad \text{ and } \quad r \to \infty.
\end{align}
By \eqref{c2rwr}, \eqref{wti}, \eqref{rb0-1}, and the l'Hospital rule,
\begin{align}\label{integral-limit1}
&\liminf_{r\to \infty}\frac{r^{b_0-1} \psi_1(r)}{w(r)}
=\liminf_{r\to \infty}\frac{\left (b_0-1+\frac{\4{\beta}}{n-1}w(r) \right )r^{b_0-1}\psi_1(r)}{rw_r(r)}= \infty\notag\\
\Rightarrow\quad&\int^r_1\frac{r^{b_0-1}v(\rho)^2\psi_1(\rho)}{w(\rho)} \, d \rho \to \infty \quad \text{ as } r\to \infty.
\end{align}
On the other hand by \eqref{wti}, \eqref{rb0-1} and the l'Hospital rule, 
\begin{align}\label{int rhof1}
\lim_{r\to \infty}\frac{\int^r_{1} \rho^{b_0-1}\psi_1(\rho) d\rho}{r^{b_0}\psi_1(r)}=&\lim_{r\to \infty}\frac{ r^{b_0-1}\psi_1(r) }{b_0r^{b_0-1}\psi_1(r)+\frac{\4{\beta}}{n-1} r^{b_0-1} w(r)\psi_1(r)}\notag\\ 
=&\lim_{r\to \infty}\frac{1}{b_0+\frac{\4{\beta}}{n-1} w(r)}=0.
\end{align}
By \eqref{wti}, \eqref{c2vic3}, \eqref{v1-eqn2}, \eqref{rb0-1}, \eqref{integral-limit1}, \eqref{int rhof1},  and the l'Hospital rule,
\begin{align*}
\lim_{i\to \infty} v_1(r_i)&=\lim_{i\to \infty} \frac{\psi_1(1)v_1(1)-a_0b_0\int^{r_i}_{1} \rho^{b_0-1} \psi_1(\rho) d\rho+\frac{1-2m}{1-m}\int^{r_i}_{1} \frac{\rho^{b_0-1}v(\rho)^2 \psi_1(\rho)}{w(\rho)} d\rho} {r_i^{b_0}\psi_1(r_i)}\\
&=\frac{1-2m}{1-m} \lim_{i\to \infty} \frac{r_i^{b_0-1}v(r_i)^2 \psi_1(r_i)w(r_i)^{-1}} {b_0r^{b_0-1}\psi_1(r_i)+\frac{\4{\beta}}{n-1} r^{b_0-1} w(r_i)\psi_1(r_i)}\\
&=\frac{1-2m}{1-m} \lim_{i\to \infty} \frac{v(r_i)^2 w(r_i)^{-1}} {b_0+\frac{\4{\beta}}{n-1} w(r_i)}=0.
\end{align*}
Hence $\lim_{i\to \infty} v(r_i)=a_0$. Since the sequence $\{r_i\}^{\infty}_{i=1}$ is arbitrary, $\lim_{r\to \infty} v(r)=a_0$
and the theorem follows.
\end{proof}

\begin{theorem}\label{exi}
For any constant $A>0$, there exists a solution $f$ of \eqref{ode} which satisfies \eqref{gro 0}, \eqref{gro inf} and \eqref{f dec}.
\end{theorem}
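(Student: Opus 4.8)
The plan is to obtain $f$ by \emph{inverting} a solution $g$ of the associated equation \eqref{inv eq} supplied by Theorem \ref{g-existence-thm}, rather than solving \eqref{ode} directly. Given $A>0$, I would set $\eta=A$ and let $g=g_{\widetilde\beta,A}$ be the unique solution of \eqref{g-eqn} with $g(0)=A$ provided by Theorem \ref{g-existence-thm}. Defining $f(r):=r^{-\frac{n-2}{m}}g(r^{-1})$ as in \eqref{f-g-relation}, the inversion identity recalled in the introduction (\cite{HK}) shows immediately that $f$ is a positive solution of \eqref{ode} with $\alpha$, $\beta$ satisfying \eqref{ab}. Thus existence is essentially free, and everything reduces to translating the behaviour of $g$ at $0$ and at $\infty$ into the behaviour of $f$ at $\infty$ and at $0$. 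For \eqref{gro inf} this is immediate: since $r^{\frac{n-2}{m}}f(r)=g(r^{-1})$ by construction and $g$ is continuous at $0$ with $g(0)=A$, letting $r\to\infty$ (so $r^{-1}\to 0^+$) gives $\lim_{r\to\infty}r^{\frac{n-2}{m}}f(r)=A$.

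Next I would prove the blow-up rate \eqref{gro 0} using the function $w$ from \eqref{w-defn} and Theorem \ref{w-limit-thm}. The key is the algebraic identity $r^2 f(r)^{1-m}=w(r^{-1})$ for all $r>0$: substituting $g(r^{-1})^{1-m}=r^{(1-m)\widetilde\alpha/\widetilde\beta}w(r^{-1})$ into $f(r)^{1-m}=r^{-(n-2)(1-m)/m}g(r^{-1})^{1-m}$ and using $\frac{\widetilde\alpha}{\widetilde\beta}-\frac{n-2}{m}=-\frac{2}{1-m}$ from \eqref{alpha-beta-and-tilde-relation} collapses the power of $r$ exactly to $r^{-2}$. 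Then $\frac{r^2 f(r)^{1-m}}{\log r^{-1}}=\frac{w(r^{-1})}{\log r^{-1}}$, and the limit $r\to 0$ corresponds to $\rho=r^{-1}\to\infty$; Theorem \ref{w-limit-thm} yields $\lim_{\rho\to\infty}w(\rho)/\log\rho=\frac{2(n-1)(n-2-nm)}{(1-m)\widetilde\beta}$, and since $\widetilde\beta=-\beta=|\beta|$ this is precisely the constant in \eqref{gro 0}.

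Finally, \eqref{f dec} follows from a second inversion identity. Differentiating $f(r)=r^{-\frac{n-2}{m}}g(r^{-1})$ and collecting terms yields $\alpha f(r)+\beta r f_r(r)=r^{-\frac{n-2}{m}}\big(\widetilde\alpha g(\rho)+\widetilde\beta\rho g_r(\rho)\big)\big|_{\rho=r^{-1}}$, where I use $\alpha-\frac{n-2}{m}\beta=\widetilde\alpha$ and $-\beta=\widetilde\beta$ from \eqref{tilde ab}. Since $\widetilde\alpha g+\widetilde\beta\rho g_r=\widetilde\alpha\big(g+\frac{\widetilde\beta}{\widetilde\alpha}\rho g_r\big)$, the monotonicity \eqref{g-monotone-expression} together with $\widetilde\alpha>0$ makes the bracket strictly positive, so $\alpha f+\beta r f_r>0$, which is \eqref{f dec}.

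The proof carries no serious obstacle: the two genuinely hard ingredients — existence of $g$ (from \cite{HK}) and the sharp growth rate of $w$ (Theorem \ref{w-limit-thm}) — are already established, so this statement is essentially a bookkeeping translation under the inversion $r\mapsto r^{-1}$. The only point demanding care is the exponent computation behind $r^2 f(r)^{1-m}=w(r^{-1})$, which hinges on the precise relation \eqref{alpha-beta-and-tilde-relation} between $\widetilde\alpha/\widetilde\beta$ and $\frac{n-2}{m}$ forced by the choice $\alpha=\frac{2\beta}{1-m}$; a different relation between $\alpha$ and $\beta$ would not produce the clean $r^{-2}$ factor, and hence not the logarithmic blow-up rate of \eqref{gro 0}.
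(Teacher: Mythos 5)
Your proposal is correct and follows essentially the same route as the paper: invert the solution $g=g_{\widetilde\beta,A}$ of \eqref{g-eqn} from Theorem \ref{g-existence-thm} via \eqref{f-g-relation}, deduce \eqref{gro 0} and \eqref{gro inf} from Theorem \ref{w-limit-thm} together with \eqref{alpha-beta-and-tilde-relation}, and obtain \eqref{f dec} from the monotonicity \eqref{g-monotone-expression} exactly as in the paper's displayed computation. Your explicit identity $r^2f(r)^{1-m}=w(r^{-1})$ just spells out the exponent bookkeeping that the paper leaves implicit, so there is nothing to correct.
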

\begin{proof}
Let $g$ be the unique solution of \eqref{g-eqn}
given by Theorem \ref{g-existence-thm} with $\eta=A$. Then the function $f$ given by 
\eqref{f-g-relation} satisfies \eqref{ode}. By \eqref{f-g-relation}, \eqref{alpha-beta-and-tilde-relation}, \eqref{g-eqn}  and Theorem \ref{w-limit-thm}, \eqref{gro 0} and \eqref{gro inf} hold.
By \eqref{ab}, \eqref{alpha-beta-and-tilde-relation} and  \eqref{g-monotone-expression}, 
\begin{align*}
\alpha f(r) + \beta r f_r(r) &= \beta \left( \frac{\alpha}{\beta} f (r) + r f_r(r) \right)\\
& = \beta r^{-\frac{n-2}{m}}\left( \left( \frac{\alpha}{\beta} - \frac{n-2}{m} \right) g(r^{-1}) - r^{-1} g_r (r^{-1}) \right)\\
& = \widetilde \beta r^{-\frac{n-2}{m}}\left( \frac{\widetilde \alpha}{\widetilde \beta}  g(r^{-1}) + r^{-1} g_r (r^{-1}) \right) > 0 \quad \forall r>0
\end{align*}
and \eqref{f dec} follows.
\end{proof}

We next observe that the proof of Lemma 3.2, Lemma 3.3, and Remark 2 of \cite{HK} still holds when $\alpha$, $\beta$ are  given by \eqref{ab}. Hence we have the following results.

\begin{lem}[cf. Lemma 3.2 and Lemma 3.3 of \cite{HK}] \label{lem gg} 
Let $n\ge 3$, $0<m<\frac{n-2}{n}$, and $\alpha$, $\beta$ be as given by \eqref{ab}. Let $f$ be a solution of \eqref{ode} which satisfies 
\eqref{gro inf} for some positive constant $A$. Let $g$, $\widetilde \alpha$, $\widetilde \beta$ be as given by \eqref{def g} and \eqref{tilde ab} respectively.
Then $f$ satisfies \eqref{f dec} and $g$ is equal to the solution of \eqref{g-eqn} given by Theorem \ref{g-existence-thm} with $\eta=A$.
\end{lem}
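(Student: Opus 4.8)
The plan is to run the inversion identity in reverse: define $g$ from $f$ by \eqref{def g}, show that it solves the initial value problem \eqref{g-eqn} with $\eta=A$ in the regularity class prescribed by Theorem \ref{g-existence-thm}, invoke the uniqueness there to identify $g=g_{\widetilde\beta,A}$, and finally transport the monotonicity \eqref{g-monotone-expression} of that solution back to $f$ to obtain \eqref{f dec}.

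First I would record that $g$ defined by \eqref{def g} solves the ODE in \eqref{g-eqn} on $(0,\infty)$; this is exactly the inversion computation already stated in the introduction (the passage from \eqref{ode} to \eqref{inv eq}). Next I would translate the growth hypothesis \eqref{gro inf} at infinity into the boundary value at the origin: writing $\rho=r^{-1}$, \eqref{def g} gives $g(r)=\rho^{\frac{n-2}{m}}f(\rho)$, so \eqref{gro inf} yields $\lim_{r\to 0^+}g(r)=A$. Thus $g$ is a positive solution of the ODE on $(0,\infty)$ with finite limit $A$ at the origin.

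The technical core is then to upgrade $g(0^+)=A$ to the full initial data and regularity required by Theorem \ref{g-existence-thm}. Writing the equation in divergence form $\big(r^{n-1}(g^m)_r\big)_r=-\frac{m}{n-1}r^{\frac{n-2}{m}-3}(\widetilde\alpha g+\widetilde\beta r g_r)$ and integrating from the origin, boundedness of $g$ forces the coefficient of the singular branch $g^m\sim r^{-(n-2)}$ to vanish. A bootstrap then shows $r g_r\to 0$, so the right-hand side is asymptotically $-\frac{m}{n-1}\widetilde\alpha A\,r^{\frac{n-2}{m}-3}$; integrating once more gives $g_r(r)\sim c\,r^{-\delta_1}$ and hence $g(r)-A\sim c'\,r^{1-\delta_1}=c'\,r^{2\delta_0}$. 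In the range $\frac{n-2}{n+1}\le m<\frac{n-2}{n}$ one has $\delta_1\in[0,1)$ as in \eqref{defn-delta0-1}, so $g\in C^{0,\delta_0}([0,\infty))$ and $\lim_{r\to0^+}r^{\delta_1}g_r(r)$ equals the constant prescribed in \eqref{eq-initial-m-large}; in the range $0<m<\frac{n-2}{n+1}$ the analogous computation gives $g_r(r)\to 0$, i.e. $g\in C^1([0,\infty))$ with $g_r(0)=0$ as in \eqref{g-at-x=0}. In either case $g$ satisfies the hypotheses of Theorem \ref{g-existence-thm} with $\eta=A$, so its uniqueness part gives $g=g_{\widetilde\beta,A}$. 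This is precisely the content of Lemmas 3.2, 3.3 and Remark 2 of \cite{HK}, whose proofs carry over under \eqref{ab}.

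Finally, \eqref{f dec} follows at once: since $g=g_{\widetilde\beta,A}$ it satisfies the monotonicity \eqref{g-monotone-expression} by Theorem \ref{g-existence-thm}(b), and the algebraic identity established in the proof of Theorem \ref{exi}, namely $\alpha f(r)+\beta r f_r(r)=\widetilde\beta\,r^{-\frac{n-2}{m}}\big(\frac{\widetilde\alpha}{\widetilde\beta}g(r^{-1})+r^{-1}g_r(r^{-1})\big)$, transfers the positivity of $g+\frac{\widetilde\beta}{\widetilde\alpha}rg_r$ to $\alpha f+\beta r f_r$. I expect the main obstacle to be the asymptotic analysis of the singular ODE at $r=0$: ruling out the $r^{-(n-2)}$ branch and, especially in the borderline range $\frac{n-2}{n+1}\le m<\frac{n-2}{n}$ where $g$ is only Hölder continuous and $g_r$ blows up, closing the bootstrap $rg_r\to0$ and pinning down the constant in the $r^{\delta_1}g_r$ limit precisely enough to match \eqref{eq-initial-m-large} and apply the uniqueness statement.
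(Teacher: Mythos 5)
Your proposal is correct and takes essentially the same approach as the paper: the paper offers no independent argument here, stating only that the proofs of Lemma 3.2, Lemma 3.3 and Remark 2 of \cite{HK} carry over when $\alpha$, $\beta$ are given by \eqref{ab}, and your sketch (inversion of $f$ to a solution $g$ of \eqref{inv eq}, conversion of \eqref{gro inf} into the initial data required by Theorem \ref{g-existence-thm}, identification by its uniqueness assertion, and transfer of \eqref{g-monotone-expression} to \eqref{f dec} via the identity appearing in the proof of Theorem \ref{exi}) is precisely the content of those cited arguments. The only cosmetic slip is attributing \eqref{g-monotone-expression} to part (b) of Theorem \ref{g-existence-thm} alone, whereas it is needed (and holds) for the whole range $0<m<\frac{n-2}{n}$, as the paper's own proof of Theorem \ref{exi} implicitly uses.
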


\begin{remark} [cf. Remark 2 of \cite{HK}] \label{flambda12}
Let  $f_\lambda$ be as given by \eqref{f-lambda-defn}. Then
$$
\frac{d}{d\lambda}f_\lambda(r)<0 \quad\forall r>0,\lambda>0.
$$
Moreover for any $\lambda_1>\lambda_2>0$, there exists a constant $c_0>0$ such that 
$$
c_0 f_{\lambda_2}(r) \le f_{\lambda_1}(r) <f_{\lambda_2}(r)\quad \forall r>0.
$$
\end{remark}

We are now ready for the proof of Theorem \ref{f-existence-uniqueness-thm}.

\begin{proof}[Proof of Theorem \ref{f-existence-uniqueness-thm}]
By Theorem \ref{exi}, for any $A>0$, there exists a solution $f=f_{\beta,A}$ of \eqref{ode} which satisfies \eqref{gro 0}, \eqref{gro inf} and \eqref{f dec}. It remains to prove the uniqueness of solution of \eqref{ode} that satisfies  \eqref{gro inf}. 
Suppose \eqref{ode} has two solutions $f_1$, $f_2$, that satisfy \eqref{gro inf}. Let $g_1$, $g_2$ be given by \eqref{def g} with 
$f=f_1,f_2$. By Lemma \ref{lem gg}  both $g_1$ and $g_2$ are the solution of \eqref{g-eqn} with $\eta=A$ given by Theorem \ref{g-existence-thm}. Hence by \eqref{def g} and Theorem \ref{g-existence-thm},
\begin{equation*}
g_1(r)=g_2(r)\quad\forall r\ge 0\quad\Rightarrow\quad f_1(r)=f_2(r)\quad\forall r>0
\end{equation*}
and the theorem follows.

\end{proof}

\section{Higher order asymptotics of eternal self-similar solutions near the origin}
\label{section-Higher order asymptotics}
\setcounter{equation}{0}
\setcounter{theorem}{0}
In this section, we will use a modification of the technique of \cite{CD} and \cite{Hs4} to prove the higher order asymptotic of the eternal self-similar solutions near the origin.
Since the proofs are similar to that of \cite{CD} and \cite{Hs4}, we will only sketch the proofs here.

By \eqref{ws},
\begin{equation}\label{ti w ss}
\4{w}_{ss}=\left(\frac{1-2m}{1-m}\right)\frac{\4{w}_s^2}{\4{w}}+\frac{n-2-(n+2)m}{1-m} \4{w}_s+\frac{\4{\beta}}{n-1} \left ( \frac{2(n-1)(n-2-nm)}{(1-m) \4{\beta}} -\4{w}_s \right )\4{w}
\quad\forall s\in\R.
\end{equation}
Note that this is the same as (2.2) of \cite{Hs4} if $\4{\beta}$ is replaced by the constant $\beta$ in the paper and the sign of the second term on the right hand side of \eqref{ti w ss} is change to negative sign.
Let 
\begin{equation}\label{def h}
h(s)=\4{w}(s)-\frac{2(n-1)(n-2-nm)}{(1-m)\4{\beta}}s.
\end{equation}
Then by \eqref{ti w ss},
\begin{equation}\label{hss}
h_{ss}+\left(\frac{2(n-2-nm)}{1-m}s + \frac{\4{\beta}}{n-1}h - \frac{n-2-(n+2)m}{1-m} \right) h_s\\
=\frac{1-2m}{1-m}\cdot \frac{\4{w}_s^2}{\4{w}}+ b_2\quad\forall s\in\R,
\end{equation}
where
$$
b_2=\frac{2(n-1)(n-2-nm)(n-2-(n+2)m)}{(1-m)^2\, \4{\beta}}.
$$

\begin{lem}\label{hs}
Let $n\ge3$, $0<m<\frac{n-2}{n}$, $m \neq \frac{n-2}{n+2}$ and $\tilde \alpha$, $\tilde \beta$ satisfy \eqref{alpha-beta-and-tilde-relation}. Then $h$ satisfies
\begin{equation}\label{h-s-infty-limit}
\lim_{s \to \infty} \frac{h(s)}{\log s}=\lim_{s \to \infty} s h_s(s)=\frac{(1-m)b_2}{2(n-2-nm)}=\frac{(n-1) \left( n-2 -(n+2)m \right) }{(1-m)\4{\beta}}.
\end{equation}
\end{lem}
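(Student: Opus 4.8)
The plan is to read \eqref{hss} as a first order linear ODE for the unknown $p:=h_s$ and to analyse its large-$s$ behaviour by an integrating-factor computation followed by l'Hôpital's rule, exactly in the spirit of the proof of Theorem \ref{w-limit-thm}. The first step is to record what Theorem \ref{w-limit-thm} already supplies. Writing $L:=\frac{2(n-1)(n-2-nm)}{(1-m)\4{\beta}}$, equation \eqref{inv gro rate} gives $\4{w}_s(s)\to L$ and $\4{w}(s)/s\to L$ as $s\to\infty$; in particular $\4{w}(s)\to\infty$, and by \eqref{def h} we have $h_s(s)=\4{w}_s(s)-L\to 0$ together with $h(s)/s\to0$. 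Since $\4{w}_s$ stays bounded while $\4{w}\to\infty$, the right-hand side of \eqref{hss} satisfies
$$g(s):=\frac{1-2m}{1-m}\cdot\frac{\4{w}_s^2}{\4{w}}+b_2\longrightarrow b_2\qquad\text{as }s\to\infty.$$

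Writing $P(s):=\frac{2(n-2-nm)}{1-m}s+\frac{\4{\beta}}{n-1}h(s)-\frac{n-2-(n+2)m}{1-m}$, equation \eqref{hss} reads $p_s+P(s)\,p=g(s)$. I would introduce the integrating factor $\Phi(s)=\exp\bigl(\int_{s_0}^sP(\sigma)\,d\sigma\bigr)$ for some fixed large $s_0$; since the leading term of $P$ is $\frac{2(n-2-nm)}{1-m}\sigma$ with positive coefficient (here $n-2-nm>0$ because $m<\frac{n-2}{n}$) and since $h(\sigma)=o(\sigma)$, the exponent grows like $\frac{n-2-nm}{1-m}s^2$, so $\Phi(s)\to\infty$. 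Integrating $(\Phi p)_s=\Phi g$ from $s_0$ to $s$ gives
$$s\,p(s)=\frac{\Phi(s_0)p(s_0)+\int_{s_0}^s\Phi(\sigma)g(\sigma)\,d\sigma}{\Phi(s)/s}.$$

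The denominator tends to $\infty$, so the $?/\infty$ form of l'Hôpital's rule applies. Using $\Phi'=P\Phi$ one computes $\frac{d}{ds}(\Phi/s)=\Phi(Ps-1)/s^2$, whence the quotient of derivatives equals
$$\frac{\Phi g}{\Phi(Ps-1)/s^2}=\frac{g(s)}{\frac{2(n-2-nm)}{1-m}+\frac{\4{\beta}}{n-1}\frac{h(s)}{s}-\frac{n-2-(n+2)m}{1-m}\frac1s-\frac1{s^2}}\longrightarrow\frac{b_2}{\frac{2(n-2-nm)}{1-m}}=\frac{(1-m)b_2}{2(n-2-nm)},$$
using $g\to b_2$ and $h(s)/s\to0$. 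This yields $\lim_{s\to\infty}sh_s(s)=\frac{(1-m)b_2}{2(n-2-nm)}=:c$. Finally, a second application of l'Hôpital gives $\lim_{s\to\infty}\frac{h(s)}{\log s}=\lim_{s\to\infty}\frac{h_s(s)}{1/s}=c$, and substituting the value of $b_2$ turns $c$ into $\frac{(n-1)(n-2-(n+2)m)}{(1-m)\4{\beta}}$, completing the proof.

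The main obstacle I anticipate is controlling $\Phi$ accurately enough to legitimise the l'Hôpital step: one must know $h(s)=o(s)$ (so that $P(s)\sim\frac{2(n-2-nm)}{1-m}s$, guaranteeing both $\Phi(s)\to\infty$ and the vanishing of $h(s)/s$ inside the derivative quotient), and one must check that the $?/\infty$ form remains valid even when $b_2<0$, i.e.\ in the range $\frac{n-2}{n+2}<m<\frac{n-2}{n}$, where the numerator integral tends to $-\infty$; this causes no trouble since only the divergence of the denominator $\Phi(s)/s$ is required. The hypothesis $m\ne\frac{n-2}{n+2}$ enters precisely to ensure $b_2\ne0$, so that the leading-order l'Hôpital limit $c$ is nonzero and captures the true asymptotics. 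The estimate $h=o(s)$ is exactly the content of Theorem \ref{w-limit-thm}, so no new a priori bound beyond that theorem is needed.
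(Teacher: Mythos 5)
Your proposal is correct and takes essentially the same route as the paper: the paper's proof of Lemma \ref{hs} rewrites \eqref{hss} as a first-order linear equation for $h_s$, multiplies by the identical integrating factor $\psi$ in \eqref{psi-defn} (whose logarithmic derivative is exactly your $P$), and applies the same l'H\^opital computation, using Theorem \ref{w-limit-thm} for both $h(s)=o(s)$ and $\4{w}_s^2/\4{w}\to 0$. The only cosmetic difference is that the paper first sandwiches the right-hand side between $b_2\pm\ve$ on $[s_1,\infty)$ and integrates the resulting differential inequalities, whereas you keep the non-constant right-hand side $g(s)$ inside the integral and apply the $?/\infty$ form of l'H\^opital directly; both steps are valid and yield the same limit.
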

\begin{proof}
We will use a modification of the proof of Lemma 2.3 of \cite{Hs4} to prove this lemma. We first observe that by Theorem \ref{w-limit-thm},
\begin{equation}\label{w-ratio-0}
\lim_{s\to\infty}\frac{\4{w}_s^2(s)}{\4{w}(s)}=\frac{\lim_{s\to\infty}\4{w}_s^2(s)}{\lim_{s\to\infty}s\cdot (\4{w}(s)/s)}=0.
\end{equation}
Then by \eqref{hss} and \eqref{w-ratio-0} for any $0<\3<|b_2|/2$ there exists a constant $s_1\in\R$ such that
\begin{equation}\label{h-ineqn}
b_2-\3\le h_{ss}+\left(\frac{2(n-2-nm)}{1-m}s+\frac{\4{\beta}}{n-1}h-\frac{n-2-(n+2)m}{1-m}\right)h_s\le b_2+\3\quad\forall s\ge s_1.
\end{equation}
Let 
\begin{equation}\label{psi-defn}
\psi (s)=\mbox{exp}\,\left(\frac{n-2-nm}{1-m}s^2+\frac{\4{\beta}}{n-1}\int_1^sh(z)\,dz-\frac{n-2-(n+2)m}{1-m}s\right).
\end{equation}
Multiplying  \eqref{h-ineqn} by $\psi$ and integrating over $(s_1,s)$,
\begin{equation}\label{h'-ineqn}
\frac{\psi (s_1)h_s(s_1)+(b_2-\3)\int_{s_1}^s\psi (z)\,dz}{s^{-1}\psi (s)}\le sh_s(s)\le\frac{\psi (s_1)h_s(s_1)+(b_2+\3)\int_{s_1}^s\psi (z)\,dz}{s^{-1}\psi (s)}
\end{equation}
holds for any $ s\ge s_1$. Note that by Theorem \ref{w-limit-thm}, $h(s)=o(s)$  and $h(s)/s\to 0$ as $s\to\infty$. Hence $\psi(s)\to\infty$ as $s\to\infty$. Thus by  the l'Hospital rule, we have
\begin{equation}\label{psi-expression-ratio-limit}
\lim_{s\to\infty}\frac{\psi (s)}{s}=\lim_{s\to\infty}\psi(s)\left(\frac{2(n-2-nm)}{1-m}s+\frac{\4{\beta}}{n-1}h(s)-\frac{n-2-(n+2)m}{1-m}\right)=\infty
\end{equation}
and
\begin{align}\label{ratio-limit2}
\lim_{s\to\infty}\frac{\int_{s_1}^s\psi (z)\,dz}{s^{-1}\psi (s)}
=&\lim_{s\to\infty}\frac{\psi (s)}{-s^{-2}\psi (s)+s^{-1}\psi (s)\left(\frac{2(n-2-nm)}{1-m}s+\frac{\4{\beta}}{n-1}h(s)-\frac{n-2-(n+2)m}{1-m}\right)}\notag\\
=&\frac{(1-m)}{2(n-2-nm)}.
\end{align}
Letting first $s\to\infty$ and then $\3\to 0$ in \eqref{h'-ineqn}, by \eqref{psi-expression-ratio-limit} and  \eqref{ratio-limit2} we get \eqref{h-s-infty-limit} and the lemma follows.
\end{proof}

We introduce the result for the case $m=\frac{n-2}{n+2}$ in Proposition 3.1 of \cite{Hs4} and Proposition 2.3 of \cite{CD}.

\begin{lem}\cite{CD, Hs4}\label{h-limit2}
Let $n\ge3$, $0<m<\frac{n-2}{n}$, $m=\frac{n-2}{n+2}$ and $\tilde \alpha$, $\tilde \beta$ satisfy \eqref{alpha-beta-and-tilde-relation}. Then 
\begin{equation}\label{h'-limit5}
\lim_{s \to \infty}s^2h_s(s)=\frac{(n-1)(1-2m)}{(1-m)\4{\beta}}
\end{equation}
holds.
\end{lem}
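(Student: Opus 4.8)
The statement to prove (Lemma \ref{h-limit2}) is the critical case $m=\frac{n-2}{n+2}$, where the finer asymptotic rate $s^2 h_s(s)\to \frac{(n-1)(1-2m)}{(1-m)\4{\beta}}$ replaces the logarithmic behaviour of Lemma \ref{hs}. The reason this case is special is visible in \eqref{hss}: when $m=\frac{n-2}{n+2}$ the coefficient $\frac{n-2-(n+2)m}{1-m}$ of the linear term $h_s$ vanishes identically, so the leading drift in the equation for $h_s$ changes character and the logarithmic growth in $h$ degenerates to a bounded $h$ with a $1/s^2$-decaying derivative. The plan is to carry out a weighted-integrating-factor analysis of \eqref{hss} analogous to the proof of Lemma \ref{hs}, but tuned to this degenerate coefficient, following Proposition 3.1 of \cite{Hs4} and Proposition 2.3 of \cite{CD}.

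First I would specialize \eqref{hss} to $m=\frac{n-2}{n+2}$, so that the $h_s$ term loses its constant coefficient and \eqref{hss} becomes
\begin{equation*}
h_{ss}+\left(\frac{2(n-2-nm)}{1-m}s+\frac{\4{\beta}}{n-1}h\right)h_s=\frac{1-2m}{1-m}\cdot\frac{\4{w}_s^2}{\4{w}}+b_2,
\end{equation*}
and note that at this value of $m$ one in fact has $b_2=0$ (since $b_2$ carries the factor $n-2-(n+2)m$). Thus the forcing is reduced entirely to the nonlinear term $\frac{1-2m}{1-m}\cdot\frac{\4{w}_s^2}{\4{w}}$, and the whole point is to extract its precise decay rate. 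From Theorem \ref{w-limit-thm} we already know $\4{w}_s(s)\to\frac{2(n-1)(n-2-nm)}{(1-m)\4{\beta}}$ and $\4{w}(s)/s\to$ the same constant, so $\frac{\4{w}_s^2}{\4{w}}\sim \frac{C}{s}$ with $C=\frac{2(n-1)(n-2-nm)}{(1-m)\4{\beta}}$; the refined statement \eqref{h'-limit5} is essentially the assertion that this $C/s$ forcing, integrated against the Gaussian-type weight coming from the $s\,h_s$ drift, produces a derivative decaying like $\mathrm{const}/s^2$.

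The key computational step is to use the integrating factor analogous to \eqref{psi-defn}, namely $\psi(s)=\exp\!\left(\frac{n-2-nm}{1-m}s^2+\frac{\4{\beta}}{n-1}\int_1^s h(z)\,dz\right)$ (with the constant-coefficient term now absent), so that $(\psi h_s)_s=\psi\cdot\big(\frac{1-2m}{1-m}\frac{\4{w}_s^2}{\4{w}}+b_2\big)$. One then writes $s^2 h_s=\frac{s^2}{\psi(s)}\big(\psi(s_1)h_s(s_1)+\int_{s_1}^s\psi(z)\,(\text{forcing})\,dz\big)$ and applies l'Hospital's rule twice, exactly as in \eqref{psi-expression-ratio-limit}--\eqref{ratio-limit2}, to evaluate $\lim_{s\to\infty}\frac{s^2}{\psi(s)}\int_{s_1}^s\psi(z)\frac{\4{w}_s^2(z)}{\4{w}(z)}\,dz$. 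Each application of l'Hospital peels off one factor of the dominant weight derivative $\frac{2(n-2-nm)}{1-m}s\,\psi(s)$; after substituting the limiting value $\frac{\4{w}_s^2}{\4{w}}\sim\frac{C}{s}$ and simplifying the accumulated constants (with $\frac{1-2m}{1-m}$ from the forcing coefficient and $C$ from Theorem \ref{w-limit-thm}), the constant collapses to $\frac{(n-1)(1-2m)}{(1-m)\4{\beta}}$.

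The main obstacle I anticipate is making the l'Hospital argument rigorous when the forcing $\frac{\4{w}_s^2}{\4{w}}$ is only known asymptotically rather than exactly: one must control the error between $\frac{\4{w}_s^2}{\4{w}}$ and its leading term $\frac{C}{s}$ well enough that it does not contaminate the $1/s^2$ rate after dividing by the rapidly growing $\psi$. The cleanest route, and the one I would follow from \cite{CD, Hs4}, is to sandwich the forcing between $(C\pm\3)/s$ for $s$ large (using the already-established limits from Theorem \ref{w-limit-thm}), run the integrating-factor/l'Hospital estimate to get upper and lower bounds of the form $\frac{(n-1)(1-2m)}{(1-m)\4{\beta}}\pm O(\3)$, and then let $\3\to0$, precisely mirroring the $\3$-sandwiching in \eqref{h-ineqn}--\eqref{h'-ineqn}. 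Since the paper explicitly cites this as a known result and only asks to record it, invoking Proposition 3.1 of \cite{Hs4} and Proposition 2.3 of \cite{CD} after verifying that \eqref{hss} in the case $m=\frac{n-2}{n+2}$ matches their governing equation (with $\4{\beta}$ in place of their $\beta$ and the indicated sign change) suffices to conclude \eqref{h'-limit5}.
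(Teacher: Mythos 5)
Your proposal is correct and follows essentially the same route as the paper's proof: specialize \eqref{hss} to $m=\frac{n-2}{n+2}$ (where both the linear drift coefficient and $b_2$ vanish), multiply by the integrating factor $\psi$ of \eqref{psi-defn}, and evaluate $s^2h_s(s)$ by l'Hospital's rule using the limits $\4{w}_s\to\frac{2(n-1)(n-2-nm)}{(1-m)\4{\beta}}$ and $\4{w}(s)/s\to$ the same constant from Theorem \ref{w-limit-thm}, which yields exactly $\frac{1-2m}{1-m}\cdot\frac{n-1}{\4{\beta}}=\frac{(n-1)(1-2m)}{(1-m)\4{\beta}}$. The only cosmetic difference is that the paper applies l'Hospital directly to the ratio $\int_1^s\frac{\4{w}_z^2}{\4{w}}\psi\,dz\,\big/\,s^{-2}\psi(s)$ without your $\ve$-sandwiching of the forcing, which is legitimate since the ratio of derivatives has a limit and $s^{-2}\psi(s)\to\infty$.
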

\begin{proof}
Let $\psi$ be as given by \eqref{psi-defn}.
Since $m=\frac{n-2}{n+2}$, multiplying  \eqref{hss} by $\psi$ and integrating over $(1,s)$,
\begin{equation}\label{h'-ineqn6}
s^2h_s(s)=\frac{\psi (s_1)h_s(s_1)+\frac{(1-2m)}{(1-m)}\int_1^s\frac{\4{w}_s^2(z)}{\4{w}(z)}\psi (z)\,dz}{s^{-2}\psi (s)}\quad\forall s\ge 1.
\end{equation}
Hence by \eqref{psi-expression-ratio-limit} and the l'Hospital rule,
\begin{equation}\label{f-frac-limit1}
\lim_{s\to\infty}\frac{s^2}{\psi (s)}=\lim_{s\to\infty}\frac{2s}{\psi (s)\left(\frac{2(n-2-nm)}{1-m}s+\frac{\4{\beta}}{n-1}h(s)\right)}=0.
\end{equation}
By Theorem \ref{w-limit-thm} and the l'Hospital rule,
\begin{align}\label{f-frac-limit2}
\lim_{s\to\infty}\frac{\int_1^s\frac{\4{w}_s^2(z)}{\4{w}(z)}\psi (z)\,dz}{s^{-2}\psi (s)}
=&\lim_{s\to\infty}\frac{\frac{\4{w}_s^2(s)}{\4{w}(s)}\psi (s)}{-2s^{-3}\psi (s)+s^{-2}\psi (s)\left(\frac{2(n-2-nm)}{1-m}s+\frac{\4{\beta}}{n-1}h(s)\right)}\notag\\
=&\frac{1-m}{2(n-2-nm)}\lim_{s\to\infty}\frac{\4{w}_s^2(s)}{\4{w}(s)/s}\notag\\
=&\frac{1-m}{2(n-2-nm)}\lim_{s\to\infty}\4{w}_s\notag\\
=&\frac{n-1}{\4{\beta}}.
\end{align}
Letting $s\to\infty$ in \eqref{h'-ineqn6}, by \eqref{f-frac-limit1} and \eqref{f-frac-limit2}
we get \eqref{h'-limit5} and the lemma follows.
\end{proof}

For $m\ne\frac{n-2}{n+2}$, let 
\begin{equation}\label{def h1}
h_1(s)=h(s)-\frac{(n-1) \left( n-2 -(n+2)m \right) }{(1-m)\widetilde \beta} \log s.
\end{equation}
Then, $h_1$ satisfies
\begin{equation}\label{h1ss}
\begin{aligned}
h_{1,ss}&+\left(\frac{2(n-2-nm)}{(1-m)}s + \frac{\4{\beta}}{n-1}h- \frac{n-2-(n+2)m}{1-m} \right) h_{1,s}\\
&=\frac{1-2m}{1-m}\cdot \frac{\4{w}_s^2}{\4{w}}+a_3\left [\frac{1}{s^2}- \left(\frac{\4{\beta}}{n-1}h- \frac{n-2-(n+2)m}{1-m} \right) \frac{1}{s} \right] \quad \text{ in } \re,
\end{aligned}
\end{equation}
where $$a_3=\frac{(n-1) \left( n-2 -(n+2)m \right) }{(1-m)\widetilde \beta}.$$

\begin{lem}\label{h1s}
Let $n\ge3$, $0<m<\frac{n-2}{n}$, $m \neq \frac{n-2}{n+2}$  and $\tilde \alpha$, $\tilde \beta$ satisfy \eqref{alpha-beta-and-tilde-relation}. Then
\begin{equation}\label{h1'-limit}
\underset{\substack{s \to \infty}}{\lim}\frac{s^2 h_{1,s}(s)}{\log s}=- \frac{(n-1) \left( n-2 -(n+2)m \right)^2 }{2(n-2-nm)(1-m)\widetilde \beta}.
\end{equation}
\end{lem}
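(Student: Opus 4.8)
The plan is to analyze the asymptotics of $s^2 h_{1,s}(s)/\log s$ by integrating the linear ODE \eqref{h1ss} against an integrating factor, exactly as in the proof of Lemma \ref{hs}, but now tracking the next-order term. The key structural fact is that \eqref{h1ss} is a first-order linear ODE in $h_{1,s}$ whose coefficient of $h_{1,s}$ is the same leading operator $\frac{2(n-2-nm)}{1-m}s + \frac{\4\beta}{n-1}h - \frac{n-2-(n+2)m}{1-m}$ already encountered. So I would reuse the integrating factor $\psi$ from \eqref{psi-defn}. Multiplying \eqref{h1ss} by $\psi$ turns the left side into $(\psi h_{1,s})'$, and integrating over $(s_1,s)$ yields an expression for $s^2 h_{1,s}(s)$ as a ratio of $\psi$-weighted integrals of the right-hand side over $s^{-2}\psi(s)$ (with an initial-data term that will be negligible).

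Next I would identify which part of the right-hand side of \eqref{h1ss} dominates. There are three contributions: the nonlinear term $\frac{1-2m}{1-m}\frac{\4w_s^2}{\4w}$, the term $a_3/s^2$, and the term $-a_3\bigl(\frac{\4\beta}{n-1}h-\frac{n-2-(n+2)m}{1-m}\bigr)\frac1s$. By \eqref{w-ratio-0} the first term tends to $0$, so against the $\psi$-weight it contributes lower order. For the third term, I would substitute the known asymptotics $h(s)=o(s)$ from Theorem \ref{w-limit-thm} and, more precisely, $h(s)\sim \frac{(n-1)(n-2-(n+2)m)}{(1-m)\4\beta}\log s$ from Lemma \ref{hs} (i.e. \eqref{def h1} says $h = a_3\log s + h_1$ with $h_1$ of lower order), so that $\frac{\4\beta}{n-1}h\sim \frac{n-2-(n+2)m}{1-m}\log s$, making this term behave like $-a_3\cdot\frac{n-2-(n+2)m}{1-m}\cdot\frac{\log s}{s}$. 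Dividing the whole integrated expression by $\log s$ and applying l'Hospital's rule twice — once to evaluate $\lim_{s\to\infty}\int_{s_1}^s(\cdot)\psi\,dz\,/\,(s^{-2}\psi(s))$ using \eqref{psi-expression-ratio-limit} and once to handle the extra $\log s$ — should collapse the ratio to a constant. The factor $\frac{1-m}{2(n-2-nm)}$ coming from the l'Hospital evaluation of the $\psi$-integral (cf. \eqref{ratio-limit2}, \eqref{f-frac-limit2}) times the leading coefficient $-a_3\cdot\frac{n-2-(n+2)m}{1-m}$ of the dominant $\frac{\log s}{s}$-term should reproduce the claimed limit $-\frac{(n-1)(n-2-(n+2)m)^2}{2(n-2-nm)(1-m)\4\beta}$.

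Concretely, I would establish the chain
\begin{align*}
\lim_{s\to\infty}\frac{s^2 h_{1,s}(s)}{\log s}
&=\lim_{s\to\infty}\frac{1}{\log s}\cdot\frac{\psi(s_1)h_{1,s}(s_1)+\int_{s_1}^s\bigl[\tfrac{1-2m}{1-m}\tfrac{\4w_z^2}{\4w}+\tfrac{a_3}{z^2}-a_3(\tfrac{\4\beta}{n-1}h-\tfrac{n-2-(n+2)m}{1-m})\tfrac1z\bigr]\psi(z)\,dz}{s^{-2}\psi(s)}\\
&=\frac{1-m}{2(n-2-nm)}\cdot\lim_{s\to\infty}\frac{1}{\log s}\Bigl(-a_3\bigl(\tfrac{\4\beta}{n-1}h(s)-\tfrac{n-2-(n+2)m}{1-m}\bigr)\tfrac{s^2}{s}\Bigr),
\end{align*}
where only the $\frac1z$-term survives after the l'Hospital reduction because the other two contributions are $o(\log s\cdot s^{-2}\psi(s))$. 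Then inserting $\frac{\4\beta}{n-1}h(s)\sim\frac{n-2-(n+2)m}{1-m}\log s$ gives the stated value.

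The main obstacle I anticipate is the bookkeeping in the repeated l'Hospital computations: one must verify rigorously that the nonlinear term $\frac{\4w_s^2}{\4w}$ and the $a_3/s^2$ term are genuinely subdominant after weighting by $\psi$ and dividing by $\log s$, and that the surviving $\frac1s$-term really contributes at order $\log s \cdot s^{-2}\psi(s)$ rather than being swamped or undercounted. This requires combining \eqref{psi-expression-ratio-limit}, the sharp two-term asymptotics of $h$ from Lemma \ref{hs}, and careful control of the $\psi$-weighted integrals; the delicate point is that we need the \emph{next} order of $h$ beyond its leading logarithmic growth to be controlled well enough (namely $h_1(s)=o(\log s)$) so that the substitution $\frac{\4\beta}{n-1}h\sim\frac{n-2-(n+2)m}{1-m}\log s$ is legitimate to leading order inside the integral. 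Since Lemma \ref{hs} already furnishes exactly this leading-order information, the argument should go through along the lines of Lemma 2.4 of \cite{Hs4}.
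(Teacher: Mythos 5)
Your proposal is correct and follows essentially the same route as the paper's proof: the paper likewise multiplies \eqref{h1ss} by the integrating factor $\psi$ of \eqref{psi-defn}, uses Theorem \ref{w-limit-thm} and Lemma \ref{hs} to show that the full right-hand side $\psi_2$ of \eqref{h1ss} satisfies $\lim_{s\to\infty} s\psi_2(s)/\log s=-a_4$ with $a_4=\frac{(n-1)(n-2-(n+2)m)^2}{(1-m)^2\widetilde\beta}$ (the nonlinear term and the $1/s^2$ term being subdominant, the $1/s$ term dominant after substituting $h(s)\sim a_3\log s$), sandwiches the equation between $(-a_4\mp\varepsilon)\frac{\log s}{s}$, and evaluates $\lim_{s\to\infty}\int_{s_1}^s\frac{\log z}{z}\psi(z)\,dz\,\big/\,\bigl(s^{-2}\psi(s)\log s\bigr)=\frac{1-m}{2(n-2-nm)}$ by a single application of l'Hospital's rule. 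The only blemish is notational: in your displayed chain the surviving term should carry the factor $s^2\cdot\frac{1}{s}\cdot\frac{1}{s}=1$ (one $\frac{1}{s}$ from the term itself and one from $\psi'/\psi\sim\frac{2(n-2-nm)}{1-m}s$ in the l'Hospital step), not $\frac{s^2}{s}$, which as literally written would make the limit diverge; your surrounding prose and final constants have the correct accounting.
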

\begin{proof} 
We will use a modification of  the proof of Lemma 2.5 of \cite{Hs4} to prove this lemma. Let 
\begin{equation*}
\psi_2(s)=\frac{1-2m}{1-m}\cdot \frac{\4{w}_s^2}{\4{w}}+a_3\left [\frac{1}{s^2}- \left(\frac{\4{\beta}}{n-1}h- \frac{n-2-(n+2)m}{1-m} \right) \frac{1}{s} \right].
\end{equation*}
Then by  Theorem \ref{w-limit-thm} and Lemma \ref{hs},
\begin{equation}\label{H-infty-limit}
\lim_{s\to\infty}\frac{s\psi_2(s)}{\log s}=-\frac{(n-2-(n+2)m)}{1-m}a_3=-a_4,
\end{equation}
where
\begin{equation}
a_4=\frac{(n-1)(n-2-(n+2)m)^2}{(1-m)^2\4{\beta}}.
\end{equation}
By \eqref{h1ss} and \eqref{H-infty-limit} for any $0<\3<a_4/2$ there exists a constant $s_1>1$ such that
\begin{align}\label{h1-ineqn}
 (-a_4-\3)\frac{\log s}{s}\le &h_{1,ss}+\left(\frac{2(n-2-nm)}{1-m}s+\frac{\4{\beta}}{n-1}h-\frac{n-2-(n+2)m}{1-m}\right)h_{1,s}\notag\\
\le &(-a_4+\3)\frac{\log s}{s}\quad\forall s\ge s_1.
\end{align}
Let $\psi$ be given by \eqref{psi-defn}. Multiplying \eqref{h1-ineqn} by $\psi$ and integrating over $(s_1,s)$,
\begin{align}\label{h1'-ineqn3}
&\left(\frac{\psi(s_1)h_{1,s}(s_1)+(-a_4-\3)\int_{s_1}^s\frac{\log z}{z}\psi(z)\,dz}{s^{-2}\psi(s)\log s}\right)\notag\\
\le& \frac{s^2h_{1,s}(s)}{\log s}
\le\left(\frac{\psi(s_1)h_{1,s}(s_1)+(-a_4+\3)\int_{s_1}^s\frac{\log z}{z}\psi(z)\,dz}{s^{-2}\psi(s)\log s}\right)\quad\forall s\ge s_1.
\end{align}
By the l'Hospital rule and Lemma \ref{hs},
\begin{align}\label{psi-expression-ratio-limit2}
&\lim_{s\to\infty}\frac{\int_{s_1}^s\frac{\log z}{z}\psi(z)\,dz}{s^{-2}\psi(s)\log s}\notag\\
=&\lim_{s\to\infty}\frac{\frac{\log s}{s}\psi(s)}{s^{-3}\psi (s)(1-2\log s)+s^{-2}\psi(s)\left(\frac{2(n-2-nm)}{1-m}s+\frac{\4{\beta}}{n-1}h(s)-\frac{n-2-(n+2)m}{1-m}\right)\log s}\notag\\
=&\frac{1-m}{2(n-2-nm)}.
\end{align}
Letting first $s\to\infty$ and then $\3\to 0$ in \eqref{h1'-ineqn3}, by \eqref{psi-expression-ratio-limit2} we get \eqref{h1'-limit}
and the lemma follows.
\end{proof}

\begin{cor}\label{h1}
Let $n\ge3$, $0<m<\frac{n-2}{n}$, $m \neq \frac{n-2}{n+2}$ and $\tilde \alpha$, $\tilde \beta$ satisfy \eqref{alpha-beta-and-tilde-relation}. Then
\begin{equation}\label{h1-infty-limit}
K(\eta, \tilde \beta):= \lim_{s\to \infty} h_1(s) \in \re \quad \text{ exists}
\end{equation}
and
\begin{equation}\label{h1-expansion}
\begin{aligned}
h_1(s)=&K(\eta, \tilde \beta)+\frac{(n-1) \left( n-2 -(n+2)m \right)^2 }{2(n-2-nm)(1-m)\tilde \beta}\left(\frac{1+\log s}{s} \right)+o\left(\frac{1+\log s}{s} \right)\\
& \text{ as } s \to \infty.
\end{aligned}
\end{equation}
\end{cor}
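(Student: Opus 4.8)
The plan is to extract both conclusions directly from the first-order information in Lemma \ref{h1s} by integrating $h_{1,s}$ over $(s,\infty)$. For brevity write $C=\frac{(n-1)(n-2-(n+2)m)^2}{2(n-2-nm)(1-m)\widetilde\beta}$, so that Lemma \ref{h1s} reads $\lim_{s\to\infty} s^2 h_{1,s}(s)/\log s=-C$, equivalently $h_{1,s}(s)=-C\,\frac{\log s}{s^2}\,(1+o(1))$ as $s\to\infty$. The entire argument rests on the fact that $\frac{\log s}{s^2}$ is integrable at $+\infty$, so this pointwise asymptotic controls the whole tail of $h_{1,s}$.

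First I would establish \eqref{h1-infty-limit}. Since $h_{1,s}(s)=-C\,\frac{\log s}{s^2}\,(1+o(1))$ and $\int^{\infty}\frac{\log z}{z^2}\,dz<\infty$, the function $h_{1,s}$ lies in $L^1((s_0,\infty))$ for any large $s_0$. Writing $h_1(s)=h_1(s_0)+\int_{s_0}^{s}h_{1,s}(z)\,dz$, the integral converges as $s\to\infty$, so $K(\eta,\widetilde\beta):=\lim_{s\to\infty}h_1(s)$ exists and is finite, which is \eqref{h1-infty-limit}.

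Next I would prove the expansion \eqref{h1-expansion} by computing $K(\eta,\widetilde\beta)-h_1(s)=\int_s^{\infty}h_{1,s}(z)\,dz$ to leading order. The elementary identity, obtained by integration by parts with antiderivative $-\frac{1+\log z}{z}$ of $\frac{\log z}{z^2}$, is $\int_s^{\infty}\frac{\log z}{z^2}\,dz=\frac{1+\log s}{s}$. Splitting $h_{1,s}(z)=-C\frac{\log z}{z^2}+\theta(z)\frac{\log z}{z^2}$ with $\theta(z)\to 0$, the main term contributes exactly $-C\,\frac{1+\log s}{s}$. For the remainder, given $\delta>0$ I would pick $s_1$ with $|\theta(z)|<\delta$ for $z\ge s_1$, so that for $s\ge s_1$,
$$
\left|\int_s^{\infty}\theta(z)\frac{\log z}{z^2}\,dz\right|\le \delta\int_s^{\infty}\frac{\log z}{z^2}\,dz=\delta\,\frac{1+\log s}{s}.
$$
This shows the remainder is $o\!\left(\frac{1+\log s}{s}\right)$, and rearranging $K(\eta,\widetilde\beta)-h_1(s)=-C\frac{1+\log s}{s}+o\!\left(\frac{1+\log s}{s}\right)$ yields \eqref{h1-expansion} with the stated constant.

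The argument is essentially routine once Lemma \ref{h1s} is available, so I do not expect a serious obstacle. The only point requiring care is upgrading the purely qualitative $o(1)$ in the derivative asymptotics to the quantitative error $o\!\left(\frac{1+\log s}{s}\right)$ in the expansion; the uniform tail estimate above is exactly what bridges this gap. A final cosmetic check is that the antiderivative $-\frac{1+\log z}{z}$ produces precisely the combination $\frac{1+\log s}{s}$ in the statement, so both the $(\log s)/s$ and $1/s$ contributions are captured at this order.
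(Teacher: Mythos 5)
Your proposal is correct and follows essentially the same route as the paper: both deduce existence of the limit from integrability of $h_{1,s}$ (via Lemma \ref{h1s} and the bound on $\int^\infty \frac{\log z}{z^2}\,dz$), and both obtain \eqref{h1-expansion} by writing $K(\eta,\tilde\beta)-h_1(s)=\int_s^\infty h_{1,s}(z)\,dz$ and using $\int_s^\infty \frac{\log z}{z^2}\,dz=\frac{1+\log s}{s}$. Your explicit splitting $h_{1,s}=-C\frac{\log z}{z^2}+\theta(z)\frac{\log z}{z^2}$ merely spells out the justification for pulling the $o(1)$ factor outside the integral, which the paper does implicitly.
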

\begin{proof}
We will use a modification of the proof of Corollary 2.6 of \cite{Hs4} to prove this lemma.
By Lemma \ref{h1s} there exists a constant $C_1>0$ such that
\begin{align}\label{h1-uniform-bd}
&\left|\frac{s^2h_{1,s}(s)}{\log s}\right|\le C_1\quad\forall s\ge 2\notag\\
\Rightarrow\quad&|h_1(s_1)-h_1(s_2)|\le\int_{s_1}^{s_2}|h_{1,s}(z)|\,dz\le C_1\int_{s_1}^{s_2}\frac{\log z}{z^2}\,dz\le C\int_{s_1}^{s_2}\frac{1}{z^{3/2}}\,dz\le\frac{C'}{\sqrt{s_1}}
\end{align}
for any $s_2>s_1\ge 2$. Hence \eqref{h1-infty-limit} holds. Then by Lemma \ref{h1s},
\begin{align*}
K(\eta, \tilde \beta)-h_1(s)=&\int_s^{\infty}h_{1,s}(z)\,dz\notag\\
=&\left(-\frac{(n-1) \left( n-2 -(n+2)m \right)^2 }{2(n-2-nm)(1-m)\tilde \beta}+o(1)\right)\int_s^{\infty}\frac{\log z}{z^2}\,dz\notag\\
=&\left(-\frac{(n-1) \left( n-2 -(n+2)m \right)^2 }{2(n-2-nm)(1-m)\tilde \beta}+o(1)\right)\left(\frac{1+\log s}{s} \right)\quad\mbox{ as }s\to\infty
\end{align*}
and \eqref{h1-expansion} follows.
\end{proof}

\begin{cor}\label{h-limit6-lem}
Let $n\ge3$, $0<m<\frac{n-2}{n}$, $m=\frac{n-2}{n+2}$ and $\tilde \alpha$, $\tilde \beta$ satisfy \eqref{alpha-beta-and-tilde-relation}. Then 
\begin{equation}\label{h-infty-limit}
K(\eta, \tilde \beta):= \lim_{s\to \infty} h(s) \in \re \quad \text{ exists}
\end{equation}
and
\begin{equation}\label{h-expansion-infty1}
h(s)=K(\eta,\4{\beta})-\frac{(n-1)(1-2m)}{(1-m)\4{\beta}}\cdot\frac{1}{s}+o(s^{-1})\quad\mbox{ as }s\to\infty.
\end{equation}
\end{cor}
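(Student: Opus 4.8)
The plan is to mirror the proof of Corollary \ref{h1} but adapted to the case $m=\frac{n-2}{n+2}$, where now the relevant quantity is $h$ itself (there is no logarithmic correction term $h_1$) and the decay of $h_s$ is governed by Lemma \ref{h-limit2}. First I would invoke Lemma \ref{h-limit2}, which gives $\lim_{s\to\infty}s^2h_s(s)=\frac{(n-1)(1-2m)}{(1-m)\widetilde\beta}$. In particular there is a constant $C_1>0$ and some $s_0\ge 1$ with $|s^2h_s(s)|\le C_1$ for all $s\ge s_0$, so that $|h_s(s)|\le C_1 s^{-2}$ for large $s$.

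The existence of the limit \eqref{h-infty-limit} then follows from a Cauchy criterion: for $s_2>s_1\ge s_0$ one estimates
\begin{equation*}
|h(s_1)-h(s_2)|\le\int_{s_1}^{s_2}|h_s(z)|\,dz\le C_1\int_{s_1}^{s_2}\frac{dz}{z^2}\le\frac{C_1}{s_1}\to 0\quad\mbox{ as }s_1\to\infty.
\end{equation*}
Hence $\{h(s)\}$ is Cauchy as $s\to\infty$ and converges to some finite constant, which I define to be $K(\eta,\widetilde\beta)$. This establishes \eqref{h-infty-limit}.

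For the sharper asymptotic expansion \eqref{h-expansion-infty1}, I would write, using that $h(s)\to K(\eta,\widetilde\beta)$,
\begin{equation*}
K(\eta,\widetilde\beta)-h(s)=\int_s^{\infty}h_s(z)\,dz.
\end{equation*}
By Lemma \ref{h-limit2}, $h_s(z)=\left(\frac{(n-1)(1-2m)}{(1-m)\widetilde\beta}+o(1)\right)z^{-2}$ as $z\to\infty$, so substituting this in and integrating $\int_s^\infty z^{-2}\,dz=s^{-1}$ yields
\begin{equation*}
K(\eta,\widetilde\beta)-h(s)=\left(\frac{(n-1)(1-2m)}{(1-m)\widetilde\beta}+o(1)\right)\frac{1}{s}\quad\mbox{ as }s\to\infty,
\end{equation*}
which rearranges to \eqref{h-expansion-infty1}. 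The only mild care needed is the standard justification that the $o(1)$ factor inside the integrand may be pulled outside against the integrable tail $z^{-2}$, which follows by splitting the integral and using dominated convergence or an elementary $\varepsilon$-estimate.

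I do not anticipate a genuine obstacle here: the statement is a clean consequence of the pointwise asymptotics of $h_s$ from Lemma \ref{h-limit2}, and the whole argument is parallel to Corollary \ref{h1}, simply with the cleaner decay rate $s^{-2}$ (rather than $s^{-2}\log s$) that is special to the conformal exponent $m=\frac{n-2}{n+2}$. The step most worth double-checking is the bookkeeping of the constant $\frac{(n-1)(1-2m)}{(1-m)\widetilde\beta}$, to confirm the sign and that it matches the coefficient appearing in \eqref{h'-limit5}.
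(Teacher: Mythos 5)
Your proof is correct and follows essentially the same route as the paper: the paper also establishes \eqref{h-infty-limit} via the Cauchy-criterion argument of Corollary \ref{h1} (using the bound $|h_s(z)|\le C z^{-2}$ from Lemma \ref{h-limit2}), and then obtains \eqref{h-expansion-infty1} by writing $K(\eta,\4{\beta})-h(s)=\int_s^{\infty}h_s(z)\,dz=\left(\frac{(n-1)(1-2m)}{(1-m)\4{\beta}}+o(1)\right)\frac{1}{s}$. The sign bookkeeping you flagged checks out, so nothing further is needed.
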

\begin{proof}
By Lemma \ref{h-limit2} and an argument similar to the proof of Corollary \ref{h1}, 
\eqref{h-infty-limit} holds. Then by Lemma \ref{h-limit2},
\begin{align*}
K(\eta, \tilde \beta)-h(s)=&\int_s^{\infty}h_s(z)\,dz\notag\\
=&\left(\frac{(n-1)(1-2m)}{(1-m)\tilde{\beta}}+o(1)\right)\int_s^{\infty}\frac{1}{z^2}\,dz\notag\\
=&\left(\frac{(n-1)(1-2m)}{(1-m)\tilde{\beta}}+o(1)\right)\frac{1}{s}\quad\mbox{ as }s\to\infty
\end{align*}
and \eqref{h-expansion-infty1} follows.
\end{proof}

For $m\ne\frac{n-2}{n+2}$, let
\begin{equation}\label{h2-defn}
h_2(s)=h_1(s)-K(\eta, \widetilde \beta)-\frac{(n-1) \left( n-2 -(n+2)m \right)^2 }{2(n-2-nm)(1-m)\widetilde \beta}\left(\frac{1+\log s}{s} \right).
\end{equation}
Then 
\begin{equation}\label{h2ss}
\begin{aligned}
&h_{2,ss}+\left(\frac{2(n-2-nm)}{(1-m)}s + \frac{\widetilde \beta}{n-1}h- \frac{n-2-(n+2)m}{1-m} \right) h_{2,s}\\
=&\frac{1-2m}{1-m} \frac{\widetilde w_s^2}{\widetilde w}+\frac{(n-1) \left( n-2 -(n+2)m \right)^2 }{(1-m)^2\widetilde \beta} \cdot \frac{1}{s}
- \frac{n-2-(n+2)m}{1-m}\frac{h_1}{s}+\frac{a_3}{s^2}\\
&+\frac{(n-1) \left( n-2 -(n+2)m \right)^2 }{2(n-2-nm)(1-m)\widetilde \beta}\left[ \frac{(1-2\log s)}{s^3}+\left( \frac{\widetilde \beta}{n-1}h(s) -\frac{n-2-(n+2)m}{1-m}\right)\frac{\log s}{s^2} \right]\\
=&:\psi_3(s).
\end{aligned}
\end{equation}

\begin{lem}\label{lim s2h2s}
Let $n\ge3$, $0<m<\frac{n-2}{n}$, $m \neq \frac{n-2}{n+2}$ and $\tilde \alpha$, $\tilde \beta$ satisfy \eqref{alpha-beta-and-tilde-relation}. Then
\begin{equation}
\lim_{s\to \infty} s^2 h_{2,s}(s)=\frac{(1-m)a_2(\eta, \widetilde \beta)}{2(n-2-nm)\widetilde \beta},
\end{equation}
where $a_2(\eta, \widetilde \beta)$ is given by \eqref{a2-defn}
with $K(\eta, \widetilde \beta)$ given by \eqref{h1-infty-limit}.
\end{lem}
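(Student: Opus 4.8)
The plan is to follow the same integrating-factor-plus-l'Hospital scheme already used in Lemma \ref{hs} and Lemma \ref{h1s}, now applied to equation \eqref{h2ss} for $h_2$. The first and most important step is to pin down the leading asymptotics of the forcing term $\psi_3(s)$, and in particular to prove that
\[
\lim_{s\to\infty}s\,\psi_3(s)=\frac{a_2(\eta,\widetilde{\beta})}{\widetilde{\beta}}.
\]
This is a term-by-term computation on the expression for $\psi_3$ given in \eqref{h2ss}. Writing $c_0=\frac{2(n-1)(n-2-nm)}{(1-m)\widetilde{\beta}}$, Theorem \ref{w-limit-thm} gives $\widetilde{w}_s(s)\to c_0$ and $\widetilde{w}(s)/s\to c_0$, so that $s\cdot\frac{1-2m}{1-m}\frac{\widetilde{w}_s^2}{\widetilde{w}}\to\frac{1-2m}{1-m}c_0$, which is exactly the first summand of $a_2/\widetilde{\beta}$. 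The explicit $\tfrac1s$-term of $\psi_3$ contributes the second summand, and, invoking $h_1(s)\to K(\eta,\widetilde{\beta})$ from Corollary \ref{h1}, the term $-\frac{n-2-(n+2)m}{1-m}\frac{h_1(s)}{s}$ contributes the third summand $-\frac{n-2-(n+2)m}{1-m}K(\eta,\widetilde{\beta})$. Every remaining term carries at least an extra factor $\frac1s$ or $\frac{\log s}{s}$ (after bounding $h(s)=O(\log s)$ via Lemma \ref{hs}), hence vanishes upon multiplication by $s$; matching against \eqref{a2-defn} yields the displayed limit.

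Once this is in hand the rest is routine. The coefficient of $h_{2,s}$ in \eqref{h2ss} equals $\psi_s/\psi$ for the integrating factor $\psi$ of \eqref{psi-defn}, so the left side of \eqref{h2ss} is $(\psi h_{2,s})_s/\psi$. Multiplying \eqref{h2ss} by $\psi$ and integrating over $(s_1,s)$ for a fixed large $s_1$ gives
\[
s^2h_{2,s}(s)=\frac{\psi(s_1)h_{2,s}(s_1)+\int_{s_1}^s\psi(z)\psi_3(z)\,dz}{s^{-2}\psi(s)}.
\]
Because the dominant exponent in $\psi$ is $\frac{n-2-nm}{1-m}s^2$ with $n-2-nm>0$, we have $s^{-2}\psi(s)\to\infty$; thus the boundary term tends to $0$ and l'Hospital's rule applies to the remaining quotient. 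Differentiating numerator and denominator produces
\[
\frac{\psi(s)\psi_3(s)}{s^{-2}\psi(s)\left(\frac{2(n-2-nm)}{1-m}s+\frac{\widetilde{\beta}}{n-1}h(s)-\frac{n-2-(n+2)m}{1-m}-\frac{2}{s}\right)},
\]
and since $s\psi_3(s)\to a_2/\widetilde{\beta}$ while the bracket is asymptotic to $\frac{2(n-2-nm)}{1-m}s$, this ratio tends to $\frac{1-m}{2(n-2-nm)}\cdot\frac{a_2(\eta,\widetilde{\beta})}{\widetilde{\beta}}$, which is exactly the asserted value.

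The genuinely delicate point is the first step, the bookkeeping in $\lim_{s\to\infty}s\psi_3(s)$. One must check that the three surviving contributions combine to precisely $a_2(\eta,\widetilde{\beta})/\widetilde{\beta}$ as in \eqref{a2-defn}, and—just as importantly—that every other term decays fast enough that $s$ times it still tends to $0$. The most dangerous such piece is $\big(\frac{\widetilde{\beta}}{n-1}h(s)-\frac{n-2-(n+2)m}{1-m}\big)\frac{\log s}{s^2}$, whose prefactor itself grows like $\log s$; controlling it forces one to use the sharp rate $h(s)\sim\frac{(n-1)(n-2-(n+2)m)}{(1-m)\widetilde{\beta}}\log s$ from Lemma \ref{hs} (and $h_1(s)\to K$ from Corollary \ref{h1}) rather than the crude bound $h(s)=o(s)$. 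With these sharp rates in place, the rest is a direct transcription of the argument in Lemma \ref{h1s}.
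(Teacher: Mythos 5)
Your proposal is correct and takes essentially the same approach as the paper: both hinge on the key limit $\lim_{s\to\infty}s\psi_3(s)=a_2(\eta,\widetilde{\beta})/\widetilde{\beta}$ (via Theorem \ref{w-limit-thm}, Lemma \ref{hs}, and Corollary \ref{h1}), then use the integrating factor $\psi$ of \eqref{psi-defn} and l'Hospital's rule to conclude. The only cosmetic difference is that the paper first sandwiches $\psi_3$ between $\bigl((a_2/\widetilde{\beta})\pm\varepsilon\bigr)/s$ and integrates, whereas you integrate $\psi\psi_3$ exactly and apply l'Hospital directly to that quotient; both are valid.
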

\begin{proof}
Let $\psi$ be as given by \eqref{psi-defn}.
By Theorem \ref{w-limit-thm}, Lemma \ref{hs}, and  Corollary \ref{h1},
\begin{equation}
\lim_{s\to\infty}s\psi_3(s)=\frac{a_2(\eta, \widetilde \beta)}{\widetilde \beta},
\end{equation}
where $a_2(\eta, \widetilde \beta)$ is given by \eqref{a2-defn} with $K(\eta, \widetilde \beta)$ given by \eqref{h1-infty-limit}. Then for any $0<\3<1$, there exists a constant $s_2>0$ such that
\begin{align}\label{h2-ineqn1}
\frac{(a_2(\eta, \widetilde \beta)/\4{\beta})-\3}{s}\le&h_{2,ss}+\left(\frac{2(n-2-nm)}{(1-m)}s + \frac{\widetilde \beta}{n-1}h- \frac{n-2-(n+2)m}{1-m} \right) h_{2,s}\notag\\
\le&\frac{(a_2(\eta, \widetilde \beta)/\4{\beta})+\3}{s}\quad\forall s\ge s_2.
\end{align}
Multiplying \eqref{h2-ineqn1} by $\psi$ and integrating over $(s_2,s)$,
\begin{align}\label{h2s-ineqn}
&\left(\frac{a_2(\eta, \widetilde \beta)}{\4{\beta}}-\3\right)\frac{\psi(s_2)h_{2,s}(s_2)+\int_{s_2}^2z^{-1}\psi(z)\,dz}{s^{-2}\psi(s)}
\le s^2h_{2,s}(s)\notag\\
&\qquad\qquad\qquad\qquad\qquad\qquad \le\left(\frac{a_2(\eta, \widetilde \beta)}{\4{\beta}}+\3\right)\frac{\psi(s_2)h_{2,s}(s_2)+\int_{s_2}^2z^{-1}\psi(z)\,dz}{s^{-2}\psi(s)}\quad\forall s\ge s_2.
\end{align}
Letting first $s\to\infty$ and then $\3\to 0$ in \eqref{h2s-ineqn}, by Lemma \ref{hs} and the l'Hospital rule,
\begin{align*}
&\lim_{s\to\infty}s^2h_{2,s}(s)\notag\\
=&\frac{a_2(\eta, \widetilde \beta)}{\4{\beta}}\lim_{s\to\infty}\frac{s^{-1}\psi(s)}{-2s^{-3}\psi (s)+s^{-2}\psi(s)\left(\frac{2(n-2-nm)}{1-m}s+\frac{\4{\beta}}{n-1}h(s)-\frac{n-2-(n+2)m}{1-m}\right)}\notag\\
=&\frac{(1-m)a_2(\eta, \widetilde \beta)}{2(n-2-nm)\widetilde \beta}
\end{align*}
and the lemma follows.
\end{proof}

\begin{cor}\label{lim s2-tail-small-at-infty-lem}
Let $n\ge3$, $0<m<\frac{n-2}{n}$, $m \neq \frac{n-2}{n+2}$ and $\tilde \alpha$, $\tilde \beta$ satisfy \eqref{alpha-beta-and-tilde-relation}. Then
\begin{equation}\label{h2-expansion}
h_2(s)=-\frac{(1-m)a_2(\eta, \widetilde \beta)}{2(n-2-nm)\widetilde \beta}\cdot\frac{1}{s}+o\left(s^{-1}\right)\quad\mbox{ as }s\to\infty,
\end{equation}
where $a_2(\eta, \widetilde \beta)$ is given by \eqref{a2-defn} with $K(\eta, \widetilde \beta)$ given by \eqref{h1-infty-limit}.
\end{cor}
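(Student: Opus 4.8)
The plan is to mimic the proofs of Corollary \ref{h1} and Corollary \ref{h-limit6-lem}, obtaining the expansion by integrating the derivative estimate supplied by the preceding Lemma \ref{lim s2h2s}. Abbreviate
\[
C:=\frac{(1-m)a_2(\eta,\widetilde\beta)}{2(n-2-nm)\widetilde\beta},
\]
with $a_2(\eta,\widetilde\beta)$ as in \eqref{a2-defn} and $K(\eta,\widetilde\beta)$ as in \eqref{h1-infty-limit}, so that Lemma \ref{lim s2h2s} reads $\lim_{s\to\infty}s^2h_{2,s}(s)=C$.

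First I would record that $h_2(s)\to 0$ as $s\to\infty$. This is immediate from the definition \eqref{h2-defn} together with the expansion \eqref{h1-expansion} of Corollary \ref{h1}: subtracting the constant $K(\eta,\widetilde\beta)$ and the explicit $\tfrac{1+\log s}{s}$ term from $h_1(s)$ leaves
\[
h_2(s)=o\!\left(\frac{1+\log s}{s}\right)\longrightarrow 0\qquad\text{as }s\to\infty.
\]

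Next, since $s^2h_{2,s}(s)$ tends to the finite limit $C$, there is a constant $C'>0$ with $|h_{2,s}(z)|\le C'z^{-2}$ for all large $z$; hence $\int_s^\infty h_{2,s}(z)\,dz$ converges absolutely, and by the vanishing of $h_2$ at infinity,
\[
-h_2(s)=\int_s^\infty h_{2,s}(z)\,dz=\int_s^\infty\frac{z^2h_{2,s}(z)}{z^2}\,dz.
\]
Substituting $z^2h_{2,s}(z)=C+o(1)$ and using $\int_s^\infty z^{-2}\,dz=s^{-1}$, I would conclude
\[
-h_2(s)=(C+o(1))\,s^{-1}\qquad\text{as }s\to\infty,
\]
which is precisely \eqref{h2-expansion}.

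There is no serious obstacle, as the corollary is essentially an integrated version of Lemma \ref{lim s2h2s}; the only points needing care are the two just used, namely (i) deducing $h_2(s)\to 0$ from Corollary \ref{h1}, handled as above, and (ii) justifying that the $o(1)$ factor may be pulled out of the integral. The latter is the standard splitting argument: given $\varepsilon>0$, choose $s_0$ so that $|z^2h_{2,s}(z)-C|<\varepsilon$ for all $z\ge s_0$, and estimate $\bigl|\int_s^\infty (z^2h_{2,s}(z)-C)\,z^{-2}\,dz\bigr|\le\varepsilon\,s^{-1}$ for $s\ge s_0$, exactly as in the proofs of Corollary \ref{h1} and Corollary \ref{h-limit6-lem}.
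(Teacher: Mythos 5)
Your proposal is correct and follows essentially the same route as the paper: the paper's proof also starts from $h_2(s)\to 0$ as $s\to\infty$ (a consequence of the definition \eqref{h2-defn} and Corollary \ref{h1}) and then integrates the asymptotics $s^2h_{2,s}(s)\to \frac{(1-m)a_2(\eta,\widetilde\beta)}{2(n-2-nm)\widetilde\beta}$ from Lemma \ref{lim s2h2s} over $(s,\infty)$ to obtain \eqref{h2-expansion}. Your write-up is in fact slightly more careful than the paper's, which leaves the vanishing of $h_2$ at infinity and the extraction of the $o(1)$ factor from the integral implicit.
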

\begin{proof}
Since $h_2(s)\to 0$ as $s\to\infty$, by Lemma \ref{lim s2h2s},
\begin{equation*}
h_s(s)=\int_{\infty}^sh_{2,s}(z)\,dz=\left(\frac{(1-m)a_2(\eta, \widetilde \beta)}{2(n-2-nm)\widetilde \beta}+o(1)\right)\int_{\infty}^sz^{-2}\,dz
=\left(\frac{(1-m)a_2(\eta, \widetilde \beta)}{2(n-2-nm)\widetilde \beta}+o(1)\right)\frac{1}{s}
\end{equation*}
as $s\to\infty$ and the corollary follows.
\end{proof}

Let
\begin{equation}\label{h3s d}
h_3(s)= h_{2}(s)+\frac{(1-m) a_2(\eta, \widetilde \beta)}{2(n-2-nm) \widetilde \beta s},
\end{equation}
where $a_2(\eta, \widetilde \beta)$ is given by \eqref{a2-defn} with $K(\eta, \widetilde \beta)$ given by \eqref{h1-infty-limit}.

\begin{cor}\label{h3}
Let $n\ge3$, $0<m<\frac{n-2}{n}$, $m \neq \frac{n-2}{n+2}$ and $\tilde \alpha$, $\tilde \beta$ satisfy \eqref{alpha-beta-and-tilde-relation}. Then
\begin{equation}\label{h3s}
\lim_{s\to \infty} s^2 h_{3, s}(s) =0.
\end{equation}
\end{cor}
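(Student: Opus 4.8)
The plan is to reduce the claim to Lemma \ref{lim s2h2s} by differentiating the defining relation \eqref{h3s d} directly, exploiting the fact that the corrective term added in \eqref{h3s d} was chosen precisely to cancel the nonzero limit of $s^2 h_{2,s}(s)$. There is no differential inequality to set up here: the substantive analysis has already been carried out in Lemma \ref{lim s2h2s}, and the present corollary is the bookkeeping step that records that subtracting the leading $1/s$ term from $h_2$ annihilates the first-order part of its derivative.

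First I would differentiate \eqref{h3s d} in $s$, obtaining
\begin{equation*}
h_{3,s}(s)=h_{2,s}(s)-\frac{(1-m)a_2(\eta,\widetilde\beta)}{2(n-2-nm)\widetilde\beta}\cdot\frac{1}{s^2},
\end{equation*}
where $a_2(\eta,\widetilde\beta)$ is given by \eqref{a2-defn} with $K(\eta,\widetilde\beta)$ given by \eqref{h1-infty-limit}. Multiplying through by $s^2$ then yields the identity
\begin{equation*}
s^2 h_{3,s}(s)=s^2 h_{2,s}(s)-\frac{(1-m)a_2(\eta,\widetilde\beta)}{2(n-2-nm)\widetilde\beta}\quad\forall s>0.
\end{equation*}

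To finish, I would invoke Lemma \ref{lim s2h2s}, which states that $s^2 h_{2,s}(s)$ tends to exactly $\frac{(1-m)a_2(\eta,\widetilde\beta)}{2(n-2-nm)\widetilde\beta}$ as $s\to\infty$. This is precisely the constant being subtracted on the right-hand side of the identity above, so the two contributions cancel in the limit and \eqref{h3s} follows. I do not expect any genuine obstacle; the only point requiring care is to track the sign and constant produced by differentiating the $1/s$ factor, and to confirm that this matches the limiting value supplied by Lemma \ref{lim s2h2s} exactly rather than merely up to sign. Since Lemma \ref{lim s2h2s} itself rested on the differential relation \eqref{h2ss} for $h_2$ together with the l'Hospital rule, all the real work has already been done upstream.
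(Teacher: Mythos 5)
Your proposal is correct and coincides with the paper's own proof: the paper likewise differentiates \eqref{h3s d}, multiplies by $s^2$, and applies Lemma \ref{lim s2h2s} to cancel the constant $\frac{(1-m)a_2(\eta,\widetilde\beta)}{2(n-2-nm)\widetilde\beta}$ exactly. The sign bookkeeping you flag is handled the same way and matches.
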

\begin{proof}
By \eqref{h3s d} and Lemma \ref{lim s2h2s},
\begin{equation*}
\lim_{s\to\infty}s^2h_{3,s}(s)= \lim_{s\to\infty}s^2h_{2,s}(s)-\frac{(1-m) a_2(\eta, \widetilde \beta)}{2(n-2-nm)\widetilde{\beta}}=0
\end{equation*}
and the corollary follows.
\end{proof}

Since $\4{w}(s)=w(r)$, where $s=\log r$, by \eqref{alpha-beta-and-tilde-relation}, \eqref{w-defn}, \eqref{def h}, \eqref{def h1}, \eqref{h2-defn}, \eqref{h2-expansion}, Corollary \ref{h-limit6-lem}, we have the following the proposition.

\begin{prop}\label{g11-expansion-general}
Let $n\ge3$ and $0<m < \frac{n-2}{n}$. Let $a_1$ and $K_0$ be given by \eqref{a1-defn}  and \eqref{k0-defn} respectively. Let $g_{1,1}$ be the unique solution of \eqref{g-eqn} given by Theorem \ref{g-existence-thm}  with $\eta=1$, $\4{\beta}=1$ and $\4{\alpha}$ be given by \eqref{alpha-beta-and-tilde-relation}. Then the following holds.
\begin{equation}\label{class g}
\begin{aligned}
g_{1,1}(r)^{1-m}&=\frac{2(n-1)(n-2-nm)}{(1-m)r^\frac{n-2-nm}{m}}\left\{ \log r +\frac{(n-2-(n+2)m)}{2(n-2-nm)}\log (\log r)+K_0 \right.\\
&\left.+\frac{a_1}{\log r} +\frac{(n-2-(n+2)m)^2}{4(n-2-nm)^2}\cdot \frac{\log (\log r)}{\log r}+ o \left( \frac{1}{\log r} \right) \right \} \quad \text{ as } r\to \infty.
\end{aligned}
\end{equation}
\end{prop}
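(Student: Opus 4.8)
The plan is to unwind the telescoping chain of substitutions $\widetilde w\mapsto h\mapsto h_1\mapsto h_2$ (respectively $\widetilde w\mapsto h$ in the borderline case $m=\frac{n-2}{n+2}$) and read off the full asymptotic expansion of $\widetilde w(s)$ as $s\to\infty$, then translate it back to $g_{1,1}(r)^{1-m}$ through the definition \eqref{w-defn} of $w$. All of the analytic content has already been extracted in the preceding lemmas and corollaries; what remains is to reassemble the pieces and to verify that the constants produced coincide with $K_0$ and $a_1$ as defined in \eqref{k0-defn} and \eqref{a1-defn}. I would split the argument according to whether $m\neq\frac{n-2}{n+2}$ or $m=\frac{n-2}{n+2}$, since the intermediate functions $h_1,h_2,h_3$ are only defined in the former case.

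For $m\neq\frac{n-2}{n+2}$, I start from the expansion of $h_2$ in Corollary \ref{lim s2-tail-small-at-infty-lem}, namely $h_2(s)=-\frac{(1-m)a_2(\eta,\widetilde\beta)}{2(n-2-nm)\widetilde\beta}\,s^{-1}+o(s^{-1})$, and invert the defining relations \eqref{h2-defn}, \eqref{def h1}, \eqref{def h} in turn. Substituting back produces
\[
\widetilde w(s)=\frac{2(n-1)(n-2-nm)}{(1-m)\widetilde\beta}\,s+\frac{(n-1)(n-2-(n+2)m)}{(1-m)\widetilde\beta}\log s+K(\eta,\widetilde\beta)+\cdots
\]
with the explicit $\frac{1}{s}$ and $\frac{\log s}{s}$ terms coming from the $h_1$ and $h_2$ corrections. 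Specializing to $\eta=\widetilde\beta=1$ and factoring out $\frac{2(n-1)(n-2-nm)}{1-m}$, I would check that the constant term becomes exactly $K_0$ by \eqref{k0-defn}, that the $\frac{1}{s}$ coefficient collapses to $a_1$ by \eqref{a1-defn} together with the value of $a_2(1,1)$ from \eqref{a2-defn}, and that the $\frac{\log s}{s}$ coefficient is read off directly as $\frac{(n-2-(n+2)m)^2}{4(n-2-nm)^2}$.

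Finally, recalling $w(r)=r^{(1-m)\widetilde\alpha/\widetilde\beta}g(r)^{1-m}$ and $\widetilde w(s)=w(r)$ with $s=\log r$, together with the identity $(1-m)\widetilde\alpha/\widetilde\beta=\frac{n-2-nm}{m}$ (which follows from \eqref{alpha-beta-and-tilde-relation} with $\widetilde\beta=1$), I would divide by $r^{(n-2-nm)/m}$ and replace $s$ by $\log r$ to obtain \eqref{class g}. The case $m=\frac{n-2}{n+2}$ runs the same way but starts from Corollary \ref{h-limit6-lem} instead; here $n-2-(n+2)m=0$ annihilates the $\log s$, $\log(\log r)$, and $\frac{\log s}{s}$ terms, and I would verify that the surviving $\frac{1}{s}$ coefficient $-\frac{1-2m}{2(n-2-nm)}$ agrees with the value of $a_1$ in \eqref{a1-defn} after substituting the reduced form of $a_2(1,1)$, so that \eqref{class g} holds verbatim in both regimes. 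The only genuine obstacle is this constant-matching bookkeeping, in particular confirming that the $a_2(1,1)$ contribution combines with the $\frac{(n-2-(n+2)m)^2}{4(n-2-nm)^2}$ term to give precisely $a_1$; everything else is substitution.
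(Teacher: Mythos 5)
Your proposal is correct and follows essentially the same route as the paper: the paper's entire proof is a one-sentence citation of the relation $\4{w}(s)=w(r)$, $s=\log r$, together with \eqref{alpha-beta-and-tilde-relation}, \eqref{w-defn}, \eqref{def h}, \eqref{def h1}, \eqref{h2-defn}, \eqref{h2-expansion} and Corollary \ref{h-limit6-lem} — exactly the telescoping chain you unwind, including the split into $m\neq\frac{n-2}{n+2}$ and $m=\frac{n-2}{n+2}$. Your constant-matching checks (that the constant term is $K_0$, that the $1/s$ coefficient is $a_1$ via $a_2(1,1)$, and that $a_1$ reduces to $-\frac{1-2m}{2(n-2-nm)}$ in the borderline case) are precisely the bookkeeping the paper leaves implicit, and they do work out.
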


Recall that for any constants $\beta<0$ and $A>0$, we let $\alpha$ be as given by \eqref{ab} and $f_{\beta, A}$ be the unique solution of \eqref{ode} which satisfies \eqref{gro 0}, \eqref{gro inf} given by Theorem \ref{f-existence-uniqueness-thm}. 

\begin{lem}\label{f-transform-formula-lem}
Let $n\ge3$, $0<m<\frac{n-2}{n}$, $A_1>0$, $A_2>0$,  $\beta_1<0$, $\beta_2<0$, and  $\gamma_1$ be given by  \eqref{gamma1-defn}. Then
\begin{equation}\label{fA12}
f_{\beta_1, A_1}(r) =\left(A_2/A_1\right)^{\frac{2}{(1-m)\gamma_1}} f_{\beta_1, A_2}\left ((A_2/A_1)^{\frac{1}{\gamma_1}}r \right)\quad\forall r>0
\end{equation}
and 
\begin{equation}\label{fbe12}
f_{\beta_1, (\beta_2/\beta_1)^{\frac{1}{1-m}} A_1}(r) =(\beta_2/\beta_1)^{\frac{1}{1-m}}f_{\beta_2, A_1}(r)\quad\forall r>0.
\end{equation}
\end{lem}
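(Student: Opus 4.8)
The plan is to derive both identities from the uniqueness part of Theorem \ref{f-existence-uniqueness-thm}: for each fixed $\beta<0$ and each prescribed constant $A>0$ there is exactly one solution of \eqref{ode} obeying \eqref{gro inf}. Thus it suffices, for each identity, to check that the function on the right-hand side (i) is positive, (ii) solves \eqref{ode} with the value of $\beta$ appearing on the left, and (iii) satisfies \eqref{gro inf} with the value of $A$ appearing on the left; the two sides then coincide.

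The key computation is the behaviour of the operator in \eqref{ode} under the two-parameter rescaling $g(r):=c\,f(kr)$ with $c,k>0$. A direct differentiation, using $\alpha=2\beta/(1-m)$ from \eqref{ab}, shows that if $f$ solves \eqref{ode} with parameter $\beta_{\mathrm{old}}$, then
\begin{equation*}
\frac{n-1}{m}\Big((g^m)_{rr}+\frac{n-1}{r}(g^m)_r\Big)+\frac{2\beta_{\mathrm{new}}}{1-m}g+\beta_{\mathrm{new}}rg_r
=\big(c\,\beta_{\mathrm{new}}-c^m k^2\beta_{\mathrm{old}}\big)\Big(\tfrac{2}{1-m}f(kr)+kr\,f_r(kr)\Big),
\end{equation*}
so $g$ solves \eqref{ode} with parameter $\beta_{\mathrm{new}}$ precisely when $c^{1-m}\beta_{\mathrm{new}}=k^2\beta_{\mathrm{old}}$. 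Moreover, if $f$ satisfies \eqref{gro inf} with constant $A_{\mathrm{old}}$, then $\lim_{r\to\infty}r^{(n-2)/m}g(r)=c\,k^{-(n-2)/m}A_{\mathrm{old}}$.

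For \eqref{fA12} I take $\beta_{\mathrm{old}}=\beta_{\mathrm{new}}=\beta_1$, $c=(A_2/A_1)^{2/((1-m)\gamma_1)}$ and $k=(A_2/A_1)^{1/\gamma_1}$; then $c^{1-m}=k^2$ holds, so the rescaled function solves \eqref{ode} with $\beta=\beta_1$, and the limiting constant equals $c\,k^{-(n-2)/m}A_2=A_1$, where the cancellation rests on the identity $\tfrac{2}{1-m}-\tfrac{n-2}{m}=-\gamma_1$ from \eqref{gamma1-defn}. By uniqueness the rescaled function is $f_{\beta_1,A_1}$, which is \eqref{fA12}. For \eqref{fbe12} I instead keep $k=1$ and set $c=(\beta_2/\beta_1)^{1/(1-m)}$ (legitimate since $\beta_2/\beta_1>0$), with $\beta_{\mathrm{old}}=\beta_2$ and $\beta_{\mathrm{new}}=\beta_1$; the relation $c^{1-m}\beta_1=\beta_2$ is exactly the defining choice of $c$, so $c\,f_{\beta_2,A_1}$ solves \eqref{ode} with $\beta=\beta_1$ and satisfies \eqref{gro inf} with constant $c\,A_1=(\beta_2/\beta_1)^{1/(1-m)}A_1$; uniqueness then gives \eqref{fbe12}.

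There is no serious obstacle here: everything reduces to the scaling computation above together with the uniqueness statement. The only point demanding care is the exponent bookkeeping in verifying the decay constant for \eqref{fA12}, where the amplitude $c$ and the spatial dilation $k$ must be linked so that the ODE and the prescribed tail in \eqref{gro inf} are matched simultaneously; this is precisely what forces the exponents $2/((1-m)\gamma_1)$ and $1/\gamma_1$ and is guaranteed by the definition of $\gamma_1$. I note that \eqref{fA12} may alternatively be read off directly from the scaling definition \eqref{f-lambda-defn} together with the identification $f_\lambda=f_{\beta,\lambda^{-\gamma_1}}$ in Remark \ref{flambda12a}.
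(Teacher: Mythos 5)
Your proposal is correct and follows essentially the same route as the paper: rescale the known solution, check it solves \eqref{ode} with the target $\beta$ and has the target limit in \eqref{gro inf}, and invoke the uniqueness assertion of Theorem \ref{f-existence-uniqueness-thm}. The only difference is cosmetic — you make explicit the general scaling identity $g(r)=c\,f(kr)$ with the compatibility condition $c^{1-m}\beta_{\mathrm{new}}=k^{2}\beta_{\mathrm{old}}$, which the paper's proof asserts without computation when it declares that $F_1$ and $F_2$ satisfy \eqref{ode} with $\beta=\beta_1$.
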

\begin{proof}
Let 
\begin{equation*}
F_1(r)=\mu^{\frac{2}{1-m}}f_{\beta_1, A_2}(\mu r)\quad\forall\mu>0, r>0
\end{equation*}
and
\begin{equation*}
F_2(r)=(\beta_2/\beta_1)^{\frac{1}{1-m}}f_{\beta_2, A_1}(r)\quad\forall r>0,
\end{equation*}
where $\mu =(A_2/A_1)^{\frac{1}{\gamma_1}}$. Then both $F_1$ and $F_2$ satisfies \eqref{ode} with $\alpha=\frac{2\beta_1}{1-m}$, $\beta=\beta_1$. Moreover
\begin{equation*}
\lim_{r\to\infty}r^{\frac{n-2}{m}}F_1(r)=\mu^{\frac{2}{1-m}-\frac{n-2}{m}}\lim_{r\to\infty}(\mu r)^{\frac{n-2}{m}}f_{\beta_1, A_2}(\mu r)=A_2 \mu^{\frac{2}{1-m}-\frac{n-2}{m}}=A_1
\end{equation*}
and
$$
\lim_{r\to\infty}r^{\frac{n-2}{m}}F_2(r)=(\beta_2/\beta_1)^{\frac{1}{1-m}}\lim_{r\to\infty}r^{\frac{n-2}{m}} f_{\beta_2, A_1}(r)=(\beta_2/\beta_1)^{\frac{1}{1-m}}A_1.
$$
Hence by Theorem \eqref{f-existence-uniqueness-thm}, 
\begin{equation*}
F_1(r) = f_{\beta_1, A_1}(r)\quad\forall r>0
\end{equation*}
and
\begin{equation*}
F_2(r)= f_{\beta_1, (\beta_2/\beta_1)^{\frac{1}{1-m}}A_1}(r)\quad\forall r>0.
\end{equation*}
Thus \eqref{fA12} and \eqref{fbe12} follows.
\end{proof}

\begin{lem}\label{g-transform-formula-lem}
Let $n\ge3$ and $0<m<\frac{n-2}{n}$, and $\eta_1>0, \eta_2>0$, $\4{\beta}_1>0$, $\4{\beta}_2>0$. Then 
\begin{equation}\label{fA12a}
g_{\widetilde \beta_1, \eta_1}(r) = (\eta_1/ \eta_2) g_{\widetilde \beta_1, \eta_2}\left((\eta_1/ \eta_2)^{\frac{m(1-m)} {n-2-nm}} r\right)\quad\forall r\ge 0
\end{equation}
and
\begin{equation}\label{fbe12a}
g_{\4{\beta}_1, (\4{\beta}_2/\4{\beta}_1)^{\frac{1}{1-m}} \eta_1}(r) =(\widetilde \beta_2/ \widetilde \beta_1)^{\frac{1}{1-m}}g_{\widetilde \beta_2, \eta_1}(r)\quad\forall r\ge 0.
\end{equation}
\end{lem}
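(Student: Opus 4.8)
The plan is to mirror the proof of Lemma \ref{f-transform-formula-lem}: for each identity I construct an explicit candidate by rescaling a solution of \eqref{g-eqn} furnished by Theorem \ref{g-existence-thm}, verify by direct substitution that the candidate solves \eqref{g-eqn} with the target parameter $\widetilde\beta$ (and the corresponding $\widetilde\alpha=\gamma_1\widetilde\beta$), check that it takes the prescribed value at $r=0$, and then invoke the uniqueness part of Theorem \ref{g-existence-thm}. The structural observation that makes everything work is recorded in \eqref{alpha-beta-and-tilde-relation}: the ratio $\widetilde\alpha/\widetilde\beta=\gamma_1$ depends only on $m$ and $n$, so the lower order combination in \eqref{g-eqn} factors as $\widetilde\alpha g+\widetilde\beta rg_r=\widetilde\beta(\gamma_1 g+rg_r)$, isolating all of the $\widetilde\beta$-dependence.

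For \eqref{fA12a} I would fix $\widetilde\beta=\widetilde\beta_1$ and set
$$G_1(r)=c\,g_{\widetilde\beta_1,\eta_2}(\mu r),\qquad c=\eta_1/\eta_2,\qquad \mu=(\eta_1/\eta_2)^{\frac{m(1-m)}{n-2-nm}}.$$
Substituting $G_1$ into \eqref{g-eqn}, the diffusion term $\frac{n-1}{m}\bigl((G_1^m)_{rr}+\frac{n-1}{r}(G_1^m)_r\bigr)$ scales, relative to the value of the diffusion term for $g_{\widetilde\beta_1,\eta_2}$ at $\rho=\mu r$, by the factor $c^m\mu^2$; the lower order term scales by $c\,\mu^{-\left(\frac{n-2}{m}-n-2\right)}$, the extra power coming from the inhomogeneous weight $r^{\frac{n-2}{m}-n-2}$. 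Using the equation for $g_{\widetilde\beta_1,\eta_2}$ at $\rho$, the two contributions cancel exactly when $c^m\mu^2=c\,\mu^{-\left(\frac{n-2}{m}-n-2\right)}$, i.e. $c^{1-m}=\mu^{\frac{n-2-nm}{m}}$, which is precisely the choice of $\mu$ above. Since $G_1(0)=c\,\eta_2=\eta_1$, uniqueness in Theorem \ref{g-existence-thm} forces $G_1=g_{\widetilde\beta_1,\eta_1}$, giving \eqref{fA12a}.

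For \eqref{fbe12a} I would not rescale the spatial variable at all, setting $G_2(r)=d\,g_{\widetilde\beta_2,\eta_1}(r)$ with $d=(\widetilde\beta_2/\widetilde\beta_1)^{\frac{1}{1-m}}$. Substituting into \eqref{g-eqn} with target parameter $\widetilde\beta_1$, the diffusion term scales by $d^m$, while the lower order term becomes
$$r^{\frac{n-2}{m}-n-2}\,\widetilde\beta_1\bigl(\gamma_1 G_2+rG_{2,r}\bigr)=d\,\frac{\widetilde\beta_1}{\widetilde\beta_2}\,r^{\frac{n-2}{m}-n-2}\,\widetilde\beta_2\bigl(\gamma_1 g+rg_r\bigr),$$
where $g=g_{\widetilde\beta_2,\eta_1}$ and I have used $\widetilde\alpha_i=\gamma_1\widetilde\beta_i$. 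The equation for $g_{\widetilde\beta_2,\eta_1}$ then collapses the whole expression to $\bigl(-d^m+d\,\widetilde\beta_1/\widetilde\beta_2\bigr)$ times $r^{\frac{n-2}{m}-n-2}\widetilde\beta_2(\gamma_1 g+rg_r)$, a factor which is strictly positive by \eqref{g-monotone-expression}. This vanishes exactly when $d^{1-m}=\widetilde\beta_2/\widetilde\beta_1$, the stated value of $d$, so $G_2$ solves \eqref{g-eqn} with $\widetilde\beta=\widetilde\beta_1$. As $G_2(0)=d\,\eta_1=(\widetilde\beta_2/\widetilde\beta_1)^{\frac{1}{1-m}}\eta_1$, uniqueness yields $G_2=g_{\widetilde\beta_1,(\widetilde\beta_2/\widetilde\beta_1)^{\frac{1}{1-m}}\eta_1}$, which is \eqref{fbe12a}.

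The only genuine bookkeeping is the exponent in \eqref{fA12a}: it is forced by balancing the diffusion scaling $c^m\mu^2$ against the scaling $c\,\mu^{-\left(\frac{n-2}{m}-n-2\right)}$ produced by the weight, using the simplification $-\left(\frac{n-2}{m}-n-2\right)-2=-\frac{n-2-nm}{m}$. I would also note explicitly that, being a rescaling of a solution produced by Theorem \ref{g-existence-thm}, each candidate automatically inherits the $C^1$ (case (a)) or $C^{0,\delta_0}$ regularity and the prescribed behaviour of $g_r$ near $r=0$ (case (b)) required there, so the uniqueness statement applies verbatim and no separate regularity check is needed.
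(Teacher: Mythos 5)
Your proof is correct and takes essentially the same route as the paper's: the paper defines exactly the same candidates $G_1(r)=(\eta_1/\eta_2)\,g_{\widetilde\beta_1,\eta_2}\left((\eta_1/\eta_2)^{\frac{m(1-m)}{n-2-nm}}r\right)$ and $G_2(r)=(\widetilde\beta_2/\widetilde\beta_1)^{\frac{1}{1-m}}g_{\widetilde\beta_2,\eta_1}(r)$, asserts that they solve \eqref{g-eqn} with parameters $\widetilde\alpha_1$, $\widetilde\beta_1$ and the appropriate values at $r=0$, and concludes by the uniqueness in Theorem \ref{g-existence-thm}. Your write-up simply makes explicit the scaling verification and the case-(b) behaviour of $g_r$ near $r=0$ that the paper's proof leaves implicit.
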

\begin{proof}
Let $\4{\alpha}_1$ be given by \eqref{alpha-beta-and-tilde-relation} with $\4{\alpha}=\4{\alpha}_1$ and $\4{\beta}=\4{\beta}_1$.
Let
\begin{equation*}
G_1(r):=(\eta_1/ \eta_2) g_{\widetilde \beta_1, \eta_2} \left((\eta_1/ \eta_2)^{\frac{m(1-m)} {n-2-nm}} r\right)\quad\forall r\ge 0
\end{equation*}
and 
\begin{equation*}
G_2(r):=(\widetilde \beta_2/ \widetilde \beta_1)^{\frac{1}{1-m}}g_{\widetilde \beta_2, \eta_1}(r)
\quad\forall r\ge 0.
\end{equation*}
Then $G_1$  satisfies \eqref{g-eqn} with $\eta=\eta_1$ and  $\4{\alpha}=\4{\alpha}_1$, $\4{\beta}=\4{\beta}_1$, and $G_2$  satisfies \eqref{g-eqn} with $\eta=(\widetilde \beta_2/ \widetilde \beta_1)^{\frac{1}{1-m}}\eta_1$ and  $\4{\alpha}=\4{\alpha}_1$, $\4{\beta}=\4{\beta}_1$. Hence by Theorem \ref{g-existence-thm}, 
\begin{equation*}
G_1(r) = g_{\widetilde \beta_1, \eta_1}(r)\quad\forall r\ge 0
\end{equation*}
and
\begin{equation*}
G_2(r)= g_{\4{\beta}_1, (\4{\beta}_2/\4{\beta}_1)^{\frac{1}{1-m}} \eta_1}(r)\quad\forall r\ge 0.
\end{equation*}
Thus \eqref{fA12a} and \eqref{fbe12a} follows.
\end{proof}

We are now ready for the proof of Theorem \ref{Thm HAB}.

\begin{proof}[Proof of Theorem \ref{Thm HAB}]
By Proposition \ref{g11-expansion-general},

\begin{align}\label{f-1-1-expand}
f_{-1,1}^{1-m}(r)&=r^{-\frac{n-2}{m}(1-m)} g^{1-m}_{1,1}(r^{-1})\notag\\
&=\frac{2(n-1)(n-2-nm)}{(1-m)r^2}\left\{\log r^{-1} +\frac{(n-2-(n+2)m)}{2(n-2-nm)}\log (\log r^{-1})+K_0 \right.\notag\\
&\left. +\frac{a_1}{\log r^{-1}}+\frac{(n-2-(n+2)m)^2}{4(n-2-nm)^2}\cdot \frac{\log (\log r^{-1})}{\log r^{-1}}+ o\left(\frac{1}{\log r^{-1}}\right) \right \} \quad \text{as } r\to 0.
\end{align}
Hence by \eqref{f-1-1-expand} and Lemma \ref{f-transform-formula-lem}, 

\begin{align}\label{f-lambda-expansion5}
&f_{\beta, A}(r)^{1-m}\notag\\
=&\left( A (-\beta)^{\frac{1}{1-m}} \right)^{-\frac{2}{\gamma_1}} 
f_{\beta, (-\beta)^{-\frac{1}{1-m}}}^{1-m}\left( \left( A (-\beta)^{\frac{1}{1-m}} \right)^{-\frac{1}{\gamma_1}} r \right)\notag\\
=&\left( A (-\beta)^{\frac{1}{1-m}} \right)^{-\frac{2}{\gamma_1}} (-\beta)^{-1} f_{-1, 1}^{1-m}\left( \left( A (-\beta)^{\frac{1}{1-m}} \right)^{-\frac{1}{\gamma_1}} r \right)\notag\\
=&\frac{2(n-1)(n-2-nm)}{(1-m)|\beta| r^2}\left\{\log r^{-1} +\frac{(n-2-(n+2)m)}{2(n-2-nm)}\log \left(\log \left( \left( A (-\beta)^{\frac{1}{1-m}} \right)^{\frac{1}{\gamma_1}} r^{-1} \right)\right)\right.\notag\\
&\qquad \left. +K_0+\frac{1}{\gamma_1} \log A +\frac{1}{\gamma_1 (1-m)} \log|\beta| +\frac{a_1}{\log r^{-1}} \right.\notag\\
&\qquad \left. +\frac{(n-2-(n+2)m)^2}{4(n-2-nm)^2}\cdot \frac{\log \left(\log \left( \left( A (-\beta)^{\frac{1}{1-m}} \right)^{\frac{1}{\gamma_1}} r^{-1}  \right)\right)}{\log \left( \left( A (-\beta)^{\frac{1}{1-m}} \right)^{\frac{1}{\gamma_1}} r^{-1} \right)}+ o((\log r^{-1})^{-1}) \right \}\,\,\mbox{ as }r\to 0.
\end{align}
Now
\begin{align}\label{log-log-expansion}
\log \left(\log \left( \left( A (-\beta)^{\frac{1}{1-m}} \right)^{\frac{1}{\gamma_1}} r^{-1} \right)\right)=&\log (\log r^{-1})+\log\left(1+\frac{\log (A (-\beta)^{\frac{1}{1-m}})}{\gamma_1\log r^{-1}}\right)\notag\\
=&\log (\log r^{-1})+\frac{\log (A (-\beta)^{\frac{1}{1-m}})}{\gamma_1\log r^{-1}}+o((\log r^{-1})^{-1})\quad\mbox{ as }r\to 0.
\end{align}
Hence by \eqref{f-lambda-expansion5} and \eqref{log-log-expansion} we get \eqref{f-lambda-A-expansion}. Putting $A=\lambda^{-\gamma_1}$ in \eqref{f-lambda-A-expansion} we get \eqref{f-lambda-expansion} and the theorem follows. 
\end{proof}

\begin{cor}\label{dif f}
Let $n\ge 3$, $0<m<\frac{n-2}{n}$ and $\lambda_1>\lambda_2>0$. Then there exist  constants $0<\delta_2<1$ and $c_4>c_3>0$ such that
\begin{equation}\label{self-similar-solns-difference}
c_3r^{-\frac{2}{1-m}}(\log r^{-1})^{\frac{m}{1-m}} \le f_{\lambda_2}(r)-f_{\lambda_1}(r)\le c_4r^{-\frac{2}{1-m}}(\log r^{-1})^{\frac{m}{1-m}} \quad\forall 0<r\le \delta_2.
\end{equation}
\end{cor}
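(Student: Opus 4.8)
The plan is to read the difference directly off the higher-order expansion in Theorem \ref{Thm HAB} and then transfer the estimate from the difference of $(1-m)$-th powers to the difference of the functions themselves via the mean value theorem. Writing $C:=\frac{2(n-1)(n-2-nm)}{(1-m)|\beta|}$, I observe that in the expansion \eqref{f-lambda-expansion} for $f_\lambda^{1-m}$ every term inside the braces is independent of $\lambda$ except the summand $-\log\lambda$ (of order $O(1)$) and the summand $a_3(\lambda^{-\gamma_1},\4{\beta})/\log r^{-1}$ (of order $O(1/\log r^{-1})$). Subtracting the expansions for $f_{\lambda_1}^{1-m}$ and $f_{\lambda_2}^{1-m}$ makes all the $\log r^{-1}$, $\log(\log r^{-1})$, $K_0$ and $\log|\beta|$ contributions cancel, leaving
\begin{equation*}
f_{\lambda_2}^{1-m}(r)-f_{\lambda_1}^{1-m}(r)=\frac{C}{r^2}\left\{\log\frac{\lambda_1}{\lambda_2}+\frac{a_3(\lambda_2^{-\gamma_1},\4{\beta})-a_3(\lambda_1^{-\gamma_1},\4{\beta})}{\log r^{-1}}+o\!\left(\frac{1}{\log r^{-1}}\right)\right\}\quad\text{as }r\to0.
\end{equation*}
Since $\lambda_1>\lambda_2>0$ forces $\log(\lambda_1/\lambda_2)>0$ and $\log r^{-1}\to\infty$, the braces tend to the strictly positive constant $\log(\lambda_1/\lambda_2)$. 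Hence there exist constants $0<\delta<1$ and $c_6>c_5>0$ with
\begin{equation*}
\frac{c_5}{r^2}\le f_{\lambda_2}^{1-m}(r)-f_{\lambda_1}^{1-m}(r)\le\frac{c_6}{r^2}\quad\forall\,0<r\le\delta.
\end{equation*}

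Next I would convert this into an estimate for $f_{\lambda_2}-f_{\lambda_1}$. By Remark \ref{flambda12} one has $f_{\lambda_2}(r)>f_{\lambda_1}(r)>0$, so applying the mean value theorem to the increasing concave function $t\mapsto t^{1-m}$ on the interval $[f_{\lambda_1}(r),f_{\lambda_2}(r)]$ yields a point $\xi=\xi(r)\in(f_{\lambda_1}(r),f_{\lambda_2}(r))$ with
\begin{equation*}
f_{\lambda_2}(r)-f_{\lambda_1}(r)=\frac{\xi^{m}}{1-m}\left(f_{\lambda_2}^{1-m}(r)-f_{\lambda_1}^{1-m}(r)\right).
\end{equation*}
The two-sided bounds in Remark \ref{flambda12a} force both $f_{\lambda_1}(r)$ and $f_{\lambda_2}(r)$, and hence the intermediate value $\xi(r)$, to be comparable to $(\log r^{-1}/r^2)^{1/(1-m)}$ for small $r$; therefore $\xi(r)^{m}$ is comparable to $r^{-2m/(1-m)}(\log r^{-1})^{m/(1-m)}$. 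Multiplying this against the $r^{-2}$ estimate above and using the identity $-\frac{2m}{1-m}-2=-\frac{2}{1-m}$ produces precisely the two-sided bound \eqref{self-similar-solns-difference}, with suitable $c_4>c_3>0$ and $\delta_2=\min(\delta,r_2(\lambda_1),r_2(\lambda_2))$.

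I do not expect a genuine obstacle, since Theorem \ref{Thm HAB} does the heavy lifting; the only point needing care is that the subtraction leaves the \emph{strictly positive} leading coefficient $C\log(\lambda_1/\lambda_2)$, which is exactly what supplies the lower bound in \eqref{self-similar-solns-difference} (the upper bound alone would follow from cruder comparison). One should also note that the remainders $o(1/\log r^{-1})$ in the two expansions are fixed once $\lambda_1,\lambda_2$ are fixed, so their difference remains $o(1/\log r^{-1})$ and is dominated by the constant term as $r\to0$; this is immediate from applying Theorem \ref{Thm HAB} separately to each $\lambda_i$.
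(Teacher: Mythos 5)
Your proposal is correct and follows essentially the same route as the paper: both proofs read off from Theorem \ref{Thm HAB} that $f_{\lambda_2}^{1-m}(r)-f_{\lambda_1}^{1-m}(r)$ has leading behaviour $\frac{2(n-1)(n-2-nm)}{(1-m)|\beta|r^2}\log(\lambda_1/\lambda_2)$ (the $\lambda$-dependence entering only through $-\log\lambda$ and the $O(1/\log r^{-1})$ terms), and then convert to $f_{\lambda_2}-f_{\lambda_1}$ via the mean value theorem, where the intermediate point contributes a factor comparable to $\bigl(\log r^{-1}/r^{2}\bigr)^{m/(1-m)}$. The only cosmetic difference is that you apply the mean value theorem to $t\mapsto t^{1-m}$ on $[f_{\lambda_1}(r),f_{\lambda_2}(r)]$ and bound the intermediate point by Remark \ref{flambda12a}, whereas the paper applies it to $t\mapsto t^{1/(1-m)}$ between the powers $f_{\lambda_i}^{1-m}(r)$ and bounds the intermediate point by the expansion itself; the resulting estimates are identical.
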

\begin{proof}
By the mean value theorem for any $r>0$ there exists a  constant $\xi_1(r)$ between $f_{\lambda_1}^{1-m}(r)$ and $f_{\lambda_2}^{1-m}(r)$ such that
\begin{equation}\label{f-lambda-difference}
f_{\lambda_2}(r)-f_{\lambda_1}(r)=(f_{\lambda_2}^{1-m}(r))^{\frac{1}{1-m}}-(f_{\lambda_1}^{1-m}(r))^{\frac{1}{1-m}}=\frac{1}{1-m}(f_{\lambda_2}^{1-m}(r)-f_{\lambda_1}^{1-m}(r))\xi_1(r)^{\frac{m}{1-m}}.
\end{equation}
By \eqref{f-lambda-expansion} and \eqref{f-lambda-difference},
\begin{equation}\label{f-difference11}
f_{\lambda_2}(r)-f_{\lambda_1}(r)\approx\frac{2(n-1)(n-2-nm)}{(1-m)^2|\beta| r^2}\cdot\left(\frac{2(n-1)(n-2-nm)}{(1-m)|\beta|}\left(\frac{\log r^{-1}}{r^2}\right) \right)^{\frac{m}{1-m}}\log (\lambda_1/\lambda_2)
\end{equation}
as $r\to 0$.
Hence by \eqref{f-difference11}  there exist  constants $0<\delta_2<1$ and $c_4>c_3>0$  such that \eqref{self-similar-solns-difference} holds and the lemma follows.
\end{proof}

Since $g_{\widetilde \beta, \eta}(r)=r^{-\frac{n-2}{m}}f_{\beta,\eta}(r^{-1})$, where $f_{\beta,\eta}$ is the unique solution of \eqref{ode} which satisfies  \eqref{gro inf} with $A=\eta$, by Theorem \ref{Thm HAB} we have the following result.

\begin{cor}\label{g-expansion20}
Let $n\ge3$, $0<m<\frac{n-2}{n}$, $\eta>0$ and $\widetilde \alpha$, $\widetilde \beta$ be given by \eqref{alpha-beta-and-tilde-relation}. Let $K_0$ be given by \eqref{k0-defn} and $a_3(\eta,\4{\beta})$ be given by \eqref{a3-defn} with $A=\eta$. Then the following holds.

\begin{align*}
g_{\widetilde \beta, \eta}^{1-m}(r)&=\frac{2(n-1)(n-2-nm)}{(1-m)\widetilde \beta r^\frac{n-2-nm}{m}}\left\{ \log r +\frac{(n-2-(n+2)m)}{2(n-2-nm)}\log (\log r) \right. \notag\\
&+K_0+\frac{1} {\gamma_1} \log \eta+ \frac{m} {n-2-nm} \log \widetilde \beta\notag \\
&\left. +\frac{a_3(\eta,\4{\beta})}{\log r} +\frac{(n-2-(n+2)m)^2}{4(n-2-nm)^2}\cdot \frac{\log (\log r)}{\log r}+ o\left(\frac{1}{\log r}\right) \right \} \quad \text{ as } r\to \infty.
\end{align*}
\end{cor}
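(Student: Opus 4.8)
The plan is to read off the expansion for $g_{\widetilde\beta,\eta}^{1-m}$ at infinity directly from the expansion for $f_{\beta,\eta}^{1-m}$ at the origin, exactly as the sentence preceding the statement suggests. Here $f_{\beta,\eta}$ denotes the unique solution of \eqref{ode} satisfying \eqref{gro inf} with $A=\eta$, and the inversion relation $g_{\widetilde\beta,\eta}(r)=r^{-\frac{n-2}{m}}f_{\beta,\eta}(r^{-1})$ holds. Raising this to the power $1-m$ gives
\begin{equation*}
g_{\widetilde\beta,\eta}^{1-m}(r)=r^{-\frac{(n-2)(1-m)}{m}}\,f_{\beta,\eta}^{1-m}(r^{-1})\quad\forall r>0,
\end{equation*}
so that understanding $g_{\widetilde\beta,\eta}^{1-m}(r)$ as $r\to\infty$ is equivalent to understanding $f_{\beta,\eta}^{1-m}(\rho)$ as $\rho=r^{-1}\to 0$. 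Thus the whole corollary reduces to substituting into \eqref{f-lambda-A-expansion} of Theorem \ref{Thm HAB} with $A=\eta$.

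First I would perform the substitution $\rho=r^{-1}$ in \eqref{f-lambda-A-expansion}. Since \eqref{f-lambda-A-expansion} is already written in terms of $\log\rho^{-1}$ and $\log(\log\rho^{-1})$, this turns every $\log\rho^{-1}$ into $\log r$, every $\log(\log\rho^{-1})$ into $\log(\log r)$, every $\frac{1}{\log\rho^{-1}}$ into $\frac{1}{\log r}$, and the $\rho^{-2}$ prefactor into $r^{2}$, with no further expansion needed. Combining this $r^{2}$ with the external factor $r^{-\frac{(n-2)(1-m)}{m}}$ collapses the power of $r$ to
\begin{equation*}
2-\frac{(n-2)(1-m)}{m}=\frac{2m-(n-2)+(n-2)m}{m}=-\frac{n-2-nm}{m},
\end{equation*}
which is precisely the exponent in the claimed formula.

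Finally I would fix the constants. By \eqref{tilde ab} we have $\widetilde\beta=-\beta$, and since $\beta<0$ this gives $|\beta|=\widetilde\beta$; hence the leading constant $\frac{2(n-1)(n-2-nm)}{(1-m)|\beta|}$ becomes $\frac{2(n-1)(n-2-nm)}{(1-m)\widetilde\beta}$ and the lower-order term $\frac{m}{n-2-nm}\log|\beta|$ becomes $\frac{m}{n-2-nm}\log\widetilde\beta$. Reading off the surviving terms after the substitution — the constant $K_0$, the term $\frac{1}{\gamma_1}\log\eta$ coming from $A=\eta$, the term $\frac{a_3(\eta,\widetilde\beta)}{\log r}$ (with $a_3(\eta,\widetilde\beta)$ given by \eqref{a3-defn} at $A=\eta$), the $\frac{(n-2-(n+2)m)^2}{4(n-2-nm)^2}\cdot\frac{\log(\log r)}{\log r}$ term, and the error $o\big((\log r)^{-1}\big)$ — reproduces exactly the bracketed expression in the statement. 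There is no genuine obstacle here: this is a transcription through the inversion identity, and the only points demanding care are the exponent bookkeeping displayed above and the identification $|\beta|=\widetilde\beta$. The exceptional case $m=\frac{n-2}{n+2}$, in which $K(\eta,\widetilde\beta)$ is defined via \eqref{h-infty-limit} rather than \eqref{h1-infty-limit}, requires no separate treatment because Theorem \ref{Thm HAB} already handles both cases uniformly inside the single quantity $a_3(\eta,\widetilde\beta)$.
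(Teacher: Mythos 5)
Your proposal is correct and follows exactly the paper's own route: the paper derives Corollary \ref{g-expansion20} in one line by applying the inversion identity $g_{\widetilde\beta,\eta}(r)=r^{-\frac{n-2}{m}}f_{\beta,\eta}(r^{-1})$ to the expansion \eqref{f-lambda-A-expansion} of Theorem \ref{Thm HAB} with $A=\eta$, which is precisely your substitution $\rho=r^{-1}$ together with the exponent computation $2-\frac{(n-2)(1-m)}{m}=-\frac{n-2-nm}{m}$ and the identification $|\beta|=\widetilde\beta$. Your write-up simply makes explicit the bookkeeping the paper leaves implicit.
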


\begin{remark}
One can also prove Corollary \ref{g-expansion20} by using Proposition \ref{g11-expansion-general}, Lemma \ref{g-transform-formula-lem} and an argument similar to the proof of 
Theorem \ref{Thm HAB}.
\end{remark}

By Theorem \ref{w-limit-thm} and Corollary \ref{g-expansion20} and an argument similar to the proof of Corollary \ref{dif f}, we have the following result.

\begin{cor}\label{g-lambda-12-difference-cor}
Let $n\ge 3$, $0<m<\frac{n-2}{n}$, $\lambda>0$, $\lambda_1>\lambda_2>0$, and $\alpha$, $\beta$, $\widetilde \alpha$, $\widetilde \beta$ be given by \eqref{ab} and \eqref{tilde ab} respectively. Let $g_{\lambda_i}$, $i=1,2$, be as given by \eqref{def g lam} with $\lambda=\lambda_1,\lambda_2$. Then there exist  constants $R_1>1$, $R_2(\lambda)>0$ and $c_6>c_5>0$ such that
\begin{equation*}
c_5r^{\frac{2}{1-m}-\frac{n-2}{m}}(\log r)^{\frac{m}{1-m}} \le g_{\lambda_2}(r)-g_{\lambda_1}(r)\le c_6r^{\frac{2}{1-m}-\frac{n-2}{m}}(\log r)^{\frac{m}{1-m}} \quad\forall r\ge R_1.
\end{equation*}
and
\begin{equation*}
c_5|x|^{-\frac{n-2-nm}{m}}\log |x|\le g_{\lambda}^{1-m}(x)\le c_6|x|^{-\frac{n-2-nm}{m}}\log |x|\quad\forall |x|\ge R_2(\lambda).
\end{equation*}
\end{cor}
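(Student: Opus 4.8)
The plan is to mirror the proof of Corollary \ref{dif f}, but working at infinity instead of at the origin and using Corollary \ref{g-expansion20} in place of Theorem \ref{Thm HAB}. First I would record that by \eqref{def g lam} and the identification preceding Corollary \ref{g-expansion20} one has $g_{\lambda}=g_{\widetilde\beta,\lambda^{-\gamma_1}}$; thus $g_{\lambda_1}$ and $g_{\lambda_2}$ solve the same equation \eqref{g-eqn} (with the same $\widetilde\beta$) and differ only through the parameter $\eta_i=\lambda_i^{-\gamma_1}$. Since $\gamma_1>0$ and $\lambda_1>\lambda_2$, we have $\eta_2>\eta_1>0$, and in particular $g_{\lambda_2}>g_{\lambda_1}$, so the difference is positive.

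For the first double inequality I would apply the mean value theorem exactly as in \eqref{f-lambda-difference}: for each $r>0$ there is $\xi(r)$ between $g_{\lambda_1}^{1-m}(r)$ and $g_{\lambda_2}^{1-m}(r)$ with
\[
g_{\lambda_2}(r)-g_{\lambda_1}(r)=\frac{1}{1-m}\big(g_{\lambda_2}^{1-m}(r)-g_{\lambda_1}^{1-m}(r)\big)\,\xi(r)^{\frac{m}{1-m}}.
\]
Feeding in Corollary \ref{g-expansion20}, the key observation is that in the bracketed expansion of $g_{\widetilde\beta,\eta}^{1-m}$ every term except $\frac{1}{\gamma_1}\log\eta$ (and the $\eta$-dependent piece of $a_3(\eta,\widetilde\beta)$ from \eqref{a3-defn}) is independent of $\eta$. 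Hence upon subtracting, the $\log r$, $\log(\log r)$, $K_0$ and $\frac{m}{n-2-nm}\log\widetilde\beta$ terms all cancel, the $a_3$-contribution is only $O(1/\log r)$, and
\[
g_{\lambda_2}^{1-m}(r)-g_{\lambda_1}^{1-m}(r)=\frac{2(n-1)(n-2-nm)}{(1-m)\widetilde\beta\,r^{\frac{n-2-nm}{m}}}\Big(\tfrac{1}{\gamma_1}\log(\eta_2/\eta_1)+o(1)\Big)\quad\text{as }r\to\infty,
\]
where $\frac{1}{\gamma_1}\log(\eta_2/\eta_1)=\log(\lambda_1/\lambda_2)>0$.

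Next I would control $\xi(r)^{m/(1-m)}$. By the leading term of Corollary \ref{g-expansion20}, equivalently by Theorem \ref{w-limit-thm}, both $g_{\lambda_1}^{1-m}(r)$ and $g_{\lambda_2}^{1-m}(r)$ are comparable to $\frac{2(n-1)(n-2-nm)}{(1-m)\widetilde\beta}\,r^{-\frac{n-2-nm}{m}}\log r$ as $r\to\infty$, hence so is $\xi(r)$, and therefore $\xi(r)^{m/(1-m)}$ is comparable to $\big(r^{-\frac{n-2-nm}{m}}\log r\big)^{m/(1-m)}$. Multiplying the two comparabilities and using the identity $\frac{2}{1-m}-\frac{n-2}{m}=-\frac{n-2-nm}{m(1-m)}$ to combine the powers of $r$, together with the factor $(\log r)^{m/(1-m)}$ coming from $\xi$, yields the first double inequality for all $r\ge R_1$ with suitable constants $0<c_5<c_6$. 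The second double inequality is then immediate from Theorem \ref{w-limit-thm}: since $w(r)=r^{(1-m)\widetilde\alpha/\widetilde\beta}g(r)^{1-m}=r^{\frac{n-2-nm}{m}}g(r)^{1-m}$ and $w(r)/\log r\to \frac{2(n-1)(n-2-nm)}{(1-m)\widetilde\beta}$, the quantity $g_{\lambda}^{1-m}(x)=|x|^{-\frac{n-2-nm}{m}}w(|x|)$ is comparable to $|x|^{-\frac{n-2-nm}{m}}\log|x|$ for $|x|\ge R_2(\lambda)$.

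I expect the main obstacle to be the bookkeeping in the second step: one must check carefully that the leading-order difference of the two expansions is genuinely the $\eta$-dependent constant $\log(\lambda_1/\lambda_2)$ and that the $\eta$-dependence hidden in $a_3(\eta,\widetilde\beta)$ enters only at order $1/\log r$, so that it cannot spoil the strictly positive two-sided bound. Once this is secured, the remainder is the elementary comparison of $\xi$ with the common leading profile and the exponent arithmetic above.
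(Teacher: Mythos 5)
Your proposal is correct and follows essentially the same route as the paper: the paper's proof of this corollary is precisely "Theorem \ref{w-limit-thm} and Corollary \ref{g-expansion20} combined with the mean value theorem argument of Corollary \ref{dif f}," which is exactly your plan, including the identification $g_{\lambda}=g_{\widetilde\beta,\lambda^{-\gamma_1}}$, the cancellation of all $\eta$-independent terms in the expansion, the observation that the $\eta$-dependence of $a_3(\eta,\widetilde\beta)$ only enters at order $1/\log r$, and the comparison of $\xi(r)$ with the common leading profile.
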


\section{Existence, uniqueness and asymptotic large time behaviour of singular solutions}\label{section-existence-uniqueness-asymptotic-behaviour}
\setcounter{equation}{0}
\setcounter{theorem}{0}

In this section, we will prove the existence, uniqueness, and asymptotic large time behaviour of singular solutions of \eqref{main-eq}. We first prove the $L^1$-contraction result with weight $|x|^{-\mu}$.

\begin{proof}[Proof of Theorem \ref{u-v-weight-mu-L1-contraction}]

We will use a modification of the proof of Lemma 4.1 of \cite{DS1} and Theorem 1.2 of \cite{HK} to prove the theorem. We choose $\eta\in C_0^\infty(\R^n)$ such that $0\leq \eta\leq1, $ $\eta=1$ for $|x|\leq 1,$ and $\eta=0$ for $|x|\geq 2.$ For any $R>2,$ and $0<\ve<1,$ let $\eta_{R}(x):=\eta(x/R)$, $\eta_\ve(x):=\eta(x/\ve)$, and $\eta_{\ve,R}(x)=\eta_R(x)-\eta_\ve(x).$ Then $|\D \eta_{\ve,R}|^2 +|\La \eta_{\ve,R}|\leq C\ve^{-2}$ for $\ve\leq |x|\leq 2\ve, $ and $|\D \eta_{\ve,R}|^2 +|\La \eta_{\ve,R}|\leq CR^{-2}$ for $R\leq |x|\leq 2R. $ Let $r_1$ and $\delta_1\in (0,1)$ be as given in Remark \ref{flambda12a} and Corollary \ref{dif f} respectively and $R_0=\max(2,r_1/\lambda_2)$.

Recall that by the Kato inequality (\cite{DK}, \cite{K}),
$$\Delta ( \phi_1- \phi_2 )_+ \ge \chi_{\{\phi_1 > \phi_2\}} \Delta ( \phi_1 - \phi_2 ) \quad \text{ in } \mathscr D'(\Omega),$$
where $\phi_1$, $\phi_2$ are $C^2$ functions on a domain $\Omega$. Since
$$
\frac{\partial}{\partial t}(u_1-u_2)_+ =\chi_{\{u>v\}}\frac{\partial}{\partial t}(u_1-u_2)=\chi_{\{u_1>u_2\}}\frac{n-1}{m} \Delta (u_1^m - u_2^m)\quad\mbox{ in }
\left(\R^n\setminus\{0\}\right)\times(0,\infty),
$$
by the Kato inequality,
\begin{equation*}
\left\{\begin{aligned}
&\frac{\partial}{\partial t} (u_1-u_2)_+ \leq \frac{n-1}{m} \Delta \left( u_1^m-u_2^m \right)_+ \quad\mbox{in $\mathscr D' \left(\left(\R^n\setminus\{0\}\right)\times(0,\infty)\right)$}\\
&\frac{\partial}{\partial t} (u_1-u_2)_- \leq \frac{n-1}{m} \Delta \left( u_1^m-u_2^m \right)_- \quad\mbox{in $\mathscr D' \left(\left(\R^n\setminus\{0\}\right)\times(0,\infty)\right)$}.
\end{aligned}\right.
\end{equation*}
Hence
\begin{equation*}
\frac{\partial}{\partial t} |u_1-u_2| \leq \frac{n-1}{m} \Delta |u_1^m-u_2^m| \quad\mbox{in $\mathscr D' \left(\left(\R^n\setminus\{0\}\right)\times(0,\infty)\right)$}.
\end{equation*}
Thus for any $t>0$, $0<\3\le\delta_1$, and $R\ge R_0$,
\begin{align}\label{differential-ineqn1}
&\frac{d}{dt}\int_{\re^n\setminus \{0\}} |u_1-u_2|(x,t) \eta_{\ve, R}(x) |x|^{-\mu}\, dx\notag\\
\le&\int_{\re^n\setminus \{0\}} |u_1^m-u_2^m|(x,t) \Delta \left(\eta_{\ve, R}(x) |x|^{-\mu}\right) \,dx\notag\\
=&\int_{\re^n\setminus \{0\}} |u_1^m-u_2^m|(x,t)(|x|^{-\mu} \Delta \eta_{\ve, R}(x)+2\nabla \eta_{\ve, R} \cdot \nabla |x|^{-\mu}+ \eta_{\ve, R} \Delta |x|^{-\mu})\, dx\notag\\
\le&\int_{\re^n\setminus \{0\}} |u_1^m-u_2^m|(x,t)(|x|^{-\mu} \Delta \eta_{\ve, R}(x)+2\nabla \eta_{\ve, R} \cdot \nabla |x|^{-\mu})\, dx
\end{align}
since $0< \mu\le \mu_1<n-2$ and $\La |x|^{-\mu}=\mu ( \mu -(n-2) )|x|^{-\mu-2}<0\quad\forall x\in\R^n\setminus\{0\}$.

Let $T_0>0$. By integrating \eqref{differential-ineqn1} over $(0,t)$, by \eqref{def Ulam}, \eqref{uvU} and Remark \ref{flambda12a}, for any $0<t<T_0$, $0<\3\le\delta_1$, and $R\ge R_0$,
\begin{equation}\label{weighted-L1-contraction-mu}
\begin{aligned}
&\int_{\R^n \setminus \{0\}}|u_1-u_2|(x,t) \eta_{\ve,R}(x) |x|^{-\mu}\,dx-\int_{\R^n \setminus \{0\}}|u_{0,1}(x)-u_{0,2}(x)|\eta_{\ve,R}(x) |x|^{-\mu}\,dx\\
\le& C R^{-2-\mu}\int_0^t\int_{B_{2R}\setminus B_R} U_{\ld_2}^m(x,s) \, dx\, ds+ C \ve^{-2-\mu} \int_0^t\int_{B_{2\ve}\setminus B_\ve} \left( U_{\ld_2}^m(x,s)- U_{\ld_1}^{m}(x,s) \right) \,dx\,ds\\
=&C R^{-2-\mu} \int_0^te^{-m\alpha s}\int_{B_{2R}\setminus B_R}f_{\lambda_2}^m(e^{-\beta s} x)\, dx\,ds\\
&\qquad+C\ve^{-2-\mu} \int_0^t e^{-m\alpha s}\int_{B_{2\ve}\setminus B_\ve}\left(f_{\lambda_2}^m(e^{-\beta s}x)-f_{\lambda_1}^m(e^{-\beta s}x)\right)\,dx\,ds\\
\le&C R^{-2-\mu} \int_0^te^{-m\alpha s}\int_{B_{2R}\setminus B_R}|e^{-\beta s} x|^{2-n} \, dx\,ds\\
&\qquad+C\ve^{-2-\mu} \int_0^t e^{-m\alpha s}\int_{B_{2\ve}\setminus B_\ve}\left(f_{\lambda_2}^m(e^{-\beta s}x)-f_{\lambda_1}^m(e^{-\beta s}x)\right)\,dx\,ds\\
\le& CR^{-\mu} +C\ve^{-2-\mu} \int_0^t e^{-m\alpha s}\int_{B_{2\ve}\setminus B_\ve}\left(f_{\lambda_2}^m(e^{-\beta s}x)-f_{\lambda_1}^m(e^{-\beta s}x)\right)\,dx\,ds\\
\le& CR^{-\mu} +I_1.
\end{aligned}
\end{equation}
By Corollary \ref{dif f} for sufficiently small $\3\in (0,\delta_1)$,
\begin{align}\label{near-0-integral-bd}
I_1\le&C\ve^{-2-\mu}\int_0^{\,t}\int_{B_{2\ve}\setminus B_\ve} |x|^{-\frac{2m}{1-m}} \left(\log ((e^{-\beta s}|x|)^{-1} \right)^{\frac{m}{1-m}-1}\,dx\,ds\notag\\
\le&C \ve^{n-\frac{2}{1-m}-\mu} \left(\log \3^{-1} \right)^{\frac{2m-1}{1-m}}.
\end{align}
Since $T_0$ is arbitrary, letting $\ve\to 0$ and $R\to\infty$ in \eqref{weighted-L1-contraction-mu}, by \eqref{mu-m-range} and \eqref{near-0-integral-bd}, \eqref{uu2} follows. By an argument similar to the proof of \eqref{uu2} we get \eqref{u+u2} and the theorem follows.
\end{proof}

We are ready to prove Theorem \ref{thm exi uni u}.

\begin{proof}[Proof of Theorem \ref{thm exi uni u}]
We first observe that uniqueness of solutions of \eqref{main-eq} follows directly from Theorem \ref{u-v-weight-mu-L1-contraction}. Since the proof of the existence of solutions of \eqref{main-eq} is similar to the proof of Theorem 1.3 of \cite{HK}, we will only sketch the proof here. By Theorem 2.2 of \cite{H2}
there exists a unique solution $u_R\in C(\overline{A_R}\times(0,\infty))\cap C^{\infty}(A_R\times(0,\infty))$ of
\begin{equation*}
\left\{\begin{aligned}
u_t=\La u^m\quad&\mbox{ in $A_R\times(0,\infty),$}\\
u =U_{\ld_1}\quad&\mbox{ in $\partial A_R \times(0,\infty),$}\\
u(\cdot,0)=u_0\quad&\mbox{ in $A_R,$}
\end{aligned}\right.
\end{equation*}
which satisfies \eqref{fde}, \eqref{utu} and \eqref{Ulam1u ent} in $A_R\times(0,\infty)$ and
$$
\|u_R(\cdot, t)- u_0\|_{L^1(A_R)}\to 0\quad\mbox{as $t\to0$}.
$$
Let $\{R_i\}_{i=1}^{\infty}\in (0,\infty)$ be a sequence such that $R_i\to\infty$ as $i\to\infty$. Since $u_{R_i}$ satisfies \eqref{Ulam1u ent} in $A_{R_i}\times(0,\infty)$ for any $i\in\Z^+$, the equation \eqref{fde} for the sequence $\{u_{R_i}\}_{i=1}^{\infty}$ is uniformly parabolic on every compact subset $K$ of $\R^n\setminus\{0\}$. Hence by the parabolic Schauder estimates \cite{LSU} the sequence $\{u_{R_i}\}_{i=1}^{\infty}$ is uniformly bounded in
$C^{2+\mu_0,1+(\mu_0/2)}(K)$ on every compact subset $K$ of $\R^n\setminus\{0\}$ for some constant $\mu_0\in (0,1)$. Then by the Ascoli Theorem and a diagonalization argument the sequence $\{u_{R_i}\}_{i=1}^{\infty}$ has a subsequence which we may assume without loss of generality to be the sequence itself that converges uniformly in $C^{2,1}(K)$ for any compact subset $K$ of $\R^n\setminus\{0\}$ to the solution $u$ of \eqref{main-eq} which satisfies \eqref{utu} and \eqref{Ulam1u ent} as $i\to\infty$.

Finally if $u_0$ is radially symmetric, then by uniqueness of solution of \eqref{main-eq} for any $t>0$, $u(x, t)$ is radially symmetric in $x\in\R^n\setminus\{0\}$ and the theorem follows.
\end{proof}

We next will prove several technical lemmas before proving the $L^1$-contraction result with weight $f_{\lambda}^{m\gamma}$.

\begin{lem}\label{rf'f}
Let $n\ge 3$, $0<m<\frac{n-2}{n}$, and $\alpha$, $\beta$, $\widetilde \alpha$, $\widetilde \beta$ be given by \eqref{ab} and \eqref{tilde ab} respectively. Let $f=f_1$ be the unique solution of \eqref{ode} that satisfies \eqref{gro inf} with $A=1$ and let $g$ be as given by \eqref{def g} with $f=f_1$. Then
\begin{equation}\label{r-g'/r-limit}
\lim_{r\to\infty}\frac{rg_r(r)}{g(r)}=-\frac{\widetilde \alpha}{\widetilde \beta}
\end{equation}
and
\begin{equation}\label{r-f'/r-limit}
\lim_{r\to 0}\frac{rf_r(r)}{f(r)}=-\frac{2}{1-m}.
\end{equation}
Hence there exist constants $r_3>0$, $r_3'>0$, $C_1>C_2>0$ and $C_1'>C_2'>0$ such that
\begin{equation}\label{rf'-f-equivalence-a}
-C_1rf_r(r) \le f(r)\le -C_2rf_r(r) \quad \forall 0<r\le r_3
\end{equation}
and
\begin{equation}\label{rg'-g-equivalence5}
-C_1'rg_r(r) \le g(r)\le -C_2'rg_r(r) \quad \forall r\ge r_3'.
\end{equation}
\end{lem}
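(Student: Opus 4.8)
The plan is to establish the two limits \eqref{r-g'/r-limit} and \eqref{r-f'/r-limit} first, and then read off the comparison inequalities \eqref{rf'-f-equivalence-a} and \eqref{rg'-g-equivalence5} as immediate consequences. For \eqref{r-g'/r-limit} I would not work with $g$ directly but exploit the logarithmic-derivative identity recorded in \eqref{pmq} during the proof of Lemma \ref{c123}, namely
$$p(s)=\frac{m}{1-m}\cdot\frac{rw_r(r)}{w(r)}=m\frac{\widetilde\alpha}{\widetilde\beta}\left(1+\frac{\widetilde\beta}{\widetilde\alpha}\cdot\frac{rg_r(r)}{g(r)}\right),\qquad s=\log r,$$
where $w$ is the auxiliary function \eqref{w-defn}. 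By Theorem \ref{w-limit-thm} the quantity $rw_r(r)$ tends to the finite constant $\frac{2(n-1)(n-2-nm)}{(1-m)\widetilde\beta}$ while $w(r)\to\infty$ as $r\to\infty$; hence $rw_r/w\to 0$ and so $p(s)\to 0$. Feeding this into the right-hand identity above and solving for the limit of $rg_r/g$ gives \eqref{r-g'/r-limit}.

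For \eqref{r-f'/r-limit} I would transfer the information at infinity for $g$ to the information at the origin for $f$ through the inversion \eqref{def g}, $f(r)=r^{-\frac{n-2}{m}}g(r^{-1})$. A direct differentiation yields
$$\frac{rf_r(r)}{f(r)}=-\frac{n-2}{m}-\frac{\rho\, g_r(\rho)}{g(\rho)}\Big|_{\rho=r^{-1}},$$
so letting $r\to0$ (equivalently $\rho\to\infty$) and using \eqref{r-g'/r-limit} together with the identity $\frac{\widetilde\alpha}{\widetilde\beta}=\frac{n-2}{m}-\frac{2}{1-m}$ from \eqref{alpha-beta-and-tilde-relation} collapses the right-hand side to $-\frac{2}{1-m}$, which is \eqref{r-f'/r-limit}.

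Finally, the comparison inequalities are purely a consequence of the two limits being finite and strictly negative. Since $rf_r/f\to-\frac{2}{1-m}<0$ as $r\to0$, there is an $r_3>0$ on which $rf_r/f$ is trapped strictly between two negative constants; in particular $f_r<0$ there, so $-rf_r>0$, and rearranging the trapping inequality bounds $f$ above and below by positive multiples of $-rf_r$, which is \eqref{rf'-f-equivalence-a}. The bound \eqref{rg'-g-equivalence5} for $g$ near infinity follows in exactly the same way from \eqref{r-g'/r-limit}, where $\frac{\widetilde\alpha}{\widetilde\beta}>0$ (by \eqref{alpha-beta-and-tilde-relation}) guarantees $g_r<0$ for all large $r$.

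The computations are all routine once the right identity is in hand; the only genuine step is the observation that, because $rw_r$ has a finite limit while $w$ diverges, the logarithmic derivative $p(s)$ must vanish in the limit. Everything after that is bookkeeping with the chain rule and the algebraic relation \eqref{alpha-beta-and-tilde-relation}, so I anticipate no serious obstacle.
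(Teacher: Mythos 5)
Your proposal is correct and follows essentially the same route as the paper: the identity you quote from \eqref{pmq} is just the logarithmic-derivative relation $rw_r=(1-m)w\left(\frac{\widetilde\alpha}{\widetilde\beta}+\frac{rg_r}{g}\right)$ that the paper differentiates directly from \eqref{w-defn}, and both arguments then combine the finite limit of $rw_r$ from Theorem \ref{w-limit-thm} with $w\to\infty$ from Lemma \ref{c123} to force $rw_r/w\to 0$, transfer the result to $f$ via the inversion \eqref{def g} and the relation \eqref{alpha-beta-and-tilde-relation}, and read off the two-sided bounds from the negativity and finiteness of the limits. No gaps; this matches the paper's proof step for step.
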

\begin{proof}
Let $w$ be as given by \eqref{w-defn}. By Lemma \ref{c123} and Theorem \ref{w-limit-thm},
\begin{align*}
&rw_r=(1-m)w(r) \left( \frac{\widetilde \alpha}{\widetilde \beta} +\frac{rg_r(r)}{g(r)}\right)\quad\forall r>0\notag\\
\Rightarrow\quad&(1-m)\lim_{r\to\infty}\left(\frac{\widetilde \alpha}{\widetilde \beta} +\frac{rg_r(r)}{g(r)}\right)
=\underset{\substack{r\to\infty}}{\lim}\frac{rw_r(r)}{w(r)}=\frac{\underset{\substack{r\to\infty}}{\lim}rw_r(r)}{\underset{\substack{r\to\infty}}{\lim}w(r)}=0
\end{align*}
and \eqref{r-g'/r-limit} follows.
By \eqref{f-g-relation}, \eqref{alpha-beta-and-tilde-relation} and \eqref{r-g'/r-limit},
\begin{align}
&\frac{rf_r(r)}{f(r)}=-\frac{n-2}{m}-\frac{r^{-1}g_r (r^{-1})}{g(r^{-1})}\quad\forall r>0\label{f'-expression}\\
\Rightarrow\quad&\lim_{r\to 0}\frac{rf_r(r)}{f(r)}=-\frac{n-2}{m}-\lim_{r\to 0}\frac{r^{-1}g_r(r^{-1})}{g(r^{-1})}=-\frac{n-2}{m}+\frac{\widetilde \alpha}{\widetilde \beta}=-\frac{2}{1-m}\notag
\end{align}
and \eqref{r-f'/r-limit} follows. By \eqref{r-g'/r-limit} and \eqref{r-f'/r-limit} there exist constants $r_3>0$, $r_3'>0$, $C_2>C_1>0$ and $C_2'>C_1'>0$ such that \eqref{rf'-f-equivalence-a} and \eqref{rg'-g-equivalence5} hold and the lemma follows.
\end{proof}

\begin{lem}\label{rf'f-infty-limit}
Let $n\ge 3$, $0<m<\frac{n-2}{n}$ and let $f=f_1$ be the solution of \eqref{ode} which satisfies \eqref{gro inf} with $A=1$. Let $g$ be as given by \eqref{def g} with $f=f_1$. Then
\begin{equation}\label{r-g'/g-infty-limit-g-at-origin}
\lim_{r\to 0}rg_r(r)=0,\qquad g(0)=1,
\end{equation}
and
\begin{equation}\label{r-f'/r-infty-limit}
\lim_{r\to\infty}\frac{rf_r(r)}{f(r)}=-\frac{n-2}{m}.
\end{equation}
Hence there exist constants $r_4>0$, $r_4'>0$, and $C_3>C_4>0$ such that
\begin{equation}\label{rf'-f-equivalence}
-C_3rf_r(r) \le f(r)\le -C_4rf_r(r) \quad \forall r\ge r_4
\end{equation}
and
\begin{equation}\label{rg'-g-ratio-bd}
|rg_r(r)| \le g(r)\quad \forall 0\le r\le r_4'.
\end{equation}
\end{lem}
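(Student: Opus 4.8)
The plan is to read off every assertion from the inversion relation $f(r)=r^{-\frac{n-2}{m}}g(r^{-1})$ in \eqref{f-g-relation} together with the behaviour of $g$ at the origin furnished by Theorem \ref{g-existence-thm}.

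First I would fix the origin data of $g$. Because $f=f_1$ satisfies \eqref{gro inf} with $A=1$, Lemma \ref{lem gg} identifies $g$ with the solution of \eqref{g-eqn} corresponding to $\eta=1$, so that $g(0)=1$ at once. To obtain $\lim_{r\to0}rg_r(r)=0$ I would distinguish the two regimes of Theorem \ref{g-existence-thm}. When $0<m<\frac{n-2}{n+1}$, part (a) gives $g\in C^1([0,\infty))$ with $g_r(0)=0$, and continuity yields $rg_r(r)\to0$. When $\frac{n-2}{n+1}\le m<\frac{n-2}{n}$, the boundary expansion \eqref{eq-initial-m-large} gives $r^{\delta_1}g_r(r)\to-\frac{\widetilde\alpha}{n-2-2m}$ with $\delta_1\in[0,1)$ as in \eqref{defn-delta0-1}; hence $rg_r(r)$ behaves like a constant multiple of $r^{1-\delta_1}$ and tends to $0$ since $1-\delta_1>0$. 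This proves \eqref{r-g'/g-infty-limit-g-at-origin}.

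Next I would establish \eqref{r-f'/r-infty-limit} directly from the identity \eqref{f'-expression} already obtained in the proof of Lemma \ref{rf'f},
\[
\frac{rf_r(r)}{f(r)}=-\frac{n-2}{m}-\frac{r^{-1}g_r(r^{-1})}{g(r^{-1})}\qquad\forall r>0,
\]
by setting $s=r^{-1}\to0$ as $r\to\infty$. The previous step gives $sg_r(s)\to0$ and $g(s)\to g(0)=1$, so the quotient $sg_r(s)/g(s)\to0$, which forces $\frac{rf_r(r)}{f(r)}\to-\frac{n-2}{m}$.

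The two remaining bounds are elementary corollaries of these limits. Since the limit in \eqref{r-f'/r-infty-limit} equals $-\frac{n-2}{m}<0$, for small $\varepsilon>0$ there is $r_4$ with $\frac{n-2}{m}-\varepsilon\le -rf_r(r)/f(r)\le\frac{n-2}{m}+\varepsilon$ for all $r\ge r_4$; multiplying through by $f(r)>0$ and inverting yields \eqref{rf'-f-equivalence} with $C_3,C_4$ the reciprocals of $\frac{n-2}{m}\mp\varepsilon$. For \eqref{rg'-g-ratio-bd}, the limits $g(r)\to1$ and $rg_r(r)\to0$ give $g(r)\ge\frac12$ and $|rg_r(r)|\le\frac12$ on some interval $[0,r_4']$ (the value at $r=0$ being $0\le1=g(0)$), so $|rg_r(r)|\le g(r)$ there. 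I expect the only genuinely delicate point to be the singular regime $\frac{n-2}{n+1}\le m<\frac{n-2}{n}$, where $g_r$ itself blows up at the origin and one must exploit the precise exponent $\delta_1<1$ to see that $rg_r(r)$ nonetheless vanishes in the limit.
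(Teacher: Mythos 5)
Your proposal is correct and follows essentially the same route as the paper: the paper's (very terse) proof likewise reads $g(0)=1$ and $\lim_{r\to 0}rg_r(r)=0$ off Theorem \ref{g-existence-thm} (your two-case analysis, using $g_r(0)=0$ when $0<m<\frac{n-2}{n+1}$ and the exponent $\delta_1<1$ in \eqref{eq-initial-m-large} otherwise, is exactly the detail it leaves implicit), then feeds this into the identity \eqref{f'-expression} and extracts the comparability bounds from the resulting limits. One harmless remark: the constants you produce satisfy $-C_4rf_r\le f\le -C_3rf_r$ with $C_3>C_4$, whereas \eqref{rf'-f-equivalence} as printed (namely $-C_3rf_r\le f\le -C_4rf_r$ together with $C_3>C_4>0$) cannot hold for any constants since $-rf_r>0$ there; this is a typo in the paper's statement (the same slip appears in Lemma \ref{rf'f}, whose proof writes $C_2>C_1>0$ against its own statement), and your version is the correct reading.
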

\begin{proof}
By \eqref{gro inf}, \eqref{f'-expression} and Theorem \ref{g-existence-thm},
\begin{align*}
&\lim_{r\to 0}rg_r(r)=0\quad\mbox{ and }\quad g(0)=1\notag\\
\Rightarrow\quad&\lim_{r\to 0}\frac{rg_r(r)}{g(r)}=0\\
\Rightarrow\quad&\lim_{r\to\infty}\frac{rf_r(r)}{f(r)}=-\frac{n-2}{m}-\lim_{r\to\infty}\frac{r^{-1}g_r(r^{-1})}{g(r^{-1})}=-\frac{n-2}{m}
\end{align*}
and \eqref{r-g'/g-infty-limit-g-at-origin}, \eqref{r-f'/r-infty-limit}, follows. By \eqref{r-g'/g-infty-limit-g-at-origin} and \eqref{r-f'/r-infty-limit} there exist constants $r_4>0$, $r_4'>0$, and $C_3>C_4>0$ such that \eqref{rf'-f-equivalence} and \eqref{rg'-g-ratio-bd} hold and the lemma follows.
\end{proof}

By \eqref{f-lambda-defn}, \eqref{def g lam}, Lemma \ref{rf'f} and Lemma \ref{rf'f-infty-limit}, we have the following three corollaries.

\begin{cor}\label{rf'f-lambda-cor}
Let $n\ge 3$, $0<m<\frac{n-2}{n}$, $\lambda>0$, $f=f_{\lambda}$ be as given by \eqref{f-lambda-defn} and $r_3>0$, $C_1>C_2>0$ be as given by Lemma \ref{rf'f}. Then
\begin{equation*}
\lim_{r\to 0}\frac{rf_{\lambda,r}(r)}{f_{\lambda}(r)}=-\frac{2}{1-m}
\end{equation*}
and
\begin{equation*}
-C_1rf_{\lambda,r}(r) \le f_{\lambda}(r)\le -C_2rf_{\lambda,r}(r) \quad \forall 0<r\le r_3/\lambda.
\end{equation*}
\end{cor}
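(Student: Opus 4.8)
The plan is to deduce both assertions directly from the scaling relation \eqref{f-lambda-defn} together with Lemma \ref{rf'f}, exploiting the fact that the logarithmic derivative $rf_r/f$ is invariant under the rescaling $f_1\mapsto f_\lambda$. First I would record that by \eqref{f-lambda-defn}, viewing $f_\lambda$ as a radial function of $r=|x|$, one has $f_\lambda(r)=\lambda^{\frac{2}{1-m}}f_1(\lambda r)$, and hence, differentiating in $r$,
\[
f_{\lambda,r}(r)=\lambda^{\frac{2}{1-m}+1}f_{1,r}(\lambda r)\qquad\forall r>0.
\]

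For the limit, I would form the ratio and observe that the prefactor $\lambda^{\frac{2}{1-m}}$ cancels while the extra factor of $\lambda$ combines with $r$, giving
\[
\frac{rf_{\lambda,r}(r)}{f_\lambda(r)}=\frac{(\lambda r)f_{1,r}(\lambda r)}{f_1(\lambda r)}\qquad\forall r>0.
\]
Setting $\rho=\lambda r$, we have $\rho\to 0$ as $r\to 0$, so the claimed value $-\frac{2}{1-m}$ follows at once from \eqref{r-f'/r-limit} in Lemma \ref{rf'f}.

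For the two-sided bound, I would start from \eqref{rf'-f-equivalence-a} in Lemma \ref{rf'f}, namely $-C_1\rho f_{1,r}(\rho)\le f_1(\rho)\le -C_2\rho f_{1,r}(\rho)$ for all $0<\rho\le r_3$, and evaluate it at $\rho=\lambda r$. Multiplying through by the positive constant $\lambda^{\frac{2}{1-m}}$ and using $rf_{\lambda,r}(r)=\lambda^{\frac{2}{1-m}}(\lambda r)f_{1,r}(\lambda r)$ together with $f_\lambda(r)=\lambda^{\frac{2}{1-m}}f_1(\lambda r)$ converts the three terms into $-C_1 rf_{\lambda,r}(r)$, $f_\lambda(r)$, and $-C_2 rf_{\lambda,r}(r)$, which is exactly the desired chain; since the common factor is positive the inequalities are preserved and the constants $C_1,C_2$ are unchanged. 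The admissible range $0<\rho\le r_3$ translates to $0<\lambda r\le r_3$, i.e.\ $0<r\le r_3/\lambda$, matching the statement.

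This is essentially a bookkeeping argument, so I do not anticipate a genuine obstacle; the only point requiring care is checking that the scaling leaves both $rf_r/f$ and the two-sided comparison (with its very constants) exactly invariant, which is precisely why the corollary can quote the same $C_1,C_2$ and $r_3$ from Lemma \ref{rf'f} with only the endpoint $r_3$ rescaled to $r_3/\lambda$.
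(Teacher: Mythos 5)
Your proposal is correct and is exactly the argument the paper intends: the paper states this corollary without a written proof, citing only the scaling relation \eqref{f-lambda-defn} together with Lemma \ref{rf'f}, and your computation (invariance of $rf_r/f$ under $f_1\mapsto f_\lambda$ and transport of \eqref{rf'-f-equivalence-a} via $\rho=\lambda r$ with the same constants $C_1,C_2$) is precisely the bookkeeping that justification amounts to, matching the explicit proof the paper does write for the analogous Corollary \ref{rg-lambda'-g-lambda-ratio-cor}.
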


\begin{cor}\label{rf'f-lambda-infty-cor}
Let $n\ge 3$, $0<m<\frac{n-2}{n}$, $\lambda>0$, $f=f_{\lambda}$ be as given by \eqref{f-lambda-defn} and $r_4>0$, $C_3>C_4>0$ be as given by Lemma \ref{rf'f-infty-limit}. Then,
\begin{equation*}
\lim_{r\to\infty}\frac{rf_{\lambda,r}(r)}{f_{\lambda}(r)}=-\frac{n-2}{m}
\end{equation*}
and
\begin{equation*}
-C_3rf_{\lambda,r}(r) \le f_{\lambda}(r)\le -C_4rf_{\lambda,r}(r) \quad \forall r\ge r_4/\lambda.
\end{equation*}
\end{cor}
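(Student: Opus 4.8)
The plan is to deduce both assertions directly from Lemma \ref{rf'f-infty-limit} applied to $f_1$, using only the scaling relation \eqref{f-lambda-defn}. First I would differentiate $f_\lambda(r)=\lambda^{\frac{2}{1-m}}f_1(\lambda r)$ in $r$ to obtain
\begin{equation*}
f_{\lambda,r}(r)=\lambda^{\frac{2}{1-m}+1}f_{1,r}(\lambda r)\quad\forall r>0,
\end{equation*}
so that, writing $\rho=\lambda r$, the powers of $\lambda$ cancel in the logarithmic derivative:
\begin{equation*}
\frac{rf_{\lambda,r}(r)}{f_\lambda(r)}=\frac{\lambda r\,f_{1,r}(\lambda r)}{f_1(\lambda r)}=\frac{\rho f_{1,r}(\rho)}{f_1(\rho)}.
\end{equation*}
Since $\rho\to\infty$ as $r\to\infty$, the first limit follows immediately from \eqref{r-f'/r-infty-limit}.

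For the two-sided bound I would record the homogeneity of degree $\frac{2}{1-m}$ shared by $f_\lambda$ and by $rf_{\lambda,r}$: from the displayed derivative one has $rf_{\lambda,r}(r)=\lambda^{\frac{2}{1-m}}\,\rho f_{1,r}(\rho)$ and $f_\lambda(r)=\lambda^{\frac{2}{1-m}}f_1(\rho)$ with $\rho=\lambda r$. Then the condition $r\ge r_4/\lambda$ is exactly $\rho\ge r_4$, the range in which \eqref{rf'-f-equivalence} of Lemma \ref{rf'f-infty-limit} holds for $f_1$, namely $-C_3\rho f_{1,r}(\rho)\le f_1(\rho)\le -C_4\rho f_{1,r}(\rho)$. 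Multiplying this chain of inequalities through by the positive factor $\lambda^{\frac{2}{1-m}}$ and substituting the two homogeneity identities yields
\begin{equation*}
-C_3 r f_{\lambda,r}(r)\le f_\lambda(r)\le -C_4 r f_{\lambda,r}(r)\quad\forall r\ge r_4/\lambda,
\end{equation*}
with the same constants $C_3>C_4>0$ as for $f_1$, as claimed.

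There is essentially no analytic obstacle here; the corollary is a pure rescaling of Lemma \ref{rf'f-infty-limit}. The only point that requires care is the bookkeeping of the exponents of $\lambda$ — confirming that both $f_\lambda$ and $rf_{\lambda,r}$ scale with the identical factor $\lambda^{\frac{2}{1-m}}$, the extra $\lambda$ produced by the chain rule being absorbed into $\rho f_{1,r}(\rho)$, so that this common factor cancels in the ratio and divides out of the inequalities, leaving the constants untouched. The transformation of the threshold radius $r_4\mapsto r_4/\lambda$ is simply the image of $\rho\ge r_4$ under $\rho=\lambda r$.
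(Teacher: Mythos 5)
Your proof is correct and takes exactly the route the paper intends: the paper states Corollary \ref{rf'f-lambda-infty-cor} follows directly from the scaling relation \eqref{f-lambda-defn} and Lemma \ref{rf'f-infty-limit} without writing out details, and your computation (the chain rule giving $rf_{\lambda,r}(r)=\lambda^{\frac{2}{1-m}}\rho f_{1,r}(\rho)$ with $\rho=\lambda r$, cancellation in the logarithmic derivative, and multiplication of \eqref{rf'-f-equivalence} by $\lambda^{\frac{2}{1-m}}$) is precisely that rescaling argument made explicit, including the correct transformation of the threshold $r_4\mapsto r_4/\lambda$.
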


\begin{cor}\label{rg-lambda'-g-lambda-ratio-cor}
Let $n\ge 3$, $0<m<\frac{n-2}{n}$, $\lambda>0$, and $\alpha$, $\beta$, $\widetilde \alpha$, $\widetilde \beta$ be given by \eqref{ab} and \eqref{tilde ab} respectively. Let $f=f_1$ be the unique solution of \eqref{ode} that satisfies \eqref{gro inf} with $A=1$ and let $g_{\lambda}$ be as given by \eqref{def g lam}. Let $r_3'>0$ and $r_4'>0$ be as given by Lemma \ref{rf'f}
and Lemma \ref{rf'f-infty-limit} respectively. Then
\begin{equation}\label{r-g-lambda'/r-limit}
\lim_{r\to\infty}\frac{rg_{\lambda,r}(r)}{g_{\lambda}(r)}=-\frac{\widetilde \alpha}{\widetilde \beta}
\end{equation}
\begin{equation}\label{rg-lambda'-g-lambda-equivalence5}
-C_1'rg_{\lambda,r}(r) \le g_{\lambda}(r)\le -C_2'rg_{\lambda,r}(r) \quad \forall r\ge \lambda r_3'
\end{equation}
and
\begin{equation}\label{g-lambdaderivative-g-lambda-ratio-bd3}
|rg_{\lambda,r}(r)| \le g_{\lambda}(r)\quad \forall 0\le r\le\lambda r_4'.
\end{equation}
\end{cor}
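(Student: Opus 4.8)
The plan is to reduce every assertion of the corollary to the already-settled case $\lambda=1$ treated in Lemma \ref{rf'f} and Lemma \ref{rf'f-infty-limit}, by exploiting the exact scaling relating $g_\lambda$ to $g_1$. Here $g_1$ is precisely the function $g$ appearing in those two lemmas, since $g_1$ is \eqref{def g lam} evaluated at $\lambda=1$ and $f=f_1$ is the $A=1$ profile. First I would record the scaling identity: combining the definition \eqref{def g lam} of $g_\lambda$ with the definition \eqref{f-lambda-defn} of $f_\lambda$ and the inverse relation $f_1(t)=t^{-\frac{n-2}{m}}g_1(t^{-1})$ (which is \eqref{def g lam} at $\lambda=1$ read backwards), a direct computation gives
\begin{equation*}
g_\lambda(r)=r^{-\frac{n-2}{m}}\lambda^{\frac{2}{1-m}}f_1(\lambda/r)=\lambda^{\frac{2}{1-m}-\frac{n-2}{m}}g_1(r/\lambda)=\lambda^{-\gamma_1}g_1(r/\lambda)\quad\forall r>0,
\end{equation*}
where the last equality uses the definition \eqref{gamma1-defn} of $\gamma_1$.

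Differentiating this identity in $r$ yields $g_{\lambda,r}(r)=\lambda^{-\gamma_1-1}g_{1,r}(r/\lambda)$, so that, writing $\rho=r/\lambda$,
\begin{equation*}
rg_{\lambda,r}(r)=\lambda^{-\gamma_1}\,\rho\,g_{1,r}(\rho)\quad\text{and}\quad g_\lambda(r)=\lambda^{-\gamma_1}g_1(\rho).
\end{equation*}
In particular the scale factor $\lambda^{-\gamma_1}$ cancels in the logarithmic-derivative ratio, giving $\frac{rg_{\lambda,r}(r)}{g_\lambda(r)}=\frac{\rho g_{1,r}(\rho)}{g_1(\rho)}$. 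Now I would transport the three conclusions of the $\lambda=1$ lemmas one at a time. Since $r\to\infty$ forces $\rho\to\infty$, the limit \eqref{r-g'/r-limit} of Lemma \ref{rf'f} gives $\lim_{r\to\infty}\frac{rg_{\lambda,r}(r)}{g_\lambda(r)}=\lim_{\rho\to\infty}\frac{\rho g_{1,r}(\rho)}{g_1(\rho)}=-\frac{\widetilde\alpha}{\widetilde\beta}$, which is exactly \eqref{r-g-lambda'/r-limit}.

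For the two-sided bounds I would substitute the displayed expressions into the inequalities and cancel $\lambda^{-\gamma_1}$. The estimate \eqref{rg'-g-equivalence5} of Lemma \ref{rf'f}, namely $-C_1'\rho g_{1,r}(\rho)\le g_1(\rho)\le -C_2'\rho g_{1,r}(\rho)$ for $\rho\ge r_3'$, multiplied through by $\lambda^{-\gamma_1}$ and rewritten via the two displays above, becomes \eqref{rg-lambda'-g-lambda-equivalence5}, valid precisely when $\rho=r/\lambda\ge r_3'$, i.e. $r\ge\lambda r_3'$. Likewise the bound \eqref{rg'-g-ratio-bd} of Lemma \ref{rf'f-infty-limit}, namely $|\rho g_{1,r}(\rho)|\le g_1(\rho)$ for $0\le\rho\le r_4'$, transforms into \eqref{g-lambdaderivative-g-lambda-ratio-bd3} on the range $0\le r\le\lambda r_4'$. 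Since every claim follows from its $\lambda=1$ counterpart by this single exact rescaling, there is no genuine analytic obstacle; the only point demanding care is the bookkeeping of the exponent $-\gamma_1$ and the correct translation of the validity thresholds $r_3'$ and $r_4'$ into $\lambda r_3'$ and $\lambda r_4'$ under the substitution $\rho=r/\lambda$.
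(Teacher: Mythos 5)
Your proposal is correct and is essentially identical to the paper's own proof: the paper likewise establishes the scaling identity $g_{\lambda}(r)=\lambda^{\frac{2}{1-m}-\frac{n-2}{m}}g_1(r/\lambda)$ (your $\lambda^{-\gamma_1}$), differentiates it to get $rg_{\lambda,r}(r)=\lambda^{\frac{2}{1-m}-\frac{n-2}{m}}(r/\lambda)g_{1,r}(r/\lambda)$, and then transports \eqref{r-g'/r-limit}, \eqref{rg'-g-equivalence5} and \eqref{rg'-g-ratio-bd} through this rescaling exactly as you do. The cancellation of the scale factor in the ratio and the translation of the thresholds $r_3'$, $r_4'$ into $\lambda r_3'$, $\lambda r_4'$ match the paper's argument.
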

\begin{proof}
By \eqref{f-lambda-defn} and \eqref{def g lam},
\begin{align}
&g_{\lambda}(r)=r^{-\frac{n-2}{m}}\lambda^{\frac{2}{1-m}}f_1(\lambda r^{-1})=\lambda^{\frac{2}{1-m}-\frac{n-2}{m}}g_1(r/\lambda)\label{g-lambda-g-1-relation}\\
\Rightarrow\quad&rg_{\lambda,r}(r)=\lambda^{\frac{2}{1-m}-\frac{n-2}{m}}(r/\lambda)g_{1,r}(r/\lambda)\quad\forall r\ge 0.\label{g-lambda-derivative-g-lambda-bd}
\end{align}
By \eqref{r-g'/r-limit}, \eqref{rg'-g-equivalence5}, \eqref{rg'-g-ratio-bd}, \eqref{g-lambda-g-1-relation}, and \eqref{g-lambda-derivative-g-lambda-bd}, we get \eqref{r-g-lambda'/r-limit}, \eqref{rg-lambda'-g-lambda-equivalence5}, and \eqref{g-lambdaderivative-g-lambda-ratio-bd3}, and
the corollary follows.
\end{proof}

We will now prove Theorem \ref{u-v-L1-contraction2}.

\begin{proof}[Proof of Theorem \ref{u-v-L1-contraction2}]
We will use a modification of the proof of Lemma 4.1 of \cite{DS1} and Theorem 1.2 of \cite{HK} to prove the theorem. Let $r_1$,  $r_2(\lambda)$, $r_3$, and $r_4$ be given by Remark \ref{flambda12a},  Lemma  \ref{rf'f} and Lemma \ref{rf'f-infty-limit}, respectively. Let  $T_0>0$,
\begin{equation*}
\3_0=\frac{\min (r_2(\lambda_1),r_2(\lambda_3), r_3,1)}{e^{2|\beta|T_0}\max (\lambda_3,2)}\quad\mbox{ and }\quad R_0=\max\left(2,\frac{\max (r_1,r_4)}{\min (\lambda_2,\lambda_3)}\right)
\end{equation*}
and
\begin{equation*}
h(x)=f_{\lambda_3}^m(x)\quad\forall x\in\R^n\setminus\{0\}.
\end{equation*}
Since $\frac{n-2}{n+2}\le m<\frac{n-2}{n} $ implies $0<\gamma_2\le 1$, by \eqref{ode} and \eqref{f dec}, 
\begin{equation}\label{laplace-f-m-gamma-neg}
\Delta h^{\gamma_2}=\gamma_2(\gamma_2-1) h^{{\gamma_2}-2}(h_r)^2+\gamma_2 h^{{\gamma_2}-1}\Delta h<0
\quad\mbox{ in }\R^n\setminus\{0\}.
\end{equation}
Let $q(x,t)=|u_1(x,t)-u_2(x,t)|$ and $\eta_{\ve, R}$ be as in the proof of Theorem \ref{u-v-weight-mu-L1-contraction}. By Kato's inequality (\cite{DK}, \cite{K}), 
\begin{equation}\label{eq-q-diff-rescaled-new}
q_t \leq \frac{n-1}{m} \Delta \left( a(x,t)q \right) \quad\mbox{in $\mathscr D '((\R^n\setminus\{0\})\times(0,\infty))$},
\end{equation}
where 
\begin{equation}\label{eq-a-lower-bd-new}
\begin{aligned}
m U_{\ld_2}^{m-1}(x,t)\leq a(x,t):=\int_0^1 \frac{m \, ds }{ \left\{s u_1+(1-s)u_2\right\}^{1-m}} \leq m U_{\ld_1}^{m-1}(x,t) \quad\forall x\in\R^n\setminus\{0\}, t>0.
\end{aligned}
\end{equation}
Then by \eqref{def Ulam}, \eqref{uvU}, \eqref{laplace-f-m-gamma-neg}, \eqref{eq-q-diff-rescaled-new}, \eqref{eq-a-lower-bd-new}, Remark \ref{flambda12a}, Corollary \ref{dif f}, Corollary \ref{rf'f-lambda-cor}, and Corollary \ref{rf'f-lambda-infty-cor}, for any $0<\3\le\3_0$, $R\ge R_0$ and $0<t<T_0$,
\begin{align}
&\frac{d}{dt}\int_{\R^n \setminus \{0\}}q(x,t)\eta_{\ve,R}(x) f_{\lambda_3}^{m \gamma_2} (x)\, dx\notag\\
\le&\frac{n-1}{m} \int_{\R^n \setminus \{0\}}a(x,t) q(x,t) \left( f_{\lambda_3}^{m \gamma_2} \Delta \eta_{\ve, R}+2\nabla f_{\lambda_3}^{m \gamma_2} \cdot \nabla \eta_{\ve, R} + \eta_{\ve, R} \Delta f_{\lambda_3}^{m \gamma_2}\right) \, dx\notag\\
\le&\frac{n-1}{m} \int_{\R^n \setminus \{0\}}a(x,t) q(x,t) \left[ f_{\lambda_3}^{m \gamma_2} \Delta \eta_{\ve, R}+2\nabla f_{\lambda_3}^{m \gamma_2} \cdot \nabla \eta_{\ve, R}
\right ] \, dx\notag\\
\le&C R^{-2} \int_{B_{2R}\setminus B_R}U_{\lambda_1}^{m-1}(x,t)U_{\lambda_2}(x,t)f_{\lambda_3}^{m \gamma_2} (x)\, dx\notag\\
&\qquad + C \ve^{-2}\int_{B_{2\3}\setminus B_{\3}}U_{\lambda_1}^{m-1}(x,t)q(x,t)f_{\lambda_3}^{m \gamma_2} (x)\, dx\label{integral-t-derivative12}\\
\le&C R^{-2} \int_{B_{2R}\setminus B_R}U_{\lambda_1}^{m-1}(x,t)U_{\lambda_2}(x,t)f_{\lambda_3}^{m \gamma_2} (x)\, dx\notag\\
&\qquad + C \ve^{-2}\int_{B_{2\3}\setminus B_{\3}}U_{\lambda_1}^{m-1}(x,t)(U_{\lambda_2}(x,t)-U_{\lambda_1}(x,t))f_{\lambda_3}^{m \gamma_2} (x)\, dx\notag\\
\le&C R^{-2}e^{-m\alpha t}\int_{B_{2R}\setminus B_R}f_{\lambda_1}^{m-1}(e^{-\beta t} x)f_{\lambda_2}(e^{-\beta t} x)f_{\lambda_3}^{m \gamma_2} (x)\, dx\notag\\
&\qquad + C \ve^{-2} e^{-m\alpha t}\int_{B_{2\3}\setminus B_{\3}}f_{\lambda_1}^{m-1}(e^{-\beta t} x)(f_{\lambda_2}(e^{-\beta t}x)-f_{\lambda_1}(e^{-\beta t}x))f_{\lambda_3}^{m \gamma_2}(x)\, dx.\notag\\
\le&C R^{n-2}e^{-m\alpha t}(e^{-\beta t}R)^{2-n}R^{-(n-2)\gamma_2}\notag\\
&\qquad + C \ve^{-2} e^{-m\alpha t}\int_{B_{2\3}\setminus B_{\3}}\left(\frac{e^{-2\beta t}\ve^2}{\log[(e^{- \beta t}\ve)^{-1}]} \right)\ve^{-\frac{2}{1-m}}\left(\log [(e^{- \beta t}\ve)^{-1}] \right)^{\frac{m}{1-m}}\left(\frac{\log\ve^{-1}}{\ve^2} \right)^{\frac{m\gamma_2}{1-m}}\, dx\notag\\
\le&CR^{-(n-2) \gamma_2}+C\left(\log\3^{-1}\right)^{\frac{n-4}{2}}.\label{integral-t-derivative}
\end{align}
Integrating \eqref{integral-t-derivative} over $(0,t)$, 
\begin{align}\label{q-f-lambda-l1-ineqn}
&\int_{\R^n\setminus\{0\}} q(x,t) \eta_{\ve,R} (x)f_{\lambda_3}^{m \gamma_2}(x)\, dx- \int_{\R^n\setminus\{0\}} q(x,0) \eta_{\ve,R} (x)f_{\lambda_3}^{m \gamma_2}(x)\, dx\notag\\
\le& C\left( R^{-(n-2)\gamma_2 } +\left(\log \ve^{-1} \right)^{\frac{n-4}{2}}\right)t \le CT_0
\quad\forall 0<\3\le\3_0, R\ge R_0, 0<t<T_0.
\end{align}
When $n=3$, letting $R\to\infty$ and $\3\to 0$ in \eqref{q-f-lambda-l1-ineqn}, we get that \eqref{uu0a} holds for $n=3$ and any $0<t<T_0$. Since $T_0$ is arbitrary, \eqref{uu0a} holds for $n=3$.

When $n=4$, letting $R\to\infty$ and $\3\to 0$ in \eqref{q-f-lambda-l1-ineqn}, we get
\begin{align}\label{q-f-lambda-l1-ineqn2}
&\int_{\R^n\setminus\{0\}} q(x,t)f_{\lambda_3}^{m \gamma_2}(x)\, dx
\le\int_{\R^n\setminus\{0\}} q(x,0) f_{\lambda_3}^{m \gamma_2}(x)\, dx+CT_0\le C'\notag\\
\Rightarrow\quad&\int_0^{T_0}\int_{\R^n\setminus\{0\}} q(x,t)f_{\lambda_3}^{m \gamma_2}(x)\, dx\,ds
\le C'T_0<\infty.
\end{align}
Then by \eqref{integral-t-derivative12} and an argument similar to the above one,
\begin{align}\label{q-f-lambda-l1-ineqn3}
&\int_{\R^n \setminus \{0\}}q(x,t)\eta_{\ve,R}(x) f_{\lambda_3}^{m \gamma_2} (x)\, dx\notag\\
\le&\int_{\R^n \setminus \{0\}}q(x,0)\eta_{\ve,R}(x) f_{\lambda_3}^{m \gamma_2} (x)\, dx
+CR^{-(n-2)\gamma_2 } +\frac{C}{\log\3^{-1}}\int_0^t\int_{\R^n\setminus\{0\}} q(x,s)f_{\lambda_3}^{m \gamma_2}(x)\, dx\,dt
\end{align}
holds for any $0<\3\le\3_0$, $R\ge R_0$ and $0<t<T_0$. Letting $R\to\infty$ and $\3\to 0$ in \eqref{q-f-lambda-l1-ineqn3}, by \eqref{q-f-lambda-l1-ineqn2}, we get that \eqref{uu0a} holds for $n=4$ and any $0<t<T_0$. Since $T_0$ is arbitrary, \eqref{uu0a} holds for $n=4$  and the theorem follows.

\end{proof}

\begin{cor}\label{L1 con cor}
Let $n= 3, 4$, $\frac{n-2}{n+2}\le m<\frac{n-2}{n}$,  and $\alpha$, $\beta$, $\gamma_2$ be as given by \eqref{ab} and \eqref{gamma2-defn} respectively. Let $\lambda_1> \lambda_2>0$, $\lambda_3>0$, and $f_{\lambda_i}$ be as given by \eqref{f-lambda-defn} with $\lambda=\lambda_1,\lambda_2, \lambda_3$. Let $u_{0,1}$, $u_{0,2}$ satisfy \eqref{initial_condition_lower-upper-bd}, \eqref{u0-v0-weighted-l1}, and $u_1$, $u_2$ be the solutions of
\eqref{main-eq} with intitial values $u_{0.1}$, $u_{0.1}$, respectively, which satisfy \eqref{uvU}. Let $\4{u}_1$, $\4{u}_2$ be given by \eqref{rescaled-soln-defn} with $u=u_1, u_2,$ respectively. Then 
\begin{equation}\label{uu0b}
\int_{\R^n\setminus\{0\}} |\widetilde u_1- \widetilde u_2|(x,t) f_{\lambda_3}^{m \gamma_2} (x)\, dx 
\le \int_{\R^n\setminus\{0\}} |\4{u}_{0,1}-\4{u}_{0,2}|(x) f_{e^{-\beta t}\lambda_3}^{m \gamma_2}(x) \, dx \quad \forall t>0.
\end{equation}
\end{cor}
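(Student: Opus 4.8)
The plan is to deduce \eqref{uu0b} directly from the weighted $L^1$-contraction \eqref{uu0a} of Theorem \ref{u-v-L1-contraction2} by a rescaling, exploiting the scaling relation between $\4u_i$ and $u_i$ in \eqref{rescaled-soln-defn} together with the homogeneity of the profiles $f_\lambda$ in \eqref{f-lambda-defn}. First I would record that at $t=0$ the rescaling \eqref{rescaled-soln-defn} is the identity, so $\4u_{0,i}=u_{0,i}$; hence the right-hand side of \eqref{uu0b} equals $\int_{\R^n\setminus\{0\}}|u_{0,1}-u_{0,2}|(y)\,f_{e^{-\beta t}\lambda_3}^{m\gamma_2}(y)\,dy$, and it suffices to relate the left-hand side to the contraction for $u_1,u_2$.

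The heart of the argument is a single change of variables. Starting from the left-hand side of \eqref{uu0b}, I substitute $\4u_i(x,t)=e^{\alpha t}u_i(e^{\beta t}x,t)$ and then set $y=e^{\beta t}x$, a dilation of $\R^n\setminus\{0\}$ onto itself with $dx=e^{-n\beta t}\,dy$. This produces a prefactor $e^{\alpha t}e^{-n\beta t}$ together with the rescaled weight $f_{\lambda_3}^{m\gamma_2}(e^{-\beta t}y)$. From \eqref{f-lambda-defn} and $\alpha=2\beta/(1-m)$ one has the homogeneity identity $f_{\lambda_3}(e^{-\beta t}y)=e^{\alpha t}f_{e^{-\beta t}\lambda_3}(y)$, so the weight becomes $e^{m\gamma_2\alpha t}f_{e^{-\beta t}\lambda_3}^{m\gamma_2}(y)$. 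The definition of $\gamma_2$ in \eqref{gamma2-defn} is exactly what forces the three exponents to cancel: since $1+m\gamma_2=\tfrac{(1-m)n}{2}$, one computes $\alpha(1+m\gamma_2)=n\beta$, whence $e^{\alpha t}e^{-n\beta t}e^{m\gamma_2\alpha t}=1$. This yields
\[
\int_{\R^n\setminus\{0\}}|\4u_1-\4u_2|(x,t)\,f_{\lambda_3}^{m\gamma_2}(x)\,dx=\int_{\R^n\setminus\{0\}}|u_1-u_2|(y,t)\,f_{e^{-\beta t}\lambda_3}^{m\gamma_2}(y)\,dy.
\]

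Finally I would apply Theorem \ref{u-v-L1-contraction2} with the weight parameter $\lambda_3$ replaced by the constant $e^{-\beta t}\lambda_3$ (fixed for each $t>0$), which is admissible because that theorem allows any positive value. Before invoking it I must verify its integrability hypothesis \eqref{u0-v0-weighted-l1} for the enlarged parameter: since $\beta<0$ gives $e^{-\beta t}\lambda_3>\lambda_3$, Remark \ref{flambda12} yields $f_{e^{-\beta t}\lambda_3}\le f_{\lambda_3}$, hence $f_{e^{-\beta t}\lambda_3}^{m\gamma_2}\le f_{\lambda_3}^{m\gamma_2}$ (as $m\gamma_2>0$), so $u_{0,1}-u_{0,2}\in L^1(f_{\lambda_3}^{m\gamma_2})$ forces $u_{0,1}-u_{0,2}\in L^1(f_{e^{-\beta t}\lambda_3}^{m\gamma_2})$. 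The contraction then bounds the right-hand side of the displayed identity by $\int_{\R^n\setminus\{0\}}|u_{0,1}-u_{0,2}|(y)\,f_{e^{-\beta t}\lambda_3}^{m\gamma_2}(y)\,dy$, which by the first step equals the right-hand side of \eqref{uu0b}. I expect no serious obstacle here; the only points requiring care are checking that the exponential prefactors cancel (this is precisely where the form of $\gamma_2$ enters) and observing that the integrability hypothesis is preserved under enlarging the weight parameter.
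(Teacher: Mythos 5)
Your proof is correct and follows essentially the same route as the paper's: the same substitution $y=e^{\beta t}x$ in \eqref{rescaled-soln-defn}, the same homogeneity identity for $f_\lambda$ from \eqref{f-lambda-defn} (your exponent cancellation $\alpha(1+m\gamma_2)=n\beta$ is the paper's identity $\alpha-n\beta=-\tfrac{2\beta m\gamma_2}{1-m}$ in disguise), and the same application of Theorem \ref{u-v-L1-contraction2} with the weight parameter $e^{-\beta t}\lambda_3$. Your explicit verification that the hypothesis \eqref{u0-v0-weighted-l1} is preserved when the parameter is enlarged (via the monotonicity in Remark \ref{flambda12}, since $\beta<0$) is a detail the paper leaves implicit.
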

\begin{proof}
We first observe that by \eqref{ab} and \eqref{gamma2-defn}, 
\begin{equation*}
\alpha - n\beta= -\frac{2\beta m\gamma_2}{1-m}.
\end{equation*}
Hence by \eqref{rescaled-soln-defn}, \eqref{f-lambda-defn}, Theorem \ref{u-v-L1-contraction2} and Remark \ref{flambda12},
\begin{align*}
\int_{\R^n\setminus\{0\}} |\widetilde u_1- \widetilde u_2|(x,t) f_{\lambda_3}^{m \gamma_2}(x)\, dx=&e^{\alpha t}\int_{\R^n\setminus\{0\}} |u_1-u_2|(e^{\beta t} x,t) f_{\lambda_3}^{m \gamma_2}(x)\, dx\notag\\
=&e^{(\alpha - n\beta)t}\int_{\R^n\setminus\{0\}}|u_1-u_2|(y,t) f_{\lambda_3}^{m \gamma_2}(e^{-\beta t}y)\, dy\notag\\
\le&\int_{\R^n\setminus\{0\}} |u_1-u_2|(y,t)\left(e^{-\frac{2\beta t}{1-m}} f_{\lambda_3}(e^{-\beta t}y)\right)^{m \gamma_2}\, dy\notag\\
=&\int_{\R^n\setminus\{0\}} |u_1-u_2|(y,t) f_{e^{-\beta t}\lambda_3}^{m \gamma_2}(y)\,dy\notag\\
\le&\int_{\R^n\setminus\{0\}} | u_{0,1}(y)- u_{0,2}(y)| f_{e^{-\beta t} \lambda_3}^{m \gamma_2}(y)\,dy\quad\forall t>0
\end{align*}
and \eqref{uu0b} follows. 
\end{proof}

\begin{lem}\label{f-lambda-lambda-infty-limit}
Let $n\ge 3$, $0<m<\frac{n-2}{n}$, $\lambda>0$ and $f=f_{\lambda}$ be as given by \eqref{f-lambda-defn}. Then $f_{\lambda}$ converges uniformly to zero on $[a,\infty)$ as $\lambda\to\infty$ for any constant $a>0$.
\end{lem}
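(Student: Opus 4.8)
The plan is to read off the conclusion directly from the scaling relation \eqref{f-lambda-defn} together with the sharp decay of $f_1$ at infinity that is recorded in Remark \ref{flambda12a}. The essential structural fact that makes the statement true is that the scaling exponent $\frac{2}{1-m}-\frac{n-2}{m}$ is \emph{negative} in the regime $0<m<\frac{n-2}{n}$, i.e.\ it equals $-\gamma_1$ with $\gamma_1>0$ by \eqref{gamma1-defn}; thus raising $\lambda$ shrinks rather than magnifies $f_\lambda$ on any region bounded away from the origin.

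Concretely, I would first invoke Remark \ref{flambda12a}: since $f_1$ satisfies \eqref{gro inf} with $A=1$, there is a constant $r_1>0$ such that $\tfrac12\le r^{\frac{n-2}{m}}f_1(r)\le\tfrac32$ for all $r\ge r_1$. Fix $a>0$. For $\lambda\ge r_1/a$ and $r\ge a$ one has $\lambda r\ge\lambda a\ge r_1$, so the upper bound applies to the argument $\lambda r$, giving $f_1(\lambda r)\le\tfrac32(\lambda r)^{-\frac{n-2}{m}}$. Substituting this into \eqref{f-lambda-defn} yields
\[
f_\lambda(r)=\lambda^{\frac{2}{1-m}}f_1(\lambda r)\le\frac{3}{2}\,\lambda^{\frac{2}{1-m}-\frac{n-2}{m}}\,r^{-\frac{n-2}{m}}=\frac{3}{2}\,\lambda^{-\gamma_1}\,r^{-\frac{n-2}{m}}
\qquad\forall\,r\ge a,\ \lambda\ge r_1/a.
\]

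Next I would take the supremum over $r\ge a$. Since $r^{-\frac{n-2}{m}}\le a^{-\frac{n-2}{m}}$ for $r\ge a$, the bound above gives
\[
\sup_{r\ge a}f_\lambda(r)\le\frac{3}{2}\,a^{-\frac{n-2}{m}}\,\lambda^{-\gamma_1}\qquad\forall\,\lambda\ge r_1/a,
\]
and because $\gamma_1=\frac{n-2-nm}{m(1-m)}>0$ for $0<m<\frac{n-2}{n}$, the right-hand side tends to $0$ as $\lambda\to\infty$. This is precisely uniform convergence of $f_\lambda$ to $0$ on $[a,\infty)$, completing the argument.

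There is no genuine obstacle here: the proof is a one-line estimate once the decay rate \eqref{gro inf} is in hand, and the only point requiring attention is the sign of $\gamma_1$, which is guaranteed by the hypothesis $0<m<\frac{n-2}{n}$. The sole bookkeeping care is to choose $\lambda$ large enough (namely $\lambda\ge r_1/a$) so that the argument $\lambda r$ always lies in the regime $r\ge r_1$ where the quantitative tail bound of Remark \ref{flambda12a} is valid; this is automatic since $a>0$ is fixed and we let $\lambda\to\infty$.
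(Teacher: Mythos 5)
Your proof is correct and takes essentially the same approach as the paper: both rewrite $f_\lambda$ via the scaling relation \eqref{f-lambda-defn}, exploit the decay \eqref{gro inf} of $f_1$ at infinity (you through the quantitative tail bound of Remark \ref{flambda12a}, the paper through the limit $|\lambda x|^{\frac{n-2}{m}}f_1(\lambda x)\to 1$ itself), and conclude from the fact that the exponent $\frac{2}{1-m}-\frac{n-2}{m}=-\gamma_1$ is negative. Your version is, if anything, slightly more explicit about why the convergence is uniform on $[a,\infty)$.
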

\begin{proof}
Let $a>0$. Since $f_1$ satisfies \eqref{gro inf} with $A=1$, $|\lambda x|^{\frac{n-2}{m}}f_1(\lambda x)$ converges  to 1  as $\lambda\to\infty$ for any  $x\in\R^n\setminus\{0\}$. Hence by \eqref{f-lambda-defn},
\begin{equation*}
f_{\lambda}(x)=\lambda^{-\frac{n-2-nm}{m(1-m)}}|x|^{-\frac{n-2}{m}}|\lambda x|^{\frac{n-2}{m}}f_1(\lambda x)\to 0\quad\mbox{ uniformly  on }[a,\infty)\quad\mbox{ as }\lambda\to\infty
\end{equation*} 
and the lemma follows.
\end{proof}

We are now ready to prove Theorem \ref{thm assm}.

\begin{proof}[Proof of Theorem \ref{thm assm}]
Note that by \eqref{Ulam1u ent}, $\4{u}$ satisfies
\begin{equation}\label{lower-upper-bd-rescaled-soln2}
f_{\lambda_1} (x)\le \widetilde{u}(x,t)\le f_{\lambda_2}(x) \quad \forall x\in \re^n \setminus \{0\}, t>0.
\end{equation}
Hence  the equation \eqref{main-eq-resc} for $\4{u}$ is uniformly parabolic on $A_{2R}\times (1/2, \infty)$ for any $R>0$. By the Schauder estimates for uniformly parabolic equation \cite{LSU} $\widetilde u$ is uniformly bounded in $C^{2+\mu,1+(\mu/2)}(A_R\times (1, \infty))$  for some constant $0<\mu<1$ and any $R>0$. Let $\{t_i\}_{i=1}^{\infty}\subset (1,\infty)$ be such that $t_i \to \infty$ as $i \to \infty$ and $\4{u}_i(x,t)=\4{u}(x,t_i+t)$. Then by the Ascoli Theorem and a diagonalization argument the sequence $\{\widetilde u_i\}_{i=1}^{\infty}$ has subsequence which we may assume without loss of generality to be the sequence itself that  converges uniformly in $C^{2,1}(K)$ for any compact subset $K$ of $(\re^n\setminus\{0\})\times [0,\infty)$ to some function $\4{u}_{\infty}$ which satisfies \eqref{fde} in $(\re^n\setminus\{0\})\times [0,\infty)$ as $i \to \infty$.

Let $v_0(x)=\4{u}_{\infty}(x,0)$.
By Corollary \ref{L1 con cor},
\begin{equation}\label{ineqn100}
\int_{\R^n\setminus\{0\}} |\widetilde u(x,t)- f_{\lambda_0}(x)| f_{\lambda_3}^{m \gamma_2} (x)\, dx 
\le \int_{\R^n\setminus\{0\}} |\4{u}_0-f_{\lambda_0}(x)| f_{e^{-\beta t}\lambda_3}^{m \gamma_2}(x) \, dx \quad \forall t>0.
\end{equation}
Putting $t=t_i$ and letting $i\to\infty$ in \eqref{ineqn100},
by \eqref{u0-f0-weighted-l1}, Remark \ref{flambda12}, Lemma \ref{f-lambda-lambda-infty-limit} and the Lebesgue dominated convergence theorem,

\begin{align*}
\int_{\re^n\setminus\{0\}}|v_0(x)-f_{\lambda_0}(x)| f_{\lambda_3}^{m \gamma_2}(x) \, dx=0\quad\Rightarrow\quad v_0(x)\equiv f_{\lambda_0}(x)\quad\forall x\in \re^n\setminus\{0\}
\end{align*}
Since $t_i$ is arbitrary, $\widetilde u( \cdot, t)$ converges to $f_{\lambda_0}$ uniformly in $C^2(K)$ on every compact subset $K$ of $\re^n \setminus \{0\}$ as $t \to \infty$.
Letting $t\to\infty$ in \eqref{ineqn100}, by Remark \ref{flambda12}, Lemma \ref{f-lambda-lambda-infty-limit} and the Lebesgue dominated convergence theorem,
\begin{equation*}
\lim_{t\to\infty}\int_{\re^n\setminus\{0\}}|\widetilde {u}(x,t)-f_{\lambda_0}(x)| f_{\lambda_3}^{m \gamma_2}(x) \, dx=0.
\end{equation*}
and \eqref{u-tilde-l1-convergence} follows.
\end{proof}

\section{Asymptotic large time behaviour of radially symmetric singular solutions}
\label{section-asymptotic-behaviour-radially-symmetric-singular-soln}
\setcounter{equation}{0}
\setcounter{theorem}{0}

In this section we will study the asymptotic large time behaviour of radially symmetric solution $u$ of \eqref{main-eq} when the initial value $u_0$ is a radially symmetric function in $\R^n\setminus\{0\}$. We will use the inversion formula \eqref{u-bar-defn} to convert the problem into the study of the asymptotic large time behaviour of
the inversion function $\2{u}$.

We start by first proving a weighted $L^1$-contraction result for the inversion problem \eqref{Inversed eq}.

\begin{prop}\label{u12-bar-g-lambda-weight-l1-prop} 
Let $n$, $m$ satisfy \eqref{m-n-relation2} and  $\widetilde \alpha$, $\widetilde \beta$, $\gamma_3$ be as given by \eqref{tilde ab} and \eqref{gamma3-defn} respectively.
Let $\lambda_1> \lambda_2>0$, $\lambda_3>0$, and $g_{\lambda_i}$, $\overline U_{\lambda_i}$, $i=1,2,3$, be as given by \eqref{def g lam} and \eqref{def Ulam inv} respectively with $\lambda=\lambda_1,\lambda_2, \lambda_3$. Let $\overline u_{0,1}$, $\overline u_{0,2}$ satisfy
\begin{equation}\label{initial_condition_lower-upper-bd inv}
g_{\lambda_1} (x)\le \overline u_{0,i}(x)\le g_{\lambda_2}(x) \quad \text{ in } \re^n \setminus \{0\}\quad\forall i=1,2
\end{equation}
and
$$\overline u_{0,1}- \overline u_{0,2} \in L^1 \left (g_{\ld_3}^{m\gamma_3}; \re^n \setminus \{0\} \right).$$
Let $\overline u_1$, $\overline u_2$ be the solutions of \eqref{Inversed eq} with initial values $\overline u_{0,1}$, $\overline u_{0,2}$, respectively which satisfy
\begin{equation}\label{uvU inv1}
\overline U_{\lambda_1} \le \overline u_i\le \overline U_{\lambda_2} \quad \text{ in } (\re^n \setminus \{0\}) \times (0, \infty)\quad\forall i=1,2.
\end{equation}
Then
\begin{equation}\label{inv uu0}
\int_{\re^n \setminus \{0\}}|\overline u_1-\overline u_2|(x,t) g_{\ld_3}^{m \gamma_3}(x) \, dx 
\le \int_{\re^n \setminus \{0\}}|\overline u_{0,1}-\overline u_{0,2}|(x) g_{\ld_3}^{m \gamma_3}(x) \, dx \quad \forall t>0.
\end{equation}
\end{prop}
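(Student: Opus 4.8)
The plan is to mirror the proof of Theorem \ref{u-v-L1-contraction2} (which itself follows Lemma 4.1 of \cite{DS1} and Theorem 1.2 of \cite{HK}), now applied to the inversion equation \eqref{Inversed eq}, whose only structural novelty is the positive coefficient $P(x):=|x|^{n+2-\frac{n-2}{m}}$ multiplying the Laplacian. Setting $q=|\overline u_1-\overline u_2|$ and using $P>0$, Kato's inequality applied to \eqref{Inversed eq} gives
\begin{equation*}
q_t\le \frac{n-1}{m}\,P(x)\,\Delta\big(a(x,t)\,q\big)\quad\mbox{in }\mathscr D'((\re^n\setminus\{0\})\times(0,\infty)),
\end{equation*}
where $a(x,t)=\int_0^1 m\{s\overline u_1+(1-s)\overline u_2\}^{m-1}ds$ obeys $m\overline U_{\lambda_2}^{m-1}\le a\le m\overline U_{\lambda_1}^{m-1}$ by \eqref{uvU inv1}. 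With $\eta_{\ve,R}$ the cut-off from the proof of Theorem \ref{u-v-weight-mu-L1-contraction}, I would multiply by $\eta_{\ve,R}\,g_{\lambda_3}^{m\gamma_3}$ and integrate by parts twice, transferring both derivatives onto $P\,\eta_{\ve,R}\,g_{\lambda_3}^{m\gamma_3}$. Since $\eta_{\ve,R}$ is compactly supported in $\re^n\setminus\{0\}$ no genuine boundary terms appear, and one is left with the interior integral $\int a q\,\eta_{\ve,R}\,\Delta(P g_{\lambda_3}^{m\gamma_3})$ together with commutator terms supported in $B_{2\ve}\setminus B_\ve$ and $B_{2R}\setminus B_R$.

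The crux, replacing the condition $\Delta f_{\lambda_3}^{m\gamma_2}<0$ in Theorem \ref{u-v-L1-contraction2}, is the pointwise sign $\Delta(P\,g_{\lambda_3}^{m\gamma_3})\le 0$ in $\re^n\setminus\{0\}$. Writing $\sigma=n+2-\frac{n-2}{m}$, $G=g_{\lambda_3}^m$ and eliminating $\Delta G=-\frac{m}{n-1}r^{-\sigma}(\widetilde\alpha g_{\lambda_3}+\widetilde\beta rg_{\lambda_3,r})$ via \eqref{inv eq}, a direct computation gives
\begin{equation*}
\Delta(r^\sigma G^{\gamma_3})=\sigma(\sigma+n-2)r^{\sigma-2}G^{\gamma_3}+\gamma_3(\gamma_3-1)r^{\sigma}G^{\gamma_3-2}G_r^2+2\sigma\gamma_3 r^{\sigma-1}G^{\gamma_3-1}G_r-\frac{m\gamma_3}{n-1}G^{\gamma_3-1}(\widetilde\alpha g_{\lambda_3}+\widetilde\beta rg_{\lambda_3,r}).
\end{equation*}
Writing $\gamma_3=\frac{2-n(1-m)^2}{m(n-2-nm)}$, the range \eqref{m-n-relation2} yields $\sigma<0$, $\sigma+n-2>0$ and $0\le\gamma_3<1$ — with $\gamma_3\ge0$ equivalent to $m\ge1-\sqrt{2/n}$ and $\gamma_3<1$ to $m<\frac{n-2}{n+2}$ — so the first, second and (using \eqref{g-monotone-expression} applied to $g_{\lambda_3}$) fourth terms are $\le0$, and the only term of the wrong sign is the cross term $2\sigma\gamma_3 r^{\sigma-1}G^{\gamma_3-1}G_r$. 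I would absorb it into the first two by $2\sqrt{XY}\le\epsilon X+\epsilon^{-1}Y$ with $X=r^{\sigma-2}G^{\gamma_3}$ and $Y=r^\sigma G^{\gamma_3-2}G_r^2$; this is possible exactly when $|\sigma|\gamma_3\le(\sigma+n-2)(1-\gamma_3)$, i.e. $\gamma_3\le\frac{2nm-(n-2)}{(n-2)m}$. Verifying this last algebraic inequality throughout \eqref{m-n-relation2}, and hence the global sign of $\Delta(Pg_{\lambda_3}^{m\gamma_3})$, is the main obstacle.

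Granting the sign condition, the interior integral is discarded and it remains to send $\ve\to0$, $R\to\infty$ in the commutator terms. The inner one, over $B_{2\ve}\setminus B_\ve$, is harmless: since $m<\frac{n-2}{n+1}$ on \eqref{m-n-relation2}, Theorem \ref{g-existence-thm}(a) makes each $g_{\lambda_i}$ a $C^1$ function with $g_{\lambda_i,r}(0)=0$, so the integrand is bounded and the term is $O(\ve^{n-2})\to0$. The delicate one is the outer term, which now carries the logarithmic factors coming from the growth of $g$ at infinity; here I would bound $q\le\overline U_{\lambda_2}-\overline U_{\lambda_1}$ and use the expansion of Corollary \ref{g-expansion20} together with the difference estimate of Corollary \ref{g-lambda-12-difference-cor}. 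A power count then shows the $R$-power of this term is exactly $0$, so it behaves like $(\log R)^{(m\gamma_3+2m-1)/(1-m)}$; since the exponent $m\gamma_3+2m-1$ vanishes precisely at $m=\frac{2(n-2)}{3n}$, the strict bound $m<\frac{2(n-2)}{3n}$ in \eqref{m-n-relation2} is exactly what drives this term to $0$. Integrating the resulting differential inequality over $(0,t)$ then yields \eqref{inv uu0}; the constraint $n<8$ enters only to keep the admissible interval for $m$ in \eqref{m-n-relation2} nonempty.
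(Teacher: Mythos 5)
Your proposal has the same skeleton as the paper's proof: Kato's inequality for \eqref{Inversed eq}, testing against $\eta_{\ve,R}\,g_{\lambda_3}^{m\gamma_3}$, a pointwise sign condition on $\Delta\bigl(|x|^{n+2-\frac{n-2}{m}}g_{\lambda_3}^{m\gamma_3}\bigr)$, and cutoff estimates on the two annuli; your outer-annulus analysis (net power of $R$ equal to zero, logarithmic exponent $\frac{m\gamma_3+2m-1}{1-m}$, negativity equivalent to $m<\frac{2(n-2)}{3n}$) agrees exactly with the paper's computation \eqref{u-bar-12-difference-t-derivative}. Where you genuinely diverge is the sign lemma. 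The paper never substitutes the equation for $\Delta g_{\lambda_3}^m$: writing $h_1=g_{\lambda_3}^m$, it discards the whole block $\gamma_3(\gamma_3-1)h_1^{\gamma_3-2}|\nabla h_1|^2+\gamma_3 h_1^{\gamma_3-1}\Delta h_1$ as nonpositive, using $\gamma_3\le 1$ and the superharmonicity \eqref{gm-superharmonic}, and then kills the cross term via the monotonicity \eqref{g-monotone-expression}, $rg_{\lambda_3,r}>-\frac{\widetilde\alpha}{\widetilde\beta}\,g_{\lambda_3}$: the definition \eqref{gamma3-defn} of $\gamma_3$ is rigged precisely so that $\bigl(\frac{n-2}{m}-2n\bigr)+2m\gamma_3\frac{\widetilde\alpha}{\widetilde\beta}$ is a strictly negative constant (this is where $m<\frac{n-2}{n+2}$ enters). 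Your alternative --- eliminating $\Delta g_{\lambda_3}^m$ through \eqref{inv eq}, so that \eqref{g-monotone-expression} enters through your fourth term, and absorbing the cross term into the two negative quadratic terms by AM--GM --- is a legitimately different route to the same pointwise inequality.

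However, you leave its key step, the absorption condition $\gamma_3\le\frac{2nm-(n-2)}{(n-2)m}$, unverified and call it ``the main obstacle''; since the whole proposition rests on it, that is the one real gap in your write-up. It closes in two lines: by \eqref{gamma3-defn}, $\gamma_3=\frac{n(1-m)}{n-2-nm}-\frac{1}{m}$, while $\frac{2nm-(n-2)}{(n-2)m}=\frac{2n}{n-2}-\frac{1}{m}$, so the condition is $\frac{n(1-m)}{n-2-nm}\le\frac{2n}{n-2}$, i.e.\ $(n-2)(1-m)\le 2(n-2-nm)$, i.e.\ $(n+2)m\le n-2$ --- exactly the hypothesis $m<\frac{n-2}{n+2}$ in \eqref{m-n-relation2}, so no extra restriction appears. (Your prerequisite $\sigma+n-2>0$ for the first term, where $\sigma=n+2-\frac{n-2}{m}$, i.e.\ $m>\frac{n-2}{2n}$, also holds throughout \eqref{m-n-relation2}, because $1-\sqrt{2/n}>\frac{n-2}{2n}$ is equivalent to $(n-2)^2>0$.)

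There is also a genuine error in your inner-annulus estimate. The integrand there is not bounded: the test function carries the factor $|x|^{n+2-\frac{n-2}{m}}$, whose exponent is negative since $m<\frac{n-2}{n+2}$, so on $B_{2\ve}\setminus B_\ve$ the commutator terms are of pointwise size $\ve^{-2}\cdot\ve^{\,n+2-\frac{n-2}{m}}$, and integrating over the annulus (volume $\sim\ve^n$) gives $O\bigl(\ve^{\,2n-\frac{n-2}{m}}\bigr)$, not $O(\ve^{n-2})$. The term still tends to zero, but only because $2n>\frac{n-2}{m}$, i.e.\ $m>\frac{n-2}{2n}$ --- a condition the paper isolates explicitly at the end of its proof, and which your hypothesis $m\ge 1-\sqrt{2/n}$ guarantees by the inequality above. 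As written, your justification (boundedness of the integrand from $g_{\lambda_i}\in C^1$) overlooks the singular weight; the conclusion survives, but the reasoning must be replaced by this power count.
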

\begin{proof}
We will use a modification of the proof of Theorem 1.2 of \cite{HK} to prove this theorem.
Let $\eta$ and $\eta_{\3,R}$ be as in the proof of Theorem \ref{u-v-weight-mu-L1-contraction}
and let $R_1>0$, $R_2(\lambda)>0$, $r_3'>0$, $r_4'>0$ be as given by Corollary \ref{g-lambda-12-difference-cor}, Lemma \ref{rf'f}
and Lemma \ref{rf'f-infty-limit} respectively. Let $h_1(x)=g^m_{\lambda_3}(x)$. Since by \eqref{m-n-relation2} and \eqref{gamma3-defn} $0<\gamma_3\le 1$,
by \eqref{alpha-beta-and-tilde-relation}, \eqref{m-n-relation2}, \eqref{gamma3-defn},  \eqref{g-monotone-expression} and \eqref{gm-superharmonic},
\begin{align}\label{del rg n}
&\Delta \left(|x|^{n+2-\frac{n-2}{m}} g^{m \gamma_3}_{\lambda_3} (x)\right)\notag\\
=&h_1^{\gamma_3}(x)\La |x|^{n+2-\frac{n-2}{m}} + 2 \nabla |x|^{n+2-\frac{n-2}{m}} \cdot \nabla h_1^{\gamma_3}(x) + |x|^{n+2-\frac{n-2}{m}} \Delta h_1^{\gamma_3}(x)\notag\\
=&\left (\frac{n-2}{m}-(n+2)\right)\left(\frac{n-2}{m}-2n \right) |x|^{n-\frac{n-2}{m}}h_1^{\gamma_3}(x)
-2 \left(\frac{n-2}{m}-(n+2)\right) \gamma_3 |x|^{n+1-\frac{n-2}{m}}h_1^{\gamma_3-1}h_{1,r}\notag\\
&\qquad +\left(\gamma_3(\gamma_3-1) h_1^{\gamma_3-2}|\nabla h_1|^2+\gamma_3 h_1^{\gamma_3-1}\Delta h_1\right) |x|^{n+2-\frac{n-2}{m}}\notag\\
\le&\left(\frac{n-2}{m}-(n+2)\right)|x|^{n-\frac{n-2}{m}}h_1^{\gamma_3-1}(x) \left( \left(\frac{n-2}{m}-2n \right)h_1-2\gamma_3 r h_{1,r} \right)\notag\\
=&\left (\frac{n-2}{m}-(n+2)\right)|x|^{n-\frac{n-2}{m}}h_1^{\gamma_3-1}(x) g_{\lambda_3}^{m-1}(x) \left(\left(\frac{n-2}{m}-2n \right) g_{\lambda_3}(x)-2m\gamma_3 r g_{\lambda_3,r}(x)\right)\notag\\
<&\left (\frac{n-2}{m}-(n+2)\right)|x|^{n-\frac{n-2}{m}}h_1^{\gamma_3-1}(x) g_{\lambda_3}^{m}(x) \left(\left(\frac{n-2}{m}-2n \right)+2m\gamma_3 \left(\frac{\widetilde \alpha}{\widetilde \beta}\right) \right),\notag\\
=&-\left (\frac{n-2}{m}-(n+2)\right)\frac{(n-2-nm)}{m(1-m)}|x|^{n-\frac{n-2}{m}} h_1^{\gamma_3-1}(x) g_{\lambda_3}^{m}(x) ,\notag\\
<&0\quad\mbox{ in }\R^n\setminus\{0\},
\end{align}
where $r=|x|$.

Let $q(x,t)=|\overline u_1(x,t)-\overline u_2(x,t)|$. By Kato's inequality (\cite{DK}, \cite{K}),
\begin{equation}\label{eq-q-diff-rescaled-newn}
q_t \leq \frac{n-1}{m} |x|^{n+2- \frac{n-2}{m}} \Delta \left( a(x,t)q \right) \quad\mbox{in $\mathscr D '((\R^n\setminus\{0\})\times(0,\infty))$},
\end{equation}
where
\begin{equation}\label{eq-a-lower-bd-newn}
\begin{aligned}
m \overline U_{\ld_2}^{m-1}(x,t)\leq a(x,t):=\int_0^1 \frac{m \, ds }{ \left\{s \overline u_1+(1-s)\overline u_2\right\}^{1-m}} \leq m \overline U_{\ld_1}^{m-1}(x,t) \quad\forall x\in\R^n\setminus\{0\}, t>0.
\end{aligned}
\end{equation}

Let $T_0>0$,
\begin{equation*}
\3_1=\min\left(1/2,\lambda_3 r_4'\right)\quad\mbox{ and }\quad R_3=e^{2\4{\beta}T_0}\max (2,R_1,R_2(\lambda_1),R_2(\lambda_3), \lambda_3r_3').
\end{equation*}
Since  $g_{\lambda}$ is  continuous on $\R^n$  for any $\lambda>0$, 
 by the Kato inequality (\cite{DK}, \cite{K}), \eqref{def Ulam inv}, \eqref{uvU inv1}, \eqref{del rg n}, \eqref{eq-q-diff-rescaled-newn}, \eqref{eq-a-lower-bd-newn}, Corollary \ref{g-lambda-12-difference-cor} and Corollary \ref{rg-lambda'-g-lambda-ratio-cor}, for any $0<\3\le \3_1$, $R\ge R_3$ and $0<t<T_0$,
\begin{align}\label{u-bar-12-difference-t-derivative}
&\frac{d}{dt}\left(\int_{\re^n\setminus \{0\}}|\overline u_1-\overline u_2|(x,t)\eta_{\ve, R}(x) g_{\lambda_3}^{m \gamma_3}(x) \,dx\right)\notag\\
\le&\int_{\re^n\setminus \{0\}} |\overline u_1^m- \overline u_2^m|(x,t)\Delta\left( |x|^{n+2-\frac{n-2}{m}} \eta_{\ve, R}(x)g_{\lambda_3}^{m \gamma_3'}(x) \right) \, dx\notag\\
\le & \int_{\re^n\setminus \{0\}} |\overline u_1^m-\overline u_2^m|(x,t) \left(|x|^{n+2-\frac{n-2}{m}} g^{m \gamma_3}(x) \Delta \eta_{\ve, R}(x)+2\nabla \eta_{\ve, R} \cdot \nabla (|x|^{n+2-\frac{n-2}{m}} g_{\lambda_3}^{m \gamma_3}(x)) \right)\, dx.\notag\\
\le& C R^{n-\frac{n-2}{m}}\int_{B_{2R}\setminus B_R} \overline U_{\ld_1}^{m-1}(x,t) \left( \overline U_{\ld_2}(x,t)-\overline U_{\ld_1}(x,t) \right) g_{\lambda_3}^{m \gamma_3} (x)\, dx\notag\\
&\qquad + C \ve^{-2} \int_{B_{2\ve}\setminus B_\ve} \overline U_{\ld_2}^m(x,t)|x|^{n+2-\frac{n-2}{m}} g_{\lambda_3}^{m \gamma_3}(x) \,dx\notag \\
\le&C R^{n-\frac{n-2}{m}} e^{-m\4{\alpha}t}\int_{B_{2R}\setminus B_R} g_{\ld_1}^{m-1}(e^{-\4{\beta}t}x)\left( g_{\ld_2}(e^{-\4{\beta}t}x)-g_{\ld_1}(e^{-\4{\beta}t}x) \right)g_{\lambda_3}^{m \gamma_3} (x)\, dx\notag\\
&\qquad +C \3^{n-\frac{n-2}{m}}e^{-m\4{\alpha}t}\int_{B_{2\3}\setminus B_{\3}} g_{\ld_2}^m(e^{-\4{\beta}t}x)g_{\lambda_3}^{m \gamma_3}(x)  \, dx\notag\\
\le&C R^{n-\frac{n-2}{m}} e^{-m\4{\alpha}t}\int_{B_{2R}\setminus B_R}
\left(\frac{|e^{-\4{\beta}t}|x||^{\frac{n-2}{m}-n}}{\log (e^{-\4{\beta}t}|x|)} \right)
|e^{-\4{\beta}t}x|^{\frac{2}{1-m}-\frac{n-2}{m}}(\log (e^{-\4{\beta}t}|x|))^{\frac{m}{1-m}}(|x|^{-\frac{n-2-nm}{m}}\log |x|)^{\frac{m \gamma_3}{1-m}} \, dx\notag\\
&\qquad+C \3^{2n-\frac{n-2}{m}}e^{-m\4{\alpha}t}\notag\\
\le&CR^{n+\frac{2}{1-m}-\frac{n-2}{m}-\frac{n-2-nm}{1-m}\gamma_3}(\log R)^{\frac{m}{1-m}(1+\gamma_3)-1}+C \3^{2n-\frac{n-2}{m}}\notag\\
=&C(\log R)^{\frac{m}{1-m}(1+\gamma_3)-1}+C \3^{2n-\frac{n-2}{m}}
\end{align}
since $\4{\beta}T_0\le(1/2)\log R$. By integrating \eqref{u-bar-12-difference-t-derivative} over $(0,t)$, 

\begin{align}\label{u-bar-12-l1-difference}
&\int_{\re^n\setminus \{0\}}|\overline u_1-\overline u_2|(x,t)\eta_{\ve, R}(x) g_{\lambda_3}^{m \gamma_3}(x) \,dx\notag\\
\le &\int_{\re^n\setminus \{0\}}|\overline u_{0,1}-\overline u_{0,2}|(x)\eta_{\ve, R}(x) g_{\lambda_3}^{m \gamma_3}(x) \,dx+C(\log R)^{\frac{m}{1-m}(1+\gamma_3)-1}t+C \3^{2n-\frac{n-2}{m}}t
\end{align}
holds for any $0<\3\le \3_1$, $R\ge R_3$ and  $0<t<T_0$. Since by \eqref{m-n-relation2} and \eqref{gamma3-defn} we have
\begin{equation*}
\frac{m}{1-m}(1+\gamma_3)<1\quad\mbox{ and }\quad 2n>\frac{n-2}{m},
\end{equation*}
by letting first $\3\to 0$ and then $R\to\infty$ in \eqref{u-bar-12-l1-difference} we get that \eqref{inv uu0} holds for any $0<t<T_0$.
Since $T_0$ is artirary,  \eqref{inv uu0} holds and the proposition follows.
\end{proof}

\begin{cor}\label{u12-difference-weighted-l1-cor} 
Let $n$, $m$ satisfy \eqref{m-n-relation2} and  $\alpha$, $\beta$, $\widetilde \alpha$, $\widetilde \beta$, $\gamma_3$ be as given by \eqref{ab}, \eqref{tilde ab} and \eqref{gamma3-defn}, respectively.
Let $\lambda_1> \lambda_2>0$, $\lambda_3>0$, and $f_{\lambda_i}$, $U_{\lambda_i}, i=1,2,3$, be as given by \eqref{f-lambda-defn} and \eqref{def Ulam}, respectively, with $\lambda=\lambda_1,\lambda_2, \lambda_3$. Let $u_{0,1}$, $u_{0,2}$ be radially symmetric that satisfy \eqref{initial_condition_lower-upper-bd} and
\begin{equation}\label{u0-v0-weighted-l1 rf}
u_{0,1}-u_{0,2}\in L^1\left( |x|^{\frac{n-2}{m}+(n-2) \gamma_3-2n} f_{\lambda_3}^{m \gamma_3}(x); \re^n \setminus \{0\}\right).
\end{equation}
Let $u_1$, $u_2$ be the solutions of \eqref{main-eq} with initial values $u_{0,1}$, $u_{0,2}$, respectively which satisfy \eqref{uvU} and let $\4{u}_1$, $\4{u}_2$ be given by \eqref{rescaled-soln-defn} with $u=u_1,u_2$, respectively. Then
\begin{multline}\label{uu0a rf}
\int_{\R^n\setminus\{0\}}|u_1-u_2| (x,t)|x|^{\frac{n-2}{m}+(n-2) \gamma_3-2n}f_{\lambda_3}^{m \gamma_3}(x)\,dx\\
\le\int_{\R^n\setminus\{0\}}|u_{0,1}-u_{0,2}| (x)|x|^{\frac{n-2}{m}+(n-2) \gamma_3-2n }f_{\lambda_3}^{m \gamma_3}(x)\,dx \quad \forall t>0
\end{multline}
and
\begin{multline}\label{u-bar-difference-weight-l1}
\int_{\R^n\setminus\{0\}}|\4{u}_1-\4{u}_2| (x,t)|x|^{\frac{n-2}{m}+(n-2) \gamma_3-2n}f_{\lambda_3}^{m \gamma_3}(x)\,dx\\
\le\int_{\R^n\setminus\{0\}}|u_{0,1}-u_{0,2}| (x)|x|^{\frac{n-2}{m}+(n-2) \gamma_3-2n }f_{e^{-\beta t}\lambda_3}^{m \gamma_3}(x)\,dx \quad \forall t>0.
\end{multline}
\end{cor}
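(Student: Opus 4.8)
The plan is to reduce both estimates to the already-established weighted $L^1$-contraction for the inversion problem, Proposition~\ref{u12-bar-g-lambda-weight-l1-prop}. Since $u_{0,1}$, $u_{0,2}$ are radially symmetric, the solutions $u_1$, $u_2$ of \eqref{main-eq} satisfying \eqref{uvU} are radially symmetric by Theorem~\ref{thm exi uni u}; hence their inversions $\2{u}_i(x,t)=|x|^{-\frac{n-2}{m}}u_i(|x|^{-1},t)$ are well defined and, by \eqref{u-bar-eqn}, solve \eqref{Inversed eq} with initial data $\2{u}_{0,i}(x)=|x|^{-\frac{n-2}{m}}u_{0,i}(|x|^{-1})$. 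First I would verify that the hypotheses of Proposition~\ref{u12-bar-g-lambda-weight-l1-prop} hold for the $\2{u}_i$: the pointwise bounds \eqref{initial_condition_lower-upper-bd} and \eqref{uvU} become \eqref{initial_condition_lower-upper-bd inv} and \eqref{uvU inv1} upon multiplying by $|x|^{-\frac{n-2}{m}}$ and using \eqref{def g lam} and \eqref{U-lambda-U-bar-eqn}.

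The core of the first assertion \eqref{uu0a rf} is the inversion change of variables $y=x/|x|^2$, under which $|y|=|x|^{-1}$ and $dy=|x|^{-2n}\,dx$. Using $|\2{u}_1-\2{u}_2|(x,t)=|x|^{-\frac{n-2}{m}}|u_1-u_2|(|x|^{-1},t)$ together with $g_{\lambda_3}^{m\gamma_3}(x)=|x|^{-(n-2)\gamma_3}f_{\lambda_3}^{m\gamma_3}(|x|^{-1})$ (from \eqref{def g lam}), I would establish the identity
\begin{equation*}
\int_{\re^n\setminus\{0\}}|\2{u}_1-\2{u}_2|(x,t)\,g_{\lambda_3}^{m\gamma_3}(x)\,dx=\int_{\re^n\setminus\{0\}}|u_1-u_2|(y,t)\,|y|^{\frac{n-2}{m}+(n-2)\gamma_3-2n}f_{\lambda_3}^{m\gamma_3}(y)\,dy,
\end{equation*}
and the identical relation for the initial data. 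In particular this shows that \eqref{u0-v0-weighted-l1 rf} is equivalent to $\2{u}_{0,1}-\2{u}_{0,2}\in L^1(g_{\lambda_3}^{m\gamma_3})$, so Proposition~\ref{u12-bar-g-lambda-weight-l1-prop} applies and its conclusion \eqref{inv uu0} becomes exactly \eqref{uu0a rf} after substituting the identity on both sides.

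For the second assertion \eqref{u-bar-difference-weight-l1} I would proceed as in the proof of Corollary~\ref{L1 con cor}: writing $\4{u}_i(x,t)=e^{\alpha t}u_i(e^{\beta t}x,t)$, substituting $y=e^{\beta t}x$, and using $f_{\lambda_3}(e^{-\beta t}y)=e^{\frac{2\beta t}{1-m}}f_{e^{-\beta t}\lambda_3}(y)$ (from \eqref{f-lambda-defn}), the left side of \eqref{u-bar-difference-weight-l1} acquires the scaling factor $e^{(\alpha-n\beta-\beta E+\frac{2\beta m\gamma_3}{1-m})t}$ with $E=\frac{n-2}{m}+(n-2)\gamma_3-2n$. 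A short computation using \eqref{ab}, \eqref{gamma1-defn} and the definition \eqref{gamma3-defn} of $\gamma_3$ reduces the bracket to $n-\gamma_1-\gamma_3\frac{n-2-nm}{1-m}=0$, so the factor is $1$. Thus the left side equals $\int|u_1-u_2|(y,t)\,|y|^E f_{e^{-\beta t}\lambda_3}^{m\gamma_3}(y)\,dy$, and applying \eqref{uu0a rf} with $\lambda_3$ replaced by $e^{-\beta t}\lambda_3$---admissible because $f_{e^{-\beta t}\lambda_3}$ and $f_{\lambda_3}$ are comparable by Remark~\ref{flambda12}, so the required integrability persists---bounds it by $\int|u_{0,1}-u_{0,2}|(y)\,|y|^E f_{e^{-\beta t}\lambda_3}^{m\gamma_3}(y)\,dy$, which is \eqref{u-bar-difference-weight-l1}.

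The main obstacle is purely bookkeeping rather than analysis: ensuring that the three exponents---from the inversion Jacobian $|x|^{-2n}$, from the algebraic weights, and from the time rescaling---combine exactly, and recognizing that the vanishing of the scaling bracket is precisely what the choice \eqref{gamma3-defn} of $\gamma_3$ is engineered to produce. No new contraction inequality is needed beyond Proposition~\ref{u12-bar-g-lambda-weight-l1-prop}.
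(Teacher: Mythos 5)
Your proposal is correct and follows essentially the same route as the paper: reduce to Proposition \ref{u12-bar-g-lambda-weight-l1-prop} via the inversion $\2{u}_i(x,t)=|x|^{-\frac{n-2}{m}}u_i(|x|^{-1},t)$ (the paper performs your change of variables $y=x/|x|^2$ in polar coordinates, which is the same computation), and then obtain \eqref{u-bar-difference-weight-l1} from \eqref{uu0a rf} with $\lambda_3$ replaced by $e^{-\beta t}\lambda_3$ after checking that the scaling exponent $\alpha-n\beta-\beta\delta_3+\frac{2\beta m\gamma_3}{1-m}$ vanishes, exactly as in the paper's identity \eqref{a-b-gamma3-relation}. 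Your additional remarks---that \eqref{u0-v0-weighted-l1 rf} transfers to the inverted data and that Remark \ref{flambda12} justifies applying \eqref{uu0a rf} with the enlarged parameter $e^{-\beta t}\lambda_3$---are points the paper leaves implicit, and they are verified correctly.
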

\begin{proof}
Since $u_{0,1}$, $u_{0,2}$ are radially symmetric, by uniqueness of solution of \eqref{main-eq} both $u_1$ and $u_2$ are radially symmetric in $\R^n\setminus\{0\}$ for any $t>0$. Let $\2{u}_1$, $\2{u}_2$ be given by \eqref{u-bar-defn} with $u=u_1, u_2$, respectively. Then $\2{u}_1$, $\2{u}_2$ are the solutions of \eqref{Inversed eq} with initial values $\2{u}_{0,1}:=|x|^{-\frac{n-2}{m}} u_{0,1}(|x|^{-1})$, $u_{0,2} :=|x|^{-\frac{n-2}{m}} u_{0,2}(|x|^{-1})$, respectively. By \eqref{u-bar-defn}, \eqref{U-lambda-U-bar-eqn}, and \eqref{uvU}, we have \eqref{uvU inv1}. Hence by Proposition \ref{u12-bar-g-lambda-weight-l1-prop}  we get that \eqref{inv uu0} holds.

Therefore, by \eqref{inv uu0}, we obtain \eqref{uu0a rf}, since
\begin{align*}
& \int_{\re^n \setminus \{0\} }|\overline u_{0,1}-\overline u_{0,2}|(x) g_{\lambda_3}^{m \gamma_3}(x) \, dx\notag\\
=&\omega_n\int_0^{\infty} r^{n-1-\frac{n-2}{m}-(n-2)\gamma_3}|u_{0,1}(r^{-1})- u_{0,2}(r^{-1})|f_{\lambda_3}^{m \gamma_3}(r^{-1})\,dr\notag\\
=&\omega_n\int_0^{\infty} \rho^{\frac{n-2}{m}+(n-2)\gamma_3-n-1}|u_{0,1}(\rho)- u_{0,2}(\rho)|f_{\lambda_3}^{m \gamma_3}(\rho)\,d\rho\notag\\
=&\int_{\re^n \setminus \{0\} }|u_{0,1}(y)- u_{0,2}(y)||y|^{\frac{n-2}{m}+(n-2)\gamma_3-2n}f_{\lambda_3}^{m \gamma_3}(y)\,dy
\end{align*}
and similarly,
\begin{equation*}
\int_{\re^n \setminus \{0\} }|\overline u_1-\overline u_2|(x,t) g_{\lambda_3}^{m \gamma_3}(x) \, dx=\int_{\re^n \setminus \{0\} }|u_1- u_2|(y,t)|y|^{\frac{n-2}{m}+(n-2)\gamma_3-2n}f_{\lambda_3}^{m \gamma_3}(y)\,dy\quad\forall t>0
\end{equation*}
holds.
Let $\delta_3= \frac{n-2}{m}+(n-2) \gamma_3 -2n$. Then by  \eqref{ab} and \eqref{gamma3-defn},
\begin{equation}\label{a-b-gamma3-relation}
\alpha - n\beta-\beta \delta_3+\frac{2\beta m\gamma_3}{1-m}=\beta\left(n-\frac{2}{1-m}\right)\left(\frac{n+\frac{2}{1-m}-\frac{n-2}{m}}{n-\frac{2}{1-m}}-\gamma_3\right)= 0. 
\end{equation} 
Hence by \eqref{f-lambda-defn}, \eqref{uu0a rf} and \eqref{a-b-gamma3-relation}, 
\begin{align*}
&\int_{\R^n\setminus\{0\}} |\widetilde u_1-\widetilde u_2|(x,t) |x|^{\delta_3} f_{\lambda_3}^{m \gamma_3}(x)\, dx\notag \\ 
=&e^{\alpha t}\int_{\R^n\setminus\{0\}}|u_1-u_2|(e^{\beta t} x,t)|x|^{\delta_3} f_{\lambda_3}^{m \gamma_3}(x)\, dx\notag\\
=&e^{(\alpha - n\beta)t}\int_{\R^n\setminus\{0\}}|u_1-u_2|(y,t) (e^{-\beta t}|y|)^{\delta_3} f_{\lambda_3}^{m \gamma_3}(e^{-\beta t}y)\, dy\notag\\
=&e^{\left(\alpha - n\beta-\beta \delta_3+\frac{2\beta m\gamma_3}{1-m}\right)t} \int_{\R^n\setminus\{0\}}|u_1-u_2|(y,t)|y|^{\delta_3}  \left(e^{-\frac{2\beta t}{1-m}} f_{\lambda_3}^{m \gamma_3}(e^{-\beta t}y)\right)\, dy\notag\\
=&\int_{\R^n\setminus\{0\}}|u_1-u_2|(y,t)|y|^{\delta_3}f_{e^{-\beta t}\lambda_3}^{m\gamma_3}(y)\,dy\notag\\
\le&\int_{\R^n\setminus\{0\}}|u_{0,1}-u_{0,2}| (x)|x|^{\frac{n-2}{m}+(n-2) \gamma_3-2n }f_{e^{-\beta t}\lambda_3}^{m \gamma_3}(x)\,dx \quad \forall t>0
\end{align*}
and \eqref{u-bar-difference-weight-l1} follows.
\end{proof}

By an argument similar to the proof of Theorem \ref{thm assm} but with Corollary \ref{u12-difference-weighted-l1-cor} replacing Corollary \ref{L1 con cor} in the proof there Theorem \ref{thm assm rad sym} follows.


\end{document}